\documentclass[11pt]{article}

\usepackage{amsmath,amsthm,amscd,latexsym}
\usepackage{amsfonts,pdfsync,color,graphicx}
\usepackage[psamsfonts]{amssymb}
\usepackage{epsfig}
\usepackage{color}
\usepackage[noadjust]{cite}
\usepackage{accents}

\usepackage{listings}
\usepackage{multicol}
\usepackage{float}
\usepackage{hyperref}
\usepackage{tikz-cd}
\usepackage{subfigure}

\usepackage{listings}

\usepackage[top=1.5in,bottom=1.5in,left=1.3in,right=1.0in]{geometry}

\newtheoremstyle{mystyle}               
{}                
{}                
{}        
{}                
{\bfseries \itshape}       
{.}      
{ }      
{}       

\newtheorem{theorem}{Theorem}[section]
\newtheorem{proposition}[theorem]{Proposition}
\newtheorem{lemma}[theorem]{Lemma} 
\newtheorem{corollary}[theorem]{Corollary}
\theoremstyle{definition}
\newtheorem{definition}[theorem]{Definition}
\newtheorem{example}[theorem]{Example}	
	
\theoremstyle{mystyle}
\newtheorem{remark}[theorem]{Remark}
\numberwithin{equation}{section}
\newtheorem{notation}[theorem]{Notation}
\title{Explicit Green's Functions and Adjoint Problems for Differential Equations with Linear Functional Perturbations}
\date{ }
\author{Alberto Cabada, Paula Cambeses-Franco and Luc{\' i}a L\'opez-Somoza\\
	$^1$ CITMAga, 15782, Santiago de Compostela, Galicia, Spain\\
	$^2$ Departamento de Estatística, Análise Matemática e Optimización\\
	Facultade de Matem\'aticas, Universidade de Santiago de Com\-pos\-te\-la, Spain.\\
	alberto.cabada@usc.es; paula.cambeses.franco@usc.es; lucia.lopez.somoza@usc.es \\
	ORCID: 0000-0003-1488-935X; 0009-0005-8030-9108; 0000-0002-9167-0709}
\begin{document}
	\maketitle
	\begin{abstract}
In this paper we study a functional differential equation subject to two-point boundary value conditions, where the functional dependence is introduced through an operator of the form 
\begin{equation*}
	\sum_{k=1}^{l}\gamma_{k}(t)\mathcal{C}_{k}(u),
\end{equation*}
where  $\mathcal{C}_{k}:C(I) \rightarrow \mathbb{R}$, $k=1, \ldots l$, ($I:=[a,b]$) are linear continuous operators and $\gamma_{k} \in \mathcal{L}^{1}(I)$ for all $k=1, \ldots, l$. This formulation encompasses, among others,  equations with piecewise constant arguments and those with integral-type dependence. 

We analyze this class of equations by deducing and characterizing their Green's function, as well as by computing the related adjoint problem. This approach enables us to establish connections among different types of functional equations and to relate equations with piecewise constant arguments to impulsive differential equations and non local boundary value problems.

Next, we develop a series of comparison principles and results that allow us to characterize the regions where the Green's function of the original problem and of its adjoint maintain a constant sign. Finally, we illustrate the theoretical finding with representative examples.
	\end{abstract}

		\noindent{\bf AMS Subject Classifications:}  34B05, 34B08, 34B10, 34B15, 34B18, 34B27.

	\noindent{\bf Keywords:} Green’s function, functional equations, piecewise constant arguments, impulsive differential equations, adjoint operator, nonlocal boundary problems.

	\section*{Acknowledgements}
The authors were partially supported by Xunta de Galicia (Spain), project ED431C 2023/12. 

P.C.-F. would like to express her gratitude to the Spanish Ministry of Science, Innovation and Universities for financial support (Grant reference FPU 23/02202).

\section{Introduction}
Functional differential equations (FDEs) provide a natural framework for modeling phenomena in which the evolution of a system depends not only on its current state but also on past or distributed values. This richer structure, introduced to account for memory effects, stage-based dynamics, or control actions, extends classical differential equations by incorporating delays, integrals or arguments evaluated at specific points in time or space.

The idea of incorporating memory into differential models can be traced back to the pioneering work of Volterra in viscoelasticity and population dynamics \cite{volterra1928theorie}. His formulations introduced integral terms reflecting past influences, anticipating many concepts that would later be formalized. A major step forward in establishing a rigorous theoretical framework came with the seminal work of Hale \cite{hale1977theory}, which laid the groundwork for the modern analysis of delay and functional-type equations. This theory was further expanded by Kolmanovskii and Myshkis \cite{kolmanovskii1992applied}, who emphasized its applicability in control systems and mechanics, combining theoretical developments with practical perspectives.

A noteworthy subclass of FDEs is formed by equations with piecewise constant arguments (EPCA), which are particularly useful for modeling systems where changes occur at discrete time intervals, such as sampled-data systems, biological populations with stages or switching circuits. Cooke and Wiener \cite{cooke1984retarded} conducted a foundational systematic study of EPCA, offering key insights into their qualitative behavior. Bainov and Simeonov \cite{bainov1989systems} later developed this theory further emphasizing impulsive and hybrid dynamics with piecewise structure. More recently, this framework has been extended to boundary value problems through the use of Green's functions by Nieto and Rodríguez-López \cite{nieto2005green}, Cabada, Ferreiro, and Nieto \cite{cabada2004green} and Buedo-Fernández, Cao and Rodríguez-López \cite{buedo2024boundary}, significantly advancing the analytical treatment of such systems.

Equations where the functional dependence appears under an integral, common in viscoelasticity, epidemiology and optimal control, represent another important class of FDEs. These models capture distributed delays or memory effects, often leading to complex qualitative behavior.

On the other hand, another important class of differential equations is that of impulsive differential equations. In real-world applications, these are essential for modeling abrupt changes occurring over negligible time intervals, making them particularly useful in fields such as biotechnology, pharmacology, and neural networks \cite{lichun2002impulsive, li2007robust}. Their origins can be traced back to early works such as those by Milman and Mishkis \cite{milman1960stability} and Bainov and Simeonov \cite{bainov1989systems}. Nowadays, they continue to play a significant role in the literature, as evidenced by studies such as \cite{domoshnitsky2014sign, nieto2002periodic, cabada2000green}.

Finally, in this paper we also analyze differential equations with nonlocal boundary conditions. They are particularly valuable for modeling systems in which the boundary behavior depends on global information or on values at interior points of the domain, as occurs, for instance, in thermal control systems or population dynamics. This constitutes an active and important area of current research, with notable contributions such as \cite{domoshnitsky2014sign, nieto2002periodic, cabada2000green}.

Although many previous studies employ iterative schemes or fixed-point theorems to analyze functional and impulsive differential equations, our focus here is on the use of Green's functions as a central analytical tool. Green's functions play a crucial role in this context, as they provide explicit representations of solutions to boundary value problems and offer valuable insights into existence, uniqueness, and qualitative behavior. Moreover, identifying regions where the Green's function maintains a constant sign facilitates the application of techniques such as the lower and upper solutions method, fixed-point theorems or the monotone method.

This work is the first, to the best of our knowledge, in which differential equations with piecewise constant dependence are related to impulsive differential equations, as well as to differential equations with non local boundary conditions, making it possible to obtain and connect the Green's functions for all of them.

To this end, we begin considering differential equations perturbed by functional dependence of the form 
\begin{equation*}
\sum_{k=1}^{l}\gamma_{k}(t)\mathcal{C}_{k}(u)
\end{equation*}
where  $\mathcal{C}_{k}:C(I) \rightarrow \mathbb{R}$, $k=1, \ldots l$ ($I:=[a,b]$) are linear continuous operators and $\gamma_{k} \in \mathcal{L}^{1}(I)$ for all $k=1, \ldots, l$. This formulation encompasses, among others, integral functional dependence and equations with piecewise constant arguments. 
We will deduce the expression of the Green's function and compute the related adjoint problem, thereby establishing connections among different types of functional equations and obtaining links between equations with piecewise constant arguments, impulsive differential equations, and non local boundary value problems. On this basis, we will characterize the corresponding Green's functions and develop comparison principles that provide sufficient conditions to guarantee regions where the Green's function for the original problem and for its adjoint, which are the same, maintain a constant sign. These results can then be applied to ensure the existence of solutions to related nonlinear problems by invoking fixed point theorems, such as the lower and upper solutions method and Krasnoselskii's fixed point theorem.

\section{Problem Statement}
We will consider a differential equation with functional dependence of the following form
\begin{equation}
\left\{
\begin{aligned}
	L_n u(t)+\sum_{k=1}^{l}\gamma_{k}(t)\mathcal{C}_{k}(u) &= \sigma(t), \quad && t \in I := [a, b], \\
	V_i(u) &= 0, \quad && i = 1, \ldots, n,
\end{aligned}
\right.
\label{proprin}
\end{equation}
along with the two-point boundary conditions
\begin{equation}
		V_i(u) = \sum_{j=0}^{n-1}{\left(\alpha^{i}_{j}u^{(j)}(a)+\beta^{i}_{j}u^{(j)}(b) \right)}, \quad i = 1, \ldots, n,
		\label{defv}
\end{equation}
where
\begin{equation}
	L_{n}u(t) \equiv u^{(n)}(t)+a_{1}(t)u^{(n-1)}(t)+\cdots+a_{n-1}(t)u'(t)+a_{n}(t)u(t), \, t \in I,
	\label{defl}
\end{equation}
being $\alpha^{i}_{j}$ and $\beta^{i}_{j}$ real constants for all $i=1, \ldots, n$ and $j=0, \ldots, n-1$, and $\sigma$, $a_{k} \in \mathcal{L}^{1}(I)$ for all $k=1, \ldots, n$.

Moreover, $\mathcal{C}_{k}:C(I) \rightarrow \mathbb{R}$, $k=1, \ldots l$, are linear continuous operators and $\gamma_{k} \in \mathcal{L}^{1}(I)$ for all $k=1, \ldots, l$.

\begin{notation}
	In what follows, $\mathcal{C}_j(u)$ denotes quantities that may depend on $u$ and its derivatives up to order $n-1$. For simplicity, we use this notation throughout.
\end{notation}

Our objective is to determine the Green's function of the Problem \eqref{proprin}. To that end, we begin by considering the following related problem, disregarding the functional operator component:
\begin{equation}
	\left\{
	\begin{aligned}
		L_n u(t) &= \sigma(t), \quad && t \in I, \\
		V_i(u) &= 0, \quad && i = 1, \ldots, n,
	\end{aligned}
	\right.
	\label{partir}
\end{equation}
where $V_{i}$ and $L_{n}$ are defined in \eqref{defv} and \eqref{defl}, respectively. 
%

\section{Explicit Expression of the Green's function of Problem \eqref{proprin}}

This section focuses on obtaining the explicit form of the solution to the general Problem \eqref{proprin}. 
We will work under the assumption that Problem \eqref{partir} has a unique Green's function denoted by $G$, or, which is equivalent, the corresponding homogeneous Problem of $\eqref{partir}$, that is,
\begin{equation}
	\left\{
	\begin{aligned}
		L_n u(t) &= 0, \quad && t \in I, \\
		V_i(u) &= 0, \quad && i = 1, \ldots, n,
	\end{aligned}
	\right.
	\label{homogeneo}
\end{equation}
has only the trivial solution.
 Taking this into account, and by the definition of the Green's function, the solution to Problem \eqref{proprin} satisfies:

\begin{equation*}
	u(t) = \int_{a}^{b} G(t,s) \left(\sigma(s) - \sum_{k=1}^{l} \gamma_{k}(s) \mathcal{C}_{k}(u) \right) \, ds.
\end{equation*}

In the next result, assuming appropriate conditions on the spectrum of the Problem \eqref{partir}, we establish the existence and uniqueness of the solution to Problem \eqref{proprin}. In addition, we obtain the corresponding expression for its related Green's function. Our approach uses results similar to those in \cite{cabada2025reflection}.

\begin{theorem}
\label{teoprincipal}
Assume that Problem \eqref{partir} has $G$ as its unique Green's function and let $\sigma \in \mathcal{L}^{1}(I)$, $\mathcal{C}_{k}: C(I) \rightarrow \mathbb{R}$ and $\gamma_{k} \in \mathcal{L}^{1}(I)$, $k=1, \ldots, l$ be such that
\begin{equation}
	\det{(A+I_{l})} \neq 0,
	\label{condiciondet}
\end{equation}
with $I_{l}$ the identity matrix of order $l$ and $A=(a_{ij})_{l \times l}$ given by
\begin{equation}
	a_{i,j} \equiv \int_{a}^{b}{\mathcal{C}_{i}(G(\cdot,s))\gamma_{j}(s) \mathrm{d}s}, \quad i, j \in \{1, \ldots, l\}.
	\label{defa}
\end{equation}
Then, Problem \eqref{proprin} has a unique solution $u \in W^{n,1}(I)\hookrightarrow C^{n-1}(I)$, given by 
\begin{equation*}
	u(t)=\int_{a}^{b}{H(t,s) \sigma(s)\mathrm{d}s},
	\end{equation*}
where 
\begin{equation}
	H(t,s)=G(t,s)-\sum_{i=1}^{l}\int_{a}^{b}G(t,r) \gamma_{i}(r)\mathrm{d}r \sum_{j=1}^{l} \tilde{a}_{i,j}\mathcal{C}_{j}(G(\cdot,s))
	\label{ecprincipal}
\end{equation}
with $\tilde{A}=(\tilde{a}_{ij})_{l \times l}$ denoting the inverse of matrix $A+I_l$.

 We say that $H$ is the Green's function related to Problem \eqref{proprin}.
\end{theorem}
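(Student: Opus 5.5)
The plan is to reduce Problem \eqref{proprin} to a finite-dimensional linear system for the $l$ unknown scalars $c_k:=\mathcal{C}_k(u)$. Since $\sigma-\sum_k\gamma_k c_k\in\mathcal{L}^1(I)$ for constants $c_k$, and $G$ is the unique Green's function of \eqref{partir}, a function $u\in W^{n,1}(I)$ solves \eqref{proprin} if and only if
\begin{equation*}
u(t)=\int_a^b G(t,s)\sigma(s)\,ds-\sum_{k=1}^l c_k\int_a^b G(t,r)\gamma_k(r)\,dr,\qquad c_k=\mathcal{C}_k(u).
\end{equation*}
Indeed, if $u$ solves \eqref{proprin}, the numbers $\mathcal{C}_k(u)$ are fixed constants, so $u$ solves \eqref{partir} with right-hand side $\sigma-\sum_k\gamma_k\mathcal{C}_k(u)$ and uniqueness gives the representation. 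The converse follows directly from the defining property of $G$.

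Next I apply $\mathcal{C}_i$ to this identity. The key technical point is the interchange
\begin{equation*}
\mathcal{C}_i\Big(\int_a^b G(\cdot,s)f(s)\,ds\Big)=\int_a^b \mathcal{C}_i(G(\cdot,s))f(s)\,ds,\qquad f\in\mathcal{L}^1(I).
\end{equation*}
I expect this to be the main obstacle. It should follow from linearity and continuity of $\mathcal{C}_i$ on $C(I)$ (or $C^{n-1}(I)$, per the Notation), together with the regularity of $G$. The map $s\mapsto G(\cdot,s)\in C^{n-2}(I)$ is continuous and bounded, with jumps only in $\partial_t^{n-1}G$ on the diagonal. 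I will approximate $f$ by step functions, so that the integral becomes a Riemann/Bochner-type limit of finite sums, and pass $\mathcal{C}_i$ inside by continuity. Equivalently, I can represent $\mathcal{C}_i$ via Riesz (a measure acting on $u$ and its derivatives) and apply Fubini. This is the approach I would cite from \cite{cabada2025reflection}.

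With the interchange in hand, setting $b_i:=\int_a^b\mathcal{C}_i(G(\cdot,s))\sigma(s)\,ds$ the system becomes
\begin{equation*}
c_i+\sum_{j=1}^l a_{i,j}c_j=b_i,\quad i=1,\dots,l,
\end{equation*}
that is, $(A+I_l)c=b$. By \eqref{condiciondet} this system has the unique solution $c=\tilde A b$. Hence the constants $c_k$ are uniquely determined, which gives uniqueness of $u$. For existence, I define $u$ by the formula above with $c=\tilde A b$. Then $u\in W^{n,1}(I)$, and applying $\mathcal{C}_i$ shows $\mathcal{C}_i(u)=b_i-(Ac)_i=c_i$, so $u$ solves \eqref{proprin}.

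Finally, I substitute $c_k=\sum_j\tilde a_{k,j}\int_a^b\mathcal{C}_j(G(\cdot,s))\sigma(s)\,ds$ into the representation and use Fubini, which is legitimate since all kernels are bounded and $\sigma,\gamma_k\in\mathcal{L}^1$. This gives $u(t)=\int_a^b H(t,s)\sigma(s)\,ds$ with $H$ exactly as in \eqref{ecprincipal}.
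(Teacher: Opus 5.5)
Your proposal is correct and follows the paper's argument. You write $u$ through $G$ with the unknown constants $c_k=\mathcal{C}_k(u)$, apply each $\mathcal{C}_j$ to get $(A+I_l)c=b$, invert, and substitute back to read off $H$. It is slightly more complete than the paper's proof: you explicitly check existence (that the $u$ built from $c=\tilde A b$ satisfies $\mathcal{C}_i(u)=c_i$) and you address the interchange of $\mathcal{C}_i$ with the $s$-integral, which the paper uses silently. Of your two suggested justifications, keep the measure representation plus Fubini. The Riemann-sum/Bochner route needs $s\mapsto G(\cdot,s)$ to be continuous into the domain of $\mathcal{C}_i$, which fails for $n=1$ and for functionals involving $u^{(n-1)}$.
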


\begin{proof}
Since $G$ is the unique Green's function of Problem \eqref{partir}, the solution to Problem \eqref{proprin} satisfies the following expression:
\begin{equation}
\begin{aligned}
	u(t)&=\int_{a}^{b}{G(t,s) \left(\sigma(s)- \sum_{i=1}^{l}{\gamma_{i}(s) \mathcal{C}_{i}(u)} \right) \mathrm{d}s}\\
	&=\int_{a}^{b}{G(t,s) \sigma(s) \mathrm{d}s}-\sum_{i=1}^{l} \mathcal{C}_{i}(u)\int_{a}^{b}{G(t,s) \gamma_{i}(s) }\mathrm{d}s.
	\label{primdemo}
\end{aligned}
\end{equation}
Next, we apply the linear continuous operators $\mathcal{C}_{j}$ on both sides of equation \eqref{primdemo}, which leads to
\begin{equation*}
\mathcal{C}_{j}(u)=\int_{a}^{b}{\mathcal{C}_{j}(G(\cdot,s))\sigma(s) \mathrm{d}s}-\sum_{i=1}^{l}{\mathcal{C}_{i}(u)\int_{a}^{b}{\mathcal{C}_{j}(G(\cdot,s)) \gamma_{i}(s)} \mathrm{d}s}, \quad j=1, \ldots, l.
\end{equation*}
Therefore, we have that
\begin{equation*}
	\mathcal{C}_{j}(u)+\sum_{i=1}^{l}{\mathcal{C}_{i}(u)\int_{a}^{b}{\mathcal{C}_{j}(G(\cdot,s)) \gamma_{i}(s) } \mathrm{d}s}=\int_{a}^{b}{\mathcal{C}_{j}(G(\cdot,s))\sigma(s) \mathrm{d}s}, \quad j=1, \ldots, l.
\end{equation*}
Hence, we conclude that
\begin{equation}
	\label{sistema}
	\left( A+I_{l} \right)
	\begin{pmatrix}
		\mathcal{C}_1(u) \\
		\mathcal{C}_2(u) \\
		\vdots \\
		\mathcal{C}_l(u)
	\end{pmatrix}
	=
	\begin{pmatrix}
		\int_{a}^{b}{\mathcal{C}_{1}(G(\cdot,s)) \sigma(s) \mathrm{d}s} \\
		\int_{a}^{b}{\mathcal{C}_{2}(G(\cdot,s)) \sigma(s) \mathrm{d}s}  \\
		\vdots \\
		\int_{a}^{b}{\mathcal{C}_{l}(G(\cdot,s)) \sigma(s) \mathrm{d}s} 
	\end{pmatrix},
\end{equation}
where $I_{l}$ is the identity matrix of order $l$ and $A$ is given by equation \eqref{defa}.
From the previous equality, we obtain that
\begin{equation*}
	\mathcal{C}_{i}(u)=\sum_{j=1}^{l}{\tilde{a}_{ij}\int_{a}^{b}{\mathcal{C}_{j}(G(\cdot,s)) \sigma(s)} \mathrm{d}s}, \quad i=1, \ldots, n.
\end{equation*}
Substituting this expression into equation \eqref{primdemo}, we deduce that
\begin{equation*}
	u(t)=\int_{a}^{b}{G(t,s) \sigma(s) \mathrm{d}s}-\sum_{i=1}^{l} \sum_{j=1}^{l}{\tilde{a}_{ij} \int_{a}^{b}{\mathcal{C}_{j}(G(\cdot,r)) \sigma(r) \mathrm{d}r} \int_{a}^{b}{G(t,s) \gamma_{i}(s)\mathrm{d}s}}.
\end{equation*}
Subsequently, taking into account that
\begin{equation*}
	\int_{a}^{b} \int_{a}^{b} \mathcal{C}_{j}(G(\cdot,s)) \, \sigma(s) \, G(t,r) \, \gamma_{i}(r) \, \mathrm{d}r \, \mathrm{d}s 
	= 
	\int_{a}^{b} \mathcal{C}_{j}(G(\cdot,r)) \left( \int_{a}^{b} G(t,s) \, \gamma_{i}(s) \, \mathrm{d}s \right) \sigma(r) \, \mathrm{d}r
\end{equation*}
we arrive at the following expression
\begin{equation*}
	u(t) = \int_{a}^{b} \left( G(t,s) - \sum_{i=1}^{l} \sum_{j=1}^{l} \tilde{a}_{ij} \, \mathcal{C}_{j}(G(\cdot,s)) \int_{a}^{b} G(t,r) \, \gamma_{i}(r) \, \mathrm{d}r \right) \sigma(s) \, \mathrm{d}s.
\end{equation*}
Finally, we conclude that
\begin{equation*}
	u(t)=\int_{a}^{b}{H(t,s) \sigma(s)},
\end{equation*}
where $H$ given by \eqref{ecprincipal} is the unique Green's function of Problem \eqref{proprin}.

We now show that this solution is unique. Suppose that there exists another solution $v \neq u$ to equation \eqref{proprin}, and set $w=u-v$. Then, $w$ satisfies
\begin{equation*}
	\left\{
	\begin{aligned}
		L_n w(t)+\sum_{k=1}^{l}\gamma_{k}(t)\mathcal{C}_{k}(w) &= 0, \quad && t \in I, \\
		V_i(w) &= 0, \quad && i = 1, \ldots, n.
	\end{aligned}
	\right.
\end{equation*}
From \eqref{sistema}, we have that
\begin{equation*}
	\left( A+I_{l} \right)
	\begin{pmatrix}
		\mathcal{C}_1(w) \\
		\mathcal{C}_2(w) \\
		\vdots \\
		\mathcal{C}_l(w)
	\end{pmatrix}
	=
	\begin{pmatrix}
		0 \\
		0  \\
		\vdots \\
		0, 
	\end{pmatrix},
\end{equation*}
from which we deduce that $\mathcal{C}_j(w)=0$ for $j=1, \ldots, n$. Finally, taking into account \eqref{primdemo} and the fact that $\sigma=0$, we conclude that $w=0$, and hence $u=v$. 
\end{proof}
In the particular case where $l=1$ we can deduce the following corollary.

\begin{corollary}
\label{casosimple}
Assume that Problem \eqref{partir} has $G$ as its unique Green's function and let $\sigma \in \mathcal{L}^{1}(I)$, $\mathcal{C}: C(I) \rightarrow \mathbb{R}$, $\gamma \in \mathcal{L}^{1}(I)$ be such that
$$1+\int_{a}^{b}{\mathcal{C}(G(\cdot,s)) \gamma(s)} \mathrm{d}s \neq 0.$$
Then, Problem 
	\begin{equation}
	\left\{
	\begin{aligned}
		L_n[M]u(t)+\gamma(t)\mathcal{C}(u) &= \sigma(t), \quad && t \in I := [a, b], \\
		V_i(u) &= 0, \quad && i = 1, \ldots, n,
	\end{aligned}
	\right.
	\label{propart}
\end{equation}	where $V_{i}$ and $L_{n}$ are defined in \eqref{defv} and \eqref{defl}, respectively, has a unique solution $u$, given by
\begin{equation*}
	u(t)=\int_{a}^{b}{H(t,s) \sigma(s) \mathrm{d}s},
\end{equation*}
where
\begin{equation}
	H(t,s)=G(t,s)-\mathcal{C}(G(\cdot,s)) \frac{\int_{a}^{b}{G(t,r)\gamma(r)}\mathrm{d}r}{1+\int_{a}^{b}{\mathcal{C}(G(\cdot,r)) \gamma(r) \mathrm{d}r}}.
	\label{hsimple}
\end{equation}
\end{corollary}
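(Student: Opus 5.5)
This is the case $l=1$ of Theorem~\ref{teoprincipal}, so the plan is to specialize that theorem rather than reprove anything. Take $\gamma_1=\gamma$ and $\mathcal{C}_1=\mathcal{C}$. Then Problem \eqref{propart} is exactly Problem \eqref{proprin} with a single functional term; we read $L_n[M]$ as the operator $L_n$ of \eqref{defl}. The matrix $A$ in \eqref{defa} becomes the $1\times 1$ matrix
\[
a_{1,1}=\int_a^b \mathcal{C}(G(\cdot,s))\gamma(s)\,\mathrm{d}s .
\]
Hence
\[
\det(A+I_1)=1+\int_a^b \mathcal{C}(G(\cdot,s))\gamma(s)\,\mathrm{d}s ,
\]
and the hypothesis of the corollary is precisely condition \eqref{condiciondet}.

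Theorem~\ref{teoprincipal} therefore applies. It gives existence and uniqueness of a solution $u\in W^{n,1}(I)$ of \eqref{propart}, together with the representation $u(t)=\int_a^b H(t,s)\sigma(s)\,\mathrm{d}s$, where $H$ is given by \eqref{ecprincipal}. It remains to simplify \eqref{ecprincipal} when $l=1$. The inverse of the $1\times1$ matrix $A+I_1$ is the scalar
\[
\tilde a_{1,1}=\Bigl(1+\int_a^b \mathcal{C}(G(\cdot,r))\gamma(r)\,\mathrm{d}r\Bigr)^{-1},
\]
and both double sums in \eqref{ecprincipal} reduce to the single term $i=j=1$. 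Substituting gives
\[
H(t,s)=G(t,s)-\int_a^b G(t,r)\gamma(r)\,\mathrm{d}r\;\tilde a_{1,1}\;\mathcal{C}(G(\cdot,s)),
\]
which is exactly \eqref{hsimple}.

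There is no genuine obstacle here. The only points needing care are bookkeeping ones: the integration variable is renamed from $s$ to $r$ in the denominator so it is not confused with the free variable $s$ in $\mathcal{C}(G(\cdot,s))$, and $\mathcal{C}(G(\cdot,s))$ must make sense for a.e.\ $s$ and be integrable against $\gamma$. Both points are already implicit in the statement of Theorem~\ref{teoprincipal}, so they are inherited without further work.
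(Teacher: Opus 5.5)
Your proof is correct and is essentially the paper's argument: the paper's proof re-runs the $l=1$ case of the proof of Theorem~\ref{teoprincipal}. It applies $\mathcal{C}$ to $u(t)=\int_a^b G(t,s)(\sigma(s)-\gamma(s)\mathcal{C}(u))\,\mathrm{d}s$, solves the resulting scalar equation for $\mathcal{C}(u)$, and substitutes back, while you invoke the theorem directly with $A+I_1$ equal to the scalar $1+\int_a^b \mathcal{C}(G(\cdot,s))\gamma(s)\,\mathrm{d}s$. Your specialization also inherits the uniqueness claim from the theorem, which the paper's direct computation does not re-verify explicitly.
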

\begin{proof}
	In this case, when we can ensure the uniqueness of the Green's function $G$ of Problem \eqref{partir}, we have that
	\begin{equation*}
		u(t)=\int_{a}^{b}(G(t,s) \left(\sigma(s)-\gamma(s) \mathcal{C}(u) \right)) \mathrm{d}s
	\end{equation*}
	and whenever $1+\int_{a}^{b}{\mathcal{C}(G(\cdot,s)) \gamma(s)} \mathrm{d}s \neq 0$, we obtain that
	\begin{equation*}
		\mathcal{C}(u)=\frac{\int_{a}^{b}{\mathcal{C}(G(\cdot,s)) \sigma(s)} \mathrm{d}s}{1+\int_{a}^{b}{\mathcal{C}(G(\cdot,s)) \gamma(s)}\mathrm{d}s},
	\end{equation*}
	from which we deduce that
	\begin{equation*}
		u(t)=\int_{a}^{b}{\left(G(t,s)-\mathcal{C}(G(\cdot,s)) \frac{\int_{a}^{b}{G(t,r)\gamma(r)}\mathrm{d}r}{1+\int_{a}^{b}{\mathcal{C}(G(\cdot,r)) \gamma(r) \mathrm{d}r}} \right)\sigma(s) \mathrm{d}s}.
	\end{equation*}
	Therefore, the Green's function of Problem \eqref{propart} is given by \eqref{hsimple}.
\end{proof}
Let
\begin{equation*}
	D=\{t \in I: \gamma_i(t) \textup{ is discontinuous for some }i \in \{1, \ldots, l\}\}.
\end{equation*}
We state the following proposition (deduced from \cite[Definition 1.4.1]{cabada2014greens}). 
\begin{proposition}
	\label{caracterizarh}
	The Green's function $H$ of Problem \eqref{proprin} satisfies the following properties:
	\begin{itemize}
		\item For each $s \in I$, $H(\cdot,s)$ is well-defined and $C^{n-2}$ for all $t \in I$, of class $C^{n-1}$ for all $t \in I$, $t \neq s$, and of class $C^n$ for all $t \in I$, $t \neq s$, $t \notin D$.
		\item For each $s \in \mathring{I}$, the function $H(\cdot,s)$ is the solution of the following differential equation:
		\begin{equation*}
			L_n H(\cdot,s)+\sum_{k=1}^{l} \gamma_k(t)\mathcal{C}_k(H(\cdot,s))=0,
		\end{equation*}
		for all $t \in I$, $t \neq s$, $t \notin D$.
		\item For each $t \in \mathring{I}$, $t \notin D$, the lateral limits
		\begin{equation*}
			\frac{\partial^{n-1}}{\partial{t}^{n-1}}H(t^-,t)=\frac{\partial^{n-1}}{\partial t^{n-1}}H(t,t^+) \textup{ and }\frac{\partial^{n-1}}{\partial t^{n-1}}H(t,t^-)=\frac{\partial^{n-1}}{\partial t^{n-1}}H(t^+,t),
		\end{equation*}
		exist and are real. Furthermore,
		\begin{equation*}
			\frac{\partial^{n-1}}{\partial t^{n-1}}H(t^+,t)-\frac{\partial^{n-1}}{\partial t^{n-1}}H(t^-,t)=\frac{\partial^{n-1}}{\partial t^{n-1}}H(t,t^-)-\frac{\partial^{n-1}}{\partial t^{n-1}}H(t,t^+)=1.
		\end{equation*}
		\item For each $s \in \mathring{I}$, the function $H(\cdot,s)$ satisfies the boundary conditions
		\begin{equation*}
			V_i(H(\cdot,s))=0, \quad i=1, \ldots, n.
		\end{equation*}
	\end{itemize}
	\end{proposition}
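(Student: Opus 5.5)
The plan is to read everything off the explicit formula \eqref{ecprincipal} from Theorem \ref{teoprincipal}. Fix $s$ and write
\[
H(t,s)=G(t,s)-\sum_{i=1}^{l}w_i(t)\,c_i(s),
\]
where
\[
w_i(t):=\int_a^b G(t,r)\gamma_i(r)\,\mathrm{d}r,\qquad c_i(s):=\sum_{j=1}^{l}\tilde a_{ij}\,\mathcal{C}_j(G(\cdot,s)).
\]
Each $c_i(s)$ is a real number. The properties of $G$ are taken from \cite[Definition 1.4.1]{cabada2014greens}, since $G$ is the Green's function of \eqref{partir}. The properties of $w_i$ come from the fact that $w_i$ is the unique solution of \eqref{partir} with right-hand side $\gamma_i\in\mathcal{L}^1(I)$. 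Hence $w_i\in W^{n,1}(I)\subset C^{n-1}(I)$. Moreover, $w_i^{(n)}=\gamma_i-\sum_k a_k w_i^{(n-k)}$ a.e., and $w_i$ is $C^n$ at every point where $\gamma_i$ (and the coefficients) are continuous, i.e. for $t\notin D$.

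\textbf{Regularity and jump condition.} The first bullet then follows at once. The correction term $\sum_i w_i c_i(s)$ is $C^{n-1}$ on all of $I$ and $C^n$ off $D$. Therefore $H(\cdot,s)$ inherits exactly the regularity of $G(\cdot,s)$ ($C^{n-2}$ everywhere, $C^{n-1}$ for $t\neq s$), except that $C^n$ regularity is lost on $D$. The third bullet follows the same way. The correction term has a continuous $(n-1)$-th derivative in $t$, so it contributes no jump at $t=s$, and the lateral limits and the unit jump of $\partial_t^{n-1}H$ are those of $G$. For the limits taken in the $s$ variable, such as $H(t,t^{\pm})$, I would also need $s\mapsto c_i(s)$ to have lateral limits. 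I would obtain this by applying the continuous functionals $\mathcal{C}_j$ to $G(\cdot,s)$ and using that $s\mapsto G(\cdot,s)$ is continuous into $C(I)$ (or into $C^{n-1}$, where the functionals may act on derivatives), at least piecewise.

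\textbf{Boundary conditions.} The fourth bullet is immediate by linearity of each $V_i$, because $V_i(G(\cdot,s))=0$ and $V_i(w_k)=0$.

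\textbf{Differential equation.} For $t\neq s$, $t\notin D$, we have $L_nG(\cdot,s)=0$ and $L_nw_i=\gamma_i$. Hence
\[
L_nH(t,s)=-\sum_i\gamma_i(t)\,c_i(s).
\]
It therefore suffices to prove $\mathcal{C}_k(H(\cdot,s))=c_k(s)$ for every $k$. Applying $\mathcal{C}_k$ to the formula for $H$, and using that $\mathcal{C}_k$ commutes with the integral, gives
\[
\mathcal{C}_k(w_i)=\int_a^b\mathcal{C}_k(G(\cdot,r))\gamma_i(r)\,\mathrm{d}r=a_{ki}.
\]
With $b_j(s):=\mathcal{C}_j(G(\cdot,s))$, so that $c=\tilde A b$, this yields
\[
\mathcal{C}_k(H(\cdot,s))=b_k-(A\tilde A b)_k=\big((I_l-A\tilde A)b\big)_k.
\]
Since $\tilde A=(A+I_l)^{-1}$, we have $I_l-A\tilde A=\tilde A$. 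So $\mathcal{C}_k(H(\cdot,s))=c_k(s)$, and the equation holds.

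\textbf{Main obstacle.} The delicate point is justifying the interchange of $\mathcal{C}_k$ with the Bochner-type integral $\int_a^b G(\cdot,r)\gamma_i(r)\,\mathrm{d}r$. I would handle it by showing that $r\mapsto G(\cdot,r)\gamma_i(r)$ is Bochner integrable in $C(I)$ (respectively $C^{n-1}(I)$). This holds because $r\mapsto G(\cdot,r)$ is bounded and piecewise continuous into that space, and continuous linear functionals commute with Bochner integrals. The same interchange was already used implicitly in the proof of Theorem \ref{teoprincipal}. The lateral-limit statement in $s$ is a secondary issue and relies on the same continuity.
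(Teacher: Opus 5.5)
The paper gives no argument for this proposition beyond saying it is deduced from Cabada's Definition 1.4.1. In effect it transcribes the defining properties of a classical Green's function to the functional operator. That presents $H$ in the familiar form that characterizes a Green's function independently of any formula, but it verifies nothing.

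Your route is genuinely different and is an actual proof. You fix $s$, write $H(\cdot,s)=G(\cdot,s)-\sum_i c_i(s)w_i$ from \eqref{ecprincipal}, and transfer the known properties of $G$ and of the solutions $w_i$ of \eqref{partir} with data $\gamma_i$. Its main payoff is the self-consistency identity $\mathcal{C}_k(H(\cdot,s))=c_k(s)$, which you obtain from $\mathcal{C}_k(w_i)=a_{ki}$ and $I_l-A\tilde A=\tilde A$. This identity is exactly what makes $H(\cdot,s)$ solve the functional equation of the second bullet, and the paper never checks it. Your computation of it is correct, and so are your treatments of the boundary conditions, the regularity in $t$, and the jump in the $t$-variable.

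One point in the third bullet needs correcting: lateral limits of $c_i$ are not enough. In $\partial_t^{n-1}H$ the correction term is $\sum_i w_i^{(n-1)}(t)\,c_i(s)$. The equalities $\partial_t^{n-1}H(t^-,t)=\partial_t^{n-1}H(t,t^+)$ and $\partial_t^{n-1}H(t,t^-)=\partial_t^{n-1}H(t^+,t)$, and the unit jump in the $s$-variable, therefore require this term to be continuous in $s$ at $s=t$. In general that means $c_i(t^-)=c_i(t)=c_i(t^+)$.

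For $n\ge 2$ and $\mathcal{C}_j$ continuous on $C(I)$ you do get this. Indeed $G$ is continuous on $I\times I$, so $s\mapsto G(\cdot,s)$ is continuous (not merely piecewise continuous) into $C(I)$. State it that way; the same fact also settles your Bochner interchange.

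Drop the fallback to $C^{n-1}$ and piecewise continuity. First, $G(\cdot,s)\notin C^{n-1}(I)$. Second, when $c_i$ genuinely jumps, the third bullet is actually false at $t=t_k$. This happens for functionals involving $u^{(n-1)}(t_k)$, or for $n=1$ with $\mathcal{C}(u)=u(t_k)$. For example, in the paper's own problem $u'+u+u(1/2)=t$ on $[-2,2]$ with $u(-2)=0$, we have $D=\emptyset$ and $c_1(s)=G(1/2,s)/(1+w_1(1/2))$. Hence $H(1/2,1/2^-)-H(1/2,1/2^+)=1-\frac{w_1(1/2)}{1+w_1(1/2)}=\frac{1}{1+w_1(1/2)}\neq 1$.

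So, once you claim continuity of $c_i$ rather than mere lateral limits, your argument proves the statement exactly in the range where it holds. That range is $n\ge 2$ with $\mathcal{C}_j$ continuous on $C(I)$.
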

\begin{remark}
	We notice that to prove Theorem \ref{teoprincipal}, we need to assume that there is a unique Green's function related to Problem \eqref{partir}. Nonetheless, it is worth noting that in some cases, the Problem \eqref{proprin} may admit a unique solution, whereas Problem \eqref{partir} does not. As an illustration, let us consider the following example.
	\begin{equation}
		\left\{
		\begin{aligned}
			u'(t)+m\,u(t) +M\,u(0) &= \sigma(t), \quad && t \in [0, 1], \\
			u(0) &= u(1),
		\end{aligned}
		\right.
		\label{contra}
	\end{equation}
	where $m$, $M \in \mathbb{R}$ and $\sigma \in \mathcal{L}^{1}([0,1])$.
	In \cite{cabada2014greens}, it is proved that the Green's function of Problem \eqref{contra} with $M=0$, which we denote by $G_{m}$, is given (when $m \neq 0$) by
	\begin{equation}
		G_{m}(t,s)=\frac{1}{e^{m}-1}\left\{
		\begin{aligned}
			e^{m(s-t+1)}, \quad &\textup{if }0 \leq s<t \leq 1,  \\
			e^{m(s-t)}, \quad &\textup{if }0 \leq t<s \leq 1.
		\end{aligned}
		\right.
		\label{defg}
	\end{equation}
	In the case when $m=0$, the homogeneous problem related to Problem \eqref{contra} with $M=0$ has nontrivial solutions. 
	Nevertheless, it is not difficult to verify that Problem \eqref{contra} admits a unique solution provided that $m+M \neq 0$. Taking into account Theorem \ref{teoprincipal} and using equation \eqref{hsimple}, we see that, whenever $m \neq 0$ and $m+M \neq 0$, the Green's function for Problem \eqref{contra} is given by
	\begin{equation*}
		H_{m,M}(t,s)=G_{m}(t,s)-\frac{M}{m+M}G_{m}(0,s),
	\end{equation*}
	where $G_{m}$ is defined in \eqref{defg}.
	
	When $m=0$, we see that $G_{m}$ is not well defined. Despite this, by direct integration, we obtain, for $M \neq 0$ that
	\begin{equation*}
		H_{0,M}(t,s) = 
		\begin{cases}
			\frac{1}{M}, & \text{if } 0 \leq s \leq t \leq 1, \\
			\frac{1 - M t}{M}, & \text{if } 0 \leq t < s \leq 1.
		\end{cases}
	\end{equation*}
	Thus, in this case, the Green's function corresponding to $m=0$ and $M \neq 0$ is uniquely determined and well-defined.
\end{remark}

We now present a series of examples that illustrate how studying problems of the form \eqref{proprin} allows us to analyze, among others, problems with integral-type dependence and those with piecewise constant arguments.
\begin{example}
	If operator $\mathcal{C}$ is given by $\mathcal{C}(u)=u(1/2)$ and $\gamma(t)=1$, then
	\begin{equation*}
		H(t,s)=G(t,s)-G(1/2,s) \frac{\int_{a}^{b}{G(t,r)}\mathrm{d}r}{1+\int_{a}^{b}{G(1/2,r) \mathrm{d}r}}.
		\label{hsimple}
	\end{equation*}
	As a representative example, we take
	\begin{equation}
		u'(t)+u(t)+u(1/2)=t, \quad t \in [-2,2], \quad u(-2)=0.
		\label{simedio}
	\end{equation}
	In Figure \ref{e1} we depict the Green's function $H$, while in Figure \ref{remedio} the solution and its derivative are shown.
	\begin{figure}[H] 
		\centering
		\includegraphics[width=0.5\textwidth]{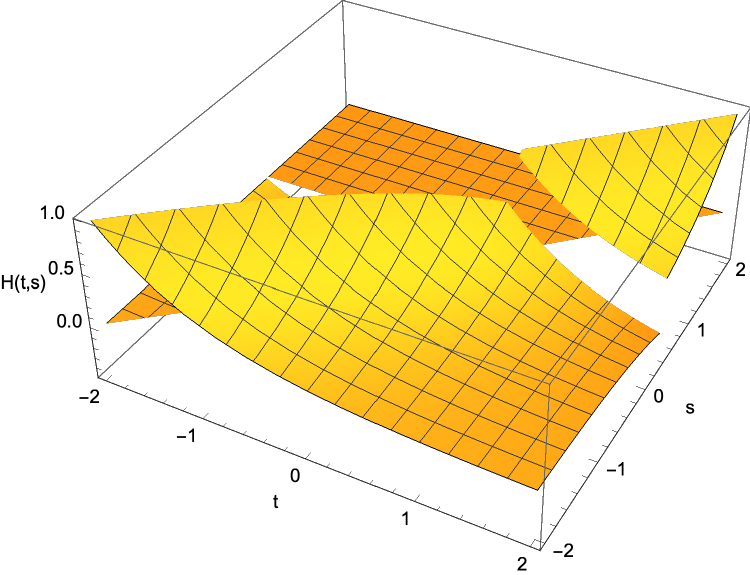}
		\caption{Representation of the Green's function $H$ of Problem \eqref{simedio}.}
		\label{e1}
	\end{figure}
	\begin{figure}[H]
		\centering
		\includegraphics[width=0.5\textwidth]{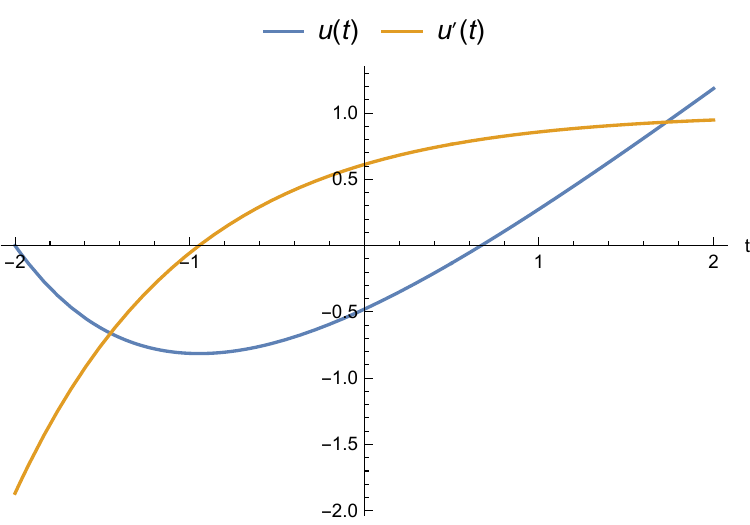}
		\caption{Representation of the solution $u$ and its derivative $u'$ corresponding to Problem \eqref{simedio}.}
		\label{remedio}
	\end{figure}
\end{example}
\begin{example}	 
	 Let us consider the case where $I=[-T,T]$ with $T>0$, and define
	 $\mathcal{C}_{j}(u)=u(j)$ for $j=[-T], \ldots, 0, \ldots, [T]$, where the function $[\cdot]$ is defined as
	 \begin{equation}
	 	\label{parteenteira}
	 	[t] = 
	 	\begin{cases}
	 		n, & \text{if } t \in [n, n+1), \\
	 		-n, & \text{if } t \in (-n-1, -n],
	 	\end{cases}
	 	\qquad n \in \mathbb{N},
	 \end{equation}
	 and $\gamma_{j}$ is defined by
	 \[
	 \gamma_{j}(t)=M\,\chi_{(j-1,j) \cap [-T,T]}(t), \quad j=[-T], \ldots, -1,
	 \]
	 \[
	 \gamma_{j}(t)=M\,\chi_{(j,j+1) \cap [-T,T]}(t), \quad j=1, \ldots, [T],
	 \]
	 \[
	 \gamma_{0}(t)=M\,\chi_{(-1,1) \cap [-T,T]}(t),
	 \]
	 where $M \in \mathbb{R}$ and $\chi_{J}(t)$ denotes the characteristic function on $J$.
	 
	 Then we obtain the following problem

	 \begin{equation}
	 	\left\{
	 	\begin{aligned}
	 		L_n[M]u(t)+M\,u([t]) &= \sigma(t), \quad && t \in I, \\
	 		V_i(u) &= 0, \quad && i = 1, \ldots, n,
	 	\end{aligned}
	 	\right.
	 	\label{retardo}
	 \end{equation}
	where $V_{i}$ and $L_{n}$ are defined in \eqref{defv} and \eqref{defl}, respectively, and $\sigma \in \mathcal{L}^{1}(I)$. 
	By applying Theorem \ref{teoprincipal}, we recover the formulas obtained in \cite[Chapter 3]{cabada2025reflection}.
	
	As we have mentioned before, equations with piecewise constant arguments constitute a broad and active area of research. They find diverse applications: in ecology, for modeling populations with discrete generations or seasonal effects; in control engineering, where systems updates or measurements occur at fixed time intervals; in medicine, to describe drug administration at scheduled times; and in epidemiology, to capture interventions or treatments applied periodically. Our method further allows the study of variants involving dependencies on $u([t-j])$ or $u([t/2])$, and can be extended to analyze systems such as those presented in \cite{karakoc2018oscillation, liu1999global}.
	\label{exaenteiro}
	
\end{example}
\begin{example}
	Let us consider the same conditions as in the previous example, but now replace the linear operators $ \mathcal{C}_{j} $ by
	\begin{equation*}
		\begin{aligned}
			\mathcal{C}_{[-T]}(u)&=\int_{-T}^{[-T]} u(s) \mathrm{d}s, \quad \mathcal{C}_{[T]}(u) = \int_{[T]}^{T} u(s)\mathrm{d}s, \quad \mathcal{C}_0(u)=\int_{\min\{-T,-1\}}^{\max\{T,1\}}u(s) \mathrm{d}s \\
			\mathcal{C}_{j}(u) &= \int_{j-1}^{j} u(s)\,\mathrm{d}s \quad \text{for } j = [ -T ]+1, \ldots,-1, \\
			\mathcal{C}_{j}(u) &= \int_{j}^{j+1} u(s)\,\mathrm{d}s \quad \text{for } j = 1, \ldots [ T ]-1.
		\end{aligned}
	\end{equation*}
	Then, by applying again Theorem \ref{teoprincipal}, we can obtain the Green's function of the following problem:
	\begin{equation}
		\left\{
		\begin{aligned}
			L_n[M]\,u(t) \;+\; M \int_{\max{\{-T,\lfloor t \rfloor\}}}^{\min\{\lfloor t \rfloor+1,\, T\}} u(s)\, 	\mathrm{d}s &= \sigma(t), 
			&& t \in I, \\
			V_i(u) &= 0, 
			&& i = 1, \ldots, n,
		\end{aligned}
		\right.
		\label{pint}
	\end{equation}
	where $V_{i}$ and $L_{n}$ are defined in \eqref{defv} and \eqref{defl}, respectively, $\sigma \in \mathcal{L}^{1}(I)$ and
	we define the floor function by
	\[
	\lfloor t \rfloor = \max \{ n \in \mathbb{Z} : n \le t \}.
	\]
	
	The functional dependence on $\int_{\lfloor t \rfloor}^{\lfloor t \rfloor+1}u(s)\mathrm{d}s$ can be used to model systems where the rate of change at a given time depends on the cumulative effect over a discrete time block. This is relevant in ecology for populations affected by the total density  during a generation or season, in control systems where inputs are adjusted based on accumulated measurements over fixed intervals, and in epidemiology or medicine to capture effects of treatments or interventions applied periodically.
	
	As an illustrative example, we will consider the following problem:
	\begin{equation}
		u'(t)+u(t)+\int_{\lfloor t \rfloor}^{\lfloor t\rfloor+1}{u(s) \mathrm{d}s}=t, \, t \in [-2,2], \quad u(-2)=0.
		\label{ejint}
	\end{equation}
	Based on the preceding discussion, we can obtain the corresponding Green's function and the explicit solution to Problem \eqref{ejint}. In Figure \ref{e2} we show its Green's function and in Figure \ref{repint}, we display the solution and its derivative.
	\begin{figure}[H] 
	\centering
	\includegraphics[width=0.5\textwidth]{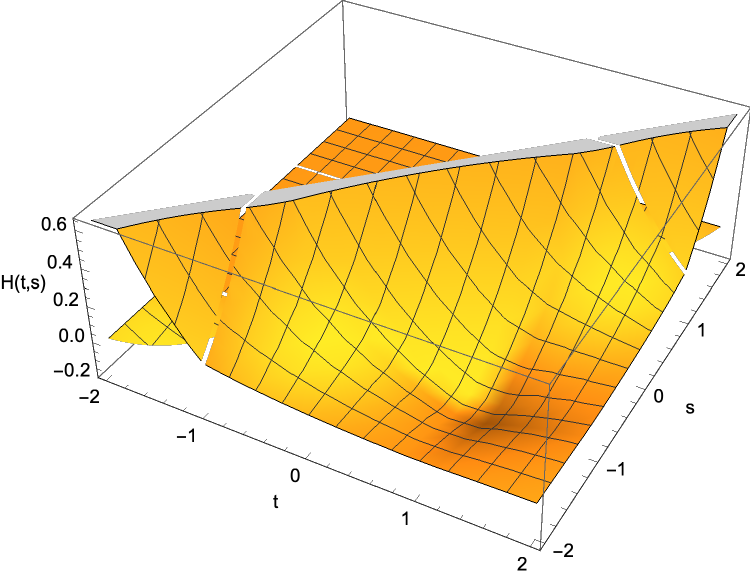}
	\caption{Representation of the Green's function $H$ of Problem \eqref{ejint}.}
	\label{e2}
\end{figure}
	\begin{figure}[H]
		\centering
		\includegraphics[width=0.5\textwidth]{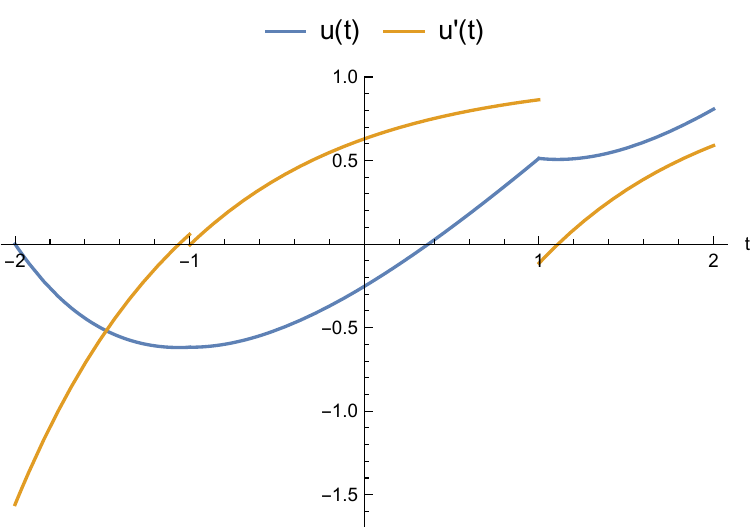}
		\caption{Representation of the solution $u$ and its derivative $u$ corresponding to Problem \eqref{ejint}.}
		\label{repint}
	\end{figure}	
\end{example}	
\begin{example}
	Our approach also allows us to deal with certain types of integro-differential equations, among which are the Fredholm integro-differential equations, of the general form
	\begin{equation}
		\left\{
		\begin{aligned}
			L_n u(t)+\int_a^b{K(t,s)u(s) \mathrm{d}s}&= \sigma(t), \quad && t \in I, \\
			V_i(u) &= 0, \quad && i = 1, \ldots, n,
		\end{aligned}
		\right.
		\label{fredgen}
	\end{equation}
	where $V_{i}$ and $L_{n}$ are defined in \eqref{defv} and \eqref{defl}, respectively, $\sigma \in \mathcal{L}^{1}(I)$ and $K(t,s)$ is a kernel function modeling the influence of the history of the solution $u(t)$ on its current evolution at time $t$. This integral term introduces a nonlocal dependence on the state of the function.
	
	These equations have numerous applications in materials science and viscoelasticity \cite{fabrizio1992mathematical}, in fluid dynamics with memory \cite{douglas1997single}, in control theory of systems with delay or memory \cite{bensoussan2007representation} and also in biological and neural models \cite{erneux2009applied}. Their mathematical study is well established in both classical and modern references on integro-differential equations \cite{polyanin2008handbook}.
	
	In particular, we focus here on the case of separable (or semi-degenerate) kernels, namely those of the form
	\begin{equation}
		K(t,s)=\sum_{k=1}^{l} \phi_k(t) \xi_k(s),
		\label{expk}
	\end{equation}
	so that the integral operator reduces to
	\begin{equation*}
		\int_a^b{K(t,s)u(s)\mathrm{d}s}=\sum_{k=1}^{l}{\phi_k(t)} \left(\int_a^b{\xi_k(s)u(s) \mathrm{d}s} \right).
	\end{equation*}
	
	The study of such kernels is important not only because of their direct applications (e.g., in modeling memory effects in viscoelastic materials, approximating kernels in fluid dynamics, or analyzing control systems with memory), but also because they serve as a foundation for the analysis of more general kernels \cite{wazwaz2011linear, yuldashev2018nonlocal}.
	
	If we consider $\mathcal{C}_k(u)=\int_a^b{\xi_k(s)u(s) \mathrm{d}s}$ and $\gamma_k(t)=\phi_k(t)$, for $k=1, \ldots, l$, the Green's function of Problem \eqref{fredgen} can be obtained via Theorem \ref{teoprincipal}.
	
	As an illustrative example, we consider
	\begin{equation}
		u'(t) + u(t) + \int_{-2}^{2} \left( \sin(t)\cos(s) + t^{2} e^{s} \right) 	u(s)\, \mathrm{d}s 
		= t, \quad t \in [-2,2], \quad u(-2)=0.
		\label{fred}
	\end{equation}
	In this case, we have $K(t,s)=\sin{(t)} \cos({s})+t^2e^s$ and
	\begin{equation*}
		\phi_{1}(t)=\sin{(t)}, \quad \phi_2(t)=t^2, \quad \xi_1(s)=\cos{(s)} \quad \textup{and} \quad \xi_2(s)=e^s.
	\end{equation*}
	In Figure \ref{e3} we depict the Green's function, while Figure \ref{grafred} displays the solution to Problem \eqref{fred} along with its derivative.
	\begin{figure}[H] 
		\centering
		\includegraphics[width=0.5\textwidth]{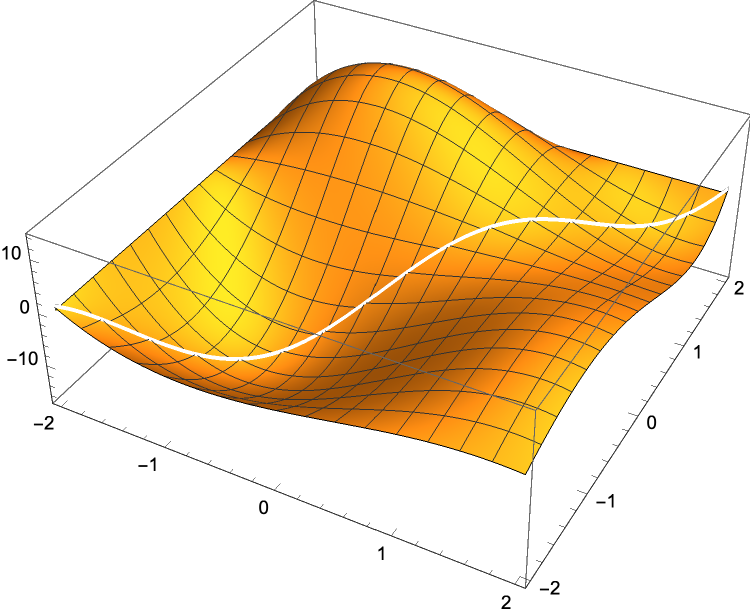}
		\caption{Representation of the Green's function $H$ of Problem \eqref{fred}.}
		\label{e3}
	\end{figure}
	\begin{figure}[H]
		\centering
		\includegraphics[width=0.5\textwidth]{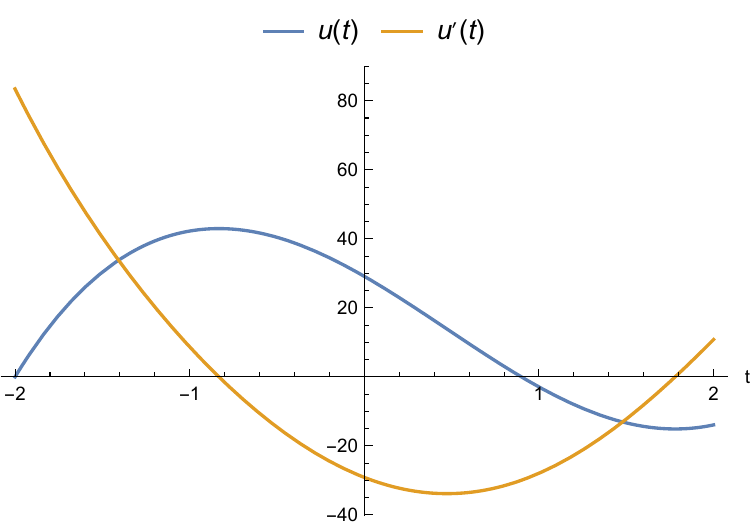}
		\caption{Representation of the solution $u$ and its derivative $u'$ corresponding to Problem \eqref{fred}.}
		\label{grafred}
	\end{figure}
\end{example}
Note that these examples help us verify the properties of the Green's functions stated in Proposition \ref{caracterizarh}. In particular, in Figure \ref{e1} we observe that $H(t,\cdot)$ is of class $C^1$ except at $t=s$, whereas $H(\cdot,s)$ exhibits a jump discontinuity at $s=t$ and another at $s=\frac{1}{2}$.

On the other hand, in Figure \ref{e2}, we see that $H\left(t,\cdot\right)$ has a jump at $t=s$ and is not differentiable at $t=-1$ and $t=1$, analogously to $H\left(\cdot,s\right)$, which has a jump at $s=t$ and is not differentiable at $s=-1$ and $s=1$.

Finally, in Figure \ref{e3} we observe that both $H(t,\cdot)$ and $H(\cdot,s)$ are of class $C^1$ except at $t=s$.

\section{Comparison Results}
In this section, we establish a relationship between the Green's functions related to two problems of the form \eqref{proprin}, sharing the same linear operators but involving different functions $\gamma_{i}(t)$. Our approach follows the ideas presented in \cite{cabada2024explicit}.
\begin{remark}
	Although in all the following sections we focus on functional problems of the form of Problem \eqref{proprin}, these results can also be extended to the corresponding adjoint problems, which will be obtained later on. To this end, the procedure can be carried out by taking into account that $G^*(t,s)=G(s,t)$.
\end{remark}
Consider the two following problems:
\begin{equation}
	L_{n}u_{0}(t)+\sum_{k=1}^{l}{\gamma_{k}^{0}(t)\mathcal{C}_{k}(u_{0})}=\sigma(t), \quad t \in I, \quad V_{i}(u_{0})=0,\, i=1, \ldots, n
	\label{prob1}
\end{equation}
\begin{equation}
	L_{n}u_{1}(t)+\sum_{k=1}^{l}{\gamma_{k}^{1}(t)\mathcal{C}_{k}(u_{1})}=\sigma(t), \quad t \in I, \quad V_{i}(u_{1})=0, \, i=1, \ldots, n
	\label{prob2}
\end{equation}
where $V_{i}$ and $L_{n}$ are defined in \eqref{defv} and \eqref{defl}, respectively, and $\sigma \in \mathcal{L}^{1}(I)$. 

From the two previous expressions, we arrive at the following equality for all $t \in I$:
\begin{equation*}
	L_{n}u_{0}(t)+\sum_{k=1}^{l}{\gamma_{k}^{1}(t) \mathcal{C}_{k}(u_{0})}=\sum_{k=1}^{l}{\left(\gamma_{k}^{1}(t)-\gamma_{k}^{0}(t) \right)}\mathcal{C}_{k}(u_{0})+\sigma(t), \quad t \in I, \quad V_{i}(u_{0})=0, \, i=1, \ldots, n.
\end{equation*}
Let $H_{0}$ denote the Green's function related to Problem \eqref{prob1} and $H_{1}$ denote the Green's function of Problem \eqref{prob2}, assuming that they exist. Consequently
\begin{equation*}
	u_{0}(t)=\int_{a}^{b}{H_{0}(t,s) \sigma(s) \mathrm{d}s},
\end{equation*}
\begin{equation*}
	u_{1}(t)=\int_{a}^{b}{H_{1}(t,s) \sigma(s) \mathrm{d}s}.
\end{equation*}
Hence, we obtain that
\begin{equation*}
	\begin{aligned}
		u_{0}(t)&=\int_{a}^{b}{H_{1}(t,s)\left(\sum_{k=1}^{l}(\gamma_{k}^{1}(s)- \gamma_{k}^{0}(s))\mathcal{C}_{k}(u_{0}) \right) \mathrm{d}s}+\int_{a}^{b}{H_{1}(t,s) \sigma(s) \mathrm{d}s} \\
		&=\int_{a}^{b}{H_{1}(t,s) \left(\sum_{k=1}^{l}\mathcal{C}_{k}\left(\int_{a}^{b}{H_{0}(\cdot,r)\sigma(r)}\mathrm{d}r\right) (\gamma_{k}^{1}(s)-\gamma_{k}^{0}(s))\right) \mathrm{d}s}+\int_{a}^{b}{H_{1}(t,s) \sigma(s) \mathrm{d}s} \\
		&=\int_{a}^{b}\int_{a}^{b}H_{1}(t,r)\left( \sum_{k=1}^{l}{(\gamma_{k}^{1}(r)-\gamma_{k}^{0}(r))}\mathcal{C}_{k}(H_{0}(\cdot,s)) \sigma(s) \right) \mathrm{d}r \,\mathrm{d}s+\int_{a}^{b}{H_{1}(t,s) \sigma(s) \mathrm{d}s}.
	\end{aligned}
\end{equation*}
Consequently, as the previous equalities hold for every $\sigma \in \mathcal{L}^{1}(I)$, we ultimately conclude that
\begin{equation}
	H_{0}(t,s)=H_{1}(t,s)+\sum_{k=1}^{l}{\mathcal{C}_{k}(H_{0}(\cdot,s))\int_{a}^{b}(\gamma_{k}^{1}(r)-\gamma_{k}^{0}(r))H_{1}(t,r) \mathrm{d}r}.
	\label{eccom}
\end{equation}
Furthermore, in an analogous way, we can see
\begin{equation}
	H_{1}(t,s)=H_{0}(t,s)-\sum_{k=1}^{l}{\mathcal{C}_{k}(H_{1}(\cdot,s))\int_{a}^{b}(\gamma_{k}^{1}(r)-\gamma_{k}^{0}(r))H_{0}(t,r) \mathrm{d}r}.
	\label{eccom2}
\end{equation}
\begin{example}
	If we consider two problems of the same form as in Example \ref{exaenteiro} (with piecewise constant arguments), the first with $M=M_{0}$ and the second with $M=M_{1}$, then the relationship between the corresponding Green's functions, denoted by $H_{M_{0}}$ and $H_{M_{1}}$, is given by the following equation:
	\begin{equation*}
		H_{M_{0}}(t,s)=H_{M_{1}}(t,s)+(M_{1}-M_{0})\int_{-T}^{T}{H_{M_{1}}(t,r)H_{M_{0}}([r],s) \mathrm{d}r}.
	\end{equation*}
	This yields the same equality as that of equation $(4)$ in \cite{cabada2025reflection} with $m_1=m_0=0$.
\end{example}
\begin{proposition}
	Let $H_{\alpha}$ be the Green's function of Problem \eqref{proprin} with $\gamma_{k}=\gamma_{k}^{\alpha}$ for all $k=1, \ldots, n$, and consider a family of problems of the form \eqref{proprin} obtained by varying the functions $\gamma_{k}^{\alpha}$, for $k=1, \ldots, l$. Then 
	\begin{enumerate}
		\item If $H_{\alpha}$ is positive on $\mathring{I} \times \mathring{I}$ and $\mathcal{C}_{k}(H_{\alpha}(\cdot,s))$ is positive on $\mathring{I}$ for all $k=1, \ldots, l$ then $H_{\alpha}$ decreases with respect to $\gamma_{k}^{\alpha}$. Moreover, if we consider two set of functions $\gamma_{k}^{0}$ and $\gamma_{k}^{1}$ such that $\gamma_{k}^{1}> \gamma_{k}^{0}$ for all $k=1, \ldots, l$, $H_{0}>0$, and if $H_{1}>0$ on $\mathring{I} \times \mathring{I}$ with $\mathcal{C}_{k}(H_{1}(\cdot,s))>0$ or $\mathcal{C}_{k}(H_{0}(\cdot,s))>0$ on $\mathring{I}$, then $H_{0}>H_{1}$ on $\mathring{I} \times \mathring{I}$.
		
		\item If $H_{\alpha}$ is negative on $\mathring{I} \times \mathring{I}$ and $\mathcal{C}_{k}(H_{\alpha}(\cdot,s))$ is negative on $\mathring{I}$ for all $k=1, \ldots, l$ then $H_{\alpha}$ decreases with respect to $\gamma_{k}$. Moreover, if we consider two set of functions $\gamma_{k}^{0}$ and $\gamma_{k}^{1}$ such that $\gamma_{k}^{1}> \gamma_{k}^{0}$ for all $k=1, \ldots, l$, $H_{0}<0$, and if $H_{1}<0$ on $\mathring{I} \times \mathring{I}$, with $\mathcal{C}_{k}(H_{1}(\cdot,s))<0$ or $\mathcal{C}_{k}(H_{0}(\cdot,s))<0$ on $\mathring{I}$, then $H_{0}<H_{1}$ on $\mathring{I} \times \mathring{I}$.
	\end{enumerate}
\end{proposition}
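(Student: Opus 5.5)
The plan is to read both parts of the statement directly off the comparison identities \eqref{eccom} and \eqref{eccom2}, which hold whenever $H_0$ and $H_1$ exist. The only inputs are the signs of the factors in each summand.

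For the second claim of item 1, take $\gamma_k^1>\gamma_k^0$ for all $k$. Suppose first that $\mathcal{C}_k(H_0(\cdot,s))>0$ on $\mathring{I}$ and $H_1>0$ on $\mathring{I}\times\mathring{I}$. Then in \eqref{eccom}, for $(t,s)\in\mathring{I}\times\mathring{I}$, each summand
\[
\mathcal{C}_{k}(H_{0}(\cdot,s))\int_{a}^{b}(\gamma_{k}^{1}(r)-\gamma_{k}^{0}(r))H_{1}(t,r)\,\mathrm{d}r
\]
is a product of a positive number and a positive integral. The integrand is positive a.e., since $\{a,b\}$ has measure zero and $H_1(t,\cdot)>0$ on $\mathring{I}$. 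Hence $H_0(t,s)>H_1(t,s)$.

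If instead $\mathcal{C}_k(H_1(\cdot,s))>0$, I will use \eqref{eccom2}. We have $H_0>0$ on $\mathring{I}\times\mathring{I}$, so the integral $\int_a^b(\gamma_k^1-\gamma_k^0)H_0(t,r)\,\mathrm{d}r$ is positive. The subtracted sum is then positive, and again $H_1<H_0$.

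The first claim of item 1, that $H_\alpha$ decreases with respect to $\gamma_k^\alpha$, I will interpret as this same comparison applied to any two members of the family: whenever one set of coefficients dominates the other and the sign hypotheses hold, the larger coefficients give the smaller Green's function. In the statement, positivity is assumed for the $\alpha$ in question. So I will apply the previous argument to the pair $(\gamma^0,\gamma^1)$ with the hypotheses placed on whichever function is needed, as above. If a differential formulation is wanted instead, I could divide \eqref{eccom} by a small perturbation $\varepsilon\delta_k$ and let $\varepsilon\to0$. Using continuity of $H$ in $\gamma$, which follows from the explicit formula \eqref{ecprincipal} since $\tilde A$ depends continuously on $\gamma$, this gives the derivative
\[
-\,\mathcal{C}_k(H_\alpha(\cdot,s))\int_a^b \delta_k(r)H_\alpha(t,r)\,\mathrm{d}r,
\]
which is negative for $\delta_k>0$.

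Item 2 is identical with all signs reversed. In \eqref{eccom}, if $\mathcal{C}_k(H_0(\cdot,s))<0$ and $H_1<0$, then each summand is the product of two negative quantities and so is positive, giving $H_0>H_1$. I expect this to be the main obstacle: the statement claims $H_0<H_1$, so the signs must be checked carefully. A product of two negative factors is positive, so the conclusion appears to be $H_0>H_1$, i.e.\ $H$ is still decreasing in $\gamma$. This agrees with the words ``decreases with respect to $\gamma_k$'' in item 2, and I would reconcile the inequality with that monotonicity claim. The case $\mathcal{C}_k(H_1(\cdot,s))<0$ is handled in the same way with \eqref{eccom2}.
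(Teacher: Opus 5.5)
Your argument for item 1 is the paper's own: you read the sign of the correction term off \eqref{eccom} when $\mathcal{C}_k(H_0(\cdot,s))>0$, and off \eqref{eccom2} when $\mathcal{C}_k(H_1(\cdot,s))>0$. For item 2 the paper only says ``analogous'', and your sign check is correct: both identities force $H_0>H_1$, so the stated $H_0<H_1$ is a sign error in the statement (``decreases'' is the right word). The paper's own example \eqref{firstorder} confirms this: there $H_{m,M}=G_m-\frac{M}{m(m+M)}$ and $\mathcal{C}(H_{m,M}(\cdot,s))=\frac{1}{m+M}$, so for $m=1$ and $M_0=-2<M_1=-\frac{3}{2}$ both Green's functions and both $\mathcal{C}$-values are negative (since $G_1\le\frac{e}{e-1}<2$), yet $H_0=G_1-2>G_1-3=H_1$.
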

\begin{proof}
	The results of the previous proposition follow directly from equations \eqref{eccom} and \eqref{eccom2}. We prove the first statement.
	
	Under the assumptions of the proposition, we have $H_0$ and $H_1>0$, and either $\mathcal{C}_k(H_0(\cdot,s))>0$ or  $\mathcal{C}_k(H_1(\cdot,s))>0$.
	
	If $\mathcal{C}_k(H_0(\cdot,s))>0$, then
	\begin{equation*}
		\sum_{k=1}^{l}{\mathcal{C}_{k}(H_{0}(\cdot,s))\int_{a}^{b}(\gamma_{k}^{1}(r)-\gamma_{k}^{0}(r))H_{1}(t,r) \mathrm{d}r}>0,
	\end{equation*}
	and, by \eqref{eccom}, it follows that
	\begin{equation*}
		H_{0}>H_{1} \textup{ on }\mathring{I} \times \mathring{I}.
	\end{equation*}
	On the other hand, if $\mathcal{C}_k(H_1(\cdot,s))>0$, then
	\begin{equation*}
		\sum_{k=1}^{l}{\mathcal{C}_{k}(H_{1}(\cdot,s))\int_{a}^{b}(\gamma_{k}^{1}(r)-\gamma_{k}^{0}(r))H_{0}(t,r) \mathrm{d}r}>0,
	\end{equation*}
	and applying \eqref{eccom2} yields
	\begin{equation*}
		H_{0}>H_{1} \textup{ on }\mathring{I} \times \mathring{I}.
	\end{equation*}
	This proves the first statement. The proof of the second one is analogous.
\end{proof}

\section{Adjoint Operator}
The aim of this section is to obtain the adjoint operator associated with the previously formulated problem and to characterize the corresponding Green's function. 

This can be of particular interest because, in light of the results presented below, knowing the sign of the Green's function related with a given operator also determines the sign of the Green's function of its adjoint. Consequently, this allows us to study simultaneously the existence of solutions for a problem and for its adjoint.

Let's recall now Theorem 1.3.1 of \cite{cabada2014greens}. 
\begin{theorem}
	Let $T: \mathcal{L}^2(I, \mathbb{R}^{n}) \rightarrow \mathcal{L}^{2}(I, \mathbb{R}^{n})$ be an operator given by
	\begin{equation}
		T u(t)=\int_a^b{K(t,s)u(s) \mathrm{d}s}, \quad t \in I,
	\end{equation}
	where $K: I \times I \rightarrow \mathcal{M}_{n \times n}$, such that $K \in \mathcal{L}^2(I \times I, \mathcal{M}_{n \times n})$. If we denote by $K^T$ the transpose matrix kernel of $K$, then, for all $v \in \mathcal{L}^2(I, \mathbb{R}^n)$, the adjoint operator of $T$, $T^{*}$, is given by the following expression.
	\begin{equation}
		T^{*}v(t)=\int_a^b{K^T(s,t)v(s)\mathrm{d}s}, \quad t \in I.
	\end{equation}
	
\end{theorem}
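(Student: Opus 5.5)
The plan is to verify the identity $\langle Tu, v\rangle = \langle u, T^{*}v\rangle$ for all $u,v \in \mathcal{L}^2(I,\mathbb{R}^n)$, where $\langle u,v\rangle=\int_a^b u(t)^T v(t)\,\mathrm{d}t$ is the standard inner product. By the Riesz representation theorem, the adjoint of a bounded operator is the unique operator with this property, so establishing the identity proves the statement.

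First I will check that $T$ is well defined and bounded. For a.e. $t$, the Cauchy--Schwarz inequality applied to $K(t,\cdot)$ (measurable in $s$ by Fubini, square integrable for a.e. $t$) gives
\begin{equation*}
|Tu(t)|^2 \le \left(\int_a^b \|K(t,s)\|^2\,\mathrm{d}s\right)\|u\|_2^2 .
\end{equation*}
Integrating in $t$ then yields $\|Tu\|_2 \le \|K\|_{\mathcal{L}^2(I\times I)}\|u\|_2$, so $T$ is a bounded (Hilbert--Schmidt) operator. Hence $T^{*}$ exists and is unique. The same estimate applied to the kernel $(t,s)\mapsto K^T(s,t)$, which lies in $\mathcal{L}^2(I\times I,\mathcal{M}_{n\times n})$ with the same norm, shows that the proposed formula defines a bounded operator on $\mathcal{L}^2$.

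Next I will compute
\begin{equation*}
\langle Tu,v\rangle=\int_a^b\left(\int_a^b K(t,s)u(s)\,\mathrm{d}s\right)^T v(t)\,\mathrm{d}t=\int_a^b\int_a^b u(s)^T K^T(t,s) v(t)\,\mathrm{d}s\,\mathrm{d}t .
\end{equation*}
Swapping the order of integration and renaming variables gives $\int_a^b u(t)^T\left(\int_a^b K^T(s,t)v(s)\,\mathrm{d}s\right)\mathrm{d}t$, which equals $\langle u, T^{*}v\rangle$ with $T^{*}$ as in the statement.

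The only delicate step is justifying the interchange of integrals, which requires Fubini's theorem and hence absolute integrability of $(s,t)\mapsto u(s)^T K^T(t,s) v(t)$ on $I\times I$. I will bound it by $\|K(t,s)\|\,|u(s)|\,|v(t)|$ and apply Cauchy--Schwarz on $I\times I$. This gives an integral at most $\|K\|_{\mathcal{L}^2}\,\|u(s)v(t)\|_{\mathcal{L}^2(I\times I)}=\|K\|_{\mathcal{L}^2}\|u\|_2\|v\|_2<\infty$, so the interchange is legitimate and the proof is complete.
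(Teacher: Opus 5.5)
Your proof is correct and complete. The Hilbert--Schmidt estimate gives boundedness both of $T$ and of the candidate adjoint, whose kernel $(t,s)\mapsto K^T(s,t)$ has the same $\mathcal{L}^2(I\times I)$ norm. The bound $\int_a^b\int_a^b \|K(t,s)\|\,|u(s)|\,|v(t)|\,\mathrm{d}s\,\mathrm{d}t\le \|K\|_{\mathcal{L}^2}\|u\|_2\|v\|_2$ is exactly what justifies Fubini. Two small details you could make explicit:
\begin{itemize}
\item Take $\|\cdot\|$ on $\mathcal{M}_{n\times n}$ to be the Frobenius norm used in $\|K\|_{\mathcal{L}^2}$. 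It dominates the operator norm, which gives $|K(t,s)u(s)|\le\|K(t,s)\|\,|u(s)|$.
\item Measurability of $Tu$ also follows from Fubini, since $K(t,s)u(s)$ is integrable on $I\times I$.
\end{itemize}

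The paper itself gives no proof of this statement. It only recalls it as Theorem 1.3.1 of \cite{cabada2014greens}, in order to deduce $G^*(t,s)=G(s,t)$. Your Fubini-based verification is the standard argument for that result.
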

As a corollary of previous theorem, we can present the following result.
\begin{corollary}
	\label{coroadjoint}
	Let $G$ be the Green's function for the boundary value problem $L_nu=\sigma$ subject to the boundary conditions $V_i (u)=0$. Then the Green's function $G^*$ for the adjoint problem $L_n^*v=\sigma^{*}$ with adjoint boundary conditions $V_i^*(v)=0$ exists and is given by
	\begin{equation*}
		G^*(t,s)=G(s,t).
	\end{equation*}
\end{corollary}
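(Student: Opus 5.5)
The plan is to realise the solution operator of $L_nu=\sigma$, $V_i(u)=0$ as the integral operator $T\sigma(t)=\int_a^b G(t,s)\sigma(s)\,\mathrm{d}s$ and then identify its Hilbert space adjoint with the solution operator of the adjoint problem. First, since $G$ is bounded on $I\times I$ (it is continuous off the diagonal with finite jumps of its $(n-1)$-th derivative, by the standard properties in \cite[Definition 1.4.1]{cabada2014greens}), we have $G\in\mathcal{L}^2(I\times I)$. Hence the previous theorem with $n=1$ (scalar kernel, so $K^T=K$) applies, and $T^*v(t)=\int_a^b G(s,t)v(s)\,\mathrm{d}s$.

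Second, I will check that $T^*$ inverts the adjoint boundary value problem. Recall that the formal adjoint $L_n^*$ and the adjoint conditions $V_i^*$ are defined so that the Lagrange/Green formula
\[
\int_a^b \bigl(L_nu\bigr)v\,\mathrm{d}t=\int_a^b u\,\bigl(L_n^*v\bigr)\,\mathrm{d}t
\]
holds whenever $V_i(u)=0$ and $V_i^*(v)=0$. I first argue that the adjoint homogeneous problem has only the trivial solution. This follows from the Fredholm alternative for two-point problems: the homogeneous problem and its adjoint have the same number of linearly independent solutions, and the former has none by uniqueness of $G$. So the adjoint problem has a unique Green's function $G^*$, with $v=\int_a^b G^*(\cdot,s)\sigma^*(s)\,\mathrm{d}s$.

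Third, for arbitrary $\sigma,\sigma^*$ (say in $\mathcal{L}^2$, then extended by density to $\mathcal{L}^1$), let $u=T\sigma$ and let $v$ solve the adjoint problem with datum $\sigma^*$. The Lagrange identity gives
\[
\int_a^b\sigma v\,\mathrm{d}t=\int_a^b u\,\sigma^*\,\mathrm{d}t=\int_a^b (T\sigma)\sigma^*\,\mathrm{d}t=\int_a^b\sigma\,(T^*\sigma^*)\,\mathrm{d}t .
\]
Since $\sigma$ is arbitrary, $v=T^*\sigma^*$, that is,
\[
\int_a^b G^*(t,s)\sigma^*(s)\,\mathrm{d}s=\int_a^b G(s,t)\sigma^*(s)\,\mathrm{d}s
\]
for all $\sigma^*$. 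Hence $G^*(t,s)=G(s,t)$ almost everywhere, and everywhere off the diagonal by the piecewise continuity of both kernels.

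The main obstacle is the regularity bookkeeping in the Lagrange identity. When $a_k\in\mathcal{L}^1$, the formal adjoint $L_n^*v=\sum(-1)^k(a_{n-k}v)^{(k)}$ only makes sense in a quasi-derivative sense, so the identity must be justified for functions whose quasi-derivatives are absolutely continuous. I would handle this by working with the equivalent first-order system $x'=A(t)x$ and its adjoint $y'=-A^T(t)y$. For these the bilinear concomitant $\langle x,y\rangle$ is absolutely continuous, and the identity reduces to integrating its derivative. The adjoint boundary conditions are then exactly those that annihilate the boundary term.
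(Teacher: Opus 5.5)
Your proposal is correct and follows the paper's route: the paper presents the corollary as an immediate consequence of the theorem on adjoints of $\mathcal{L}^2$ integral operators, and your Lagrange-identity step, which identifies $T^*$ with the solution operator of the adjoint problem and establishes existence of $G^*$, fills in what the paper leaves implicit. You can also drop the Fredholm-alternative citation: if $v$ solves the adjoint homogeneous problem, the same identity gives $\int_a^b \sigma v\,\mathrm{d}t=\int_a^b (T\sigma)\,(L_n^*v)\,\mathrm{d}t=0$ for every $\sigma$, so $v=0$ directly.
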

To begin, we consider a slight modification of Problem \eqref{proprin}. Specifically, we work with operators $\overline{\mathcal{C}}_k:\mathcal{H} \rightarrow \mathbb{R}$, where $\mathcal{H}$ is a Hilbert space, and the operators $\overline{\mathcal{C}}_k$ are linear and continuous with respect to the norm of $\mathcal{H}$. Note that the operators $\overline{\mathcal{C}}_k$ were not included in the previous assumptions, since $C(I)$ is not a Hilbert space.

Therefore, we consider the following problem:
\begin{equation}
	\left\{
	\begin{aligned}
		K_n u(t) := L_n u(t) +\sum_{k=1}^l \gamma_k(t) \overline{\mathcal{C}}_k(u))&= 0, \quad && t \in I, \\
		V_i(u) &= 0, \quad && i = 1, \ldots, n,
	\end{aligned}
	\right.
	\label{modificado}
\end{equation}
where $V_{i}$ and $L_{n}$ are defined in \eqref{defv} and \eqref{defl}, respectively.

Let us recall the Riesz-Fréchet Representation Theorem.

\begin{theorem}{(Riesz-Fréchet Representation Theorem.)}
	Let $\mathcal{H}$ be a Hilbert space over the field $\mathbb{K}$, and let $\mathcal{D}: \mathcal{H} \rightarrow \mathbb{K}$ be a linear continuous functional. Then there exists a unique $y \in \mathcal{H}$ such that
	\begin{equation*}
		\mathcal{D}(x)= \langle x,y \rangle, \quad \textup{ for all }x \in \mathcal{H}.
	\end{equation*}	
	Furthermore, $\|D\|=\|y\|$.
\end{theorem}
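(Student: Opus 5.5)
The statement to prove is the Riesz--Fr\'echet Representation Theorem. I would follow the classical orthogonal-projection argument.

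\textbf{Uniqueness and the trivial case.} First I would settle uniqueness. If $\langle x,y_1\rangle=\langle x,y_2\rangle$ for all $x\in\mathcal{H}$, then taking $x=y_1-y_2$ gives $\|y_1-y_2\|^2=0$, so $y_1=y_2$. For existence, the case $\mathcal{D}=0$ is handled by $y=0$. So I would assume $\mathcal{D}\neq 0$ and set $N=\ker\mathcal{D}$. By linearity and continuity of $\mathcal{D}$, $N$ is a closed proper subspace of $\mathcal{H}$.

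\textbf{Existence.} This is the step needing care. I would use the projection theorem in Hilbert spaces: for a closed subspace, $\mathcal{H}=N\oplus N^{\perp}$. This relies on completeness, through the existence of a nearest point in a closed convex set. Since $N\neq\mathcal{H}$, there is some $z\in N^{\perp}$ with $z\neq0$, and we may normalise so that $\|z\|=1$. Then $\mathcal{D}(z)\neq0$, because $z\notin N$.

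For any $x\in\mathcal{H}$, the vector $\mathcal{D}(x)z-\mathcal{D}(z)x$ lies in $N$, since $\mathcal{D}$ vanishes on it. Hence it is orthogonal to $z$, which gives
\begin{equation*}
\mathcal{D}(x)\langle z,z\rangle=\mathcal{D}(z)\langle x,z\rangle .
\end{equation*}
Therefore $\mathcal{D}(x)=\langle x,\overline{\mathcal{D}(z)}\,z\rangle$. Setting $y=\overline{\mathcal{D}(z)}\,z$ gives the representation; the conjugate simply disappears when $\mathbb{K}=\mathbb{R}$.

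\textbf{Norm identity.} By Cauchy--Schwarz, $|\mathcal{D}(x)|\le\|x\|\,\|y\|$, so $\|\mathcal{D}\|\le\|y\|$. For the reverse inequality, I would evaluate $\mathcal{D}$ at $x=y$, which gives $\mathcal{D}(y)=\|y\|^2$; hence $\|\mathcal{D}\|\ge\|y\|$ whenever $y\ne0$. Together these give $\|\mathcal{D}\|=\|y\|$.

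The main obstacle is the existence of a nonzero vector in $N^{\perp}$. This is where completeness of $\mathcal{H}$ enters, via the minimising-sequence/parallelogram argument behind the projection theorem. I would quote that theorem rather than reprove it.
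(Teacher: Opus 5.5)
Your argument is correct. It is the standard orthogonal-projection proof:
\begin{itemize}
\item uniqueness by testing with $x=y_1-y_2$;
\item existence via a unit vector $z\in(\ker\mathcal{D})^{\perp}$ and $y=\overline{\mathcal{D}(z)}\,z$;
\item the norm identity via Cauchy--Schwarz and evaluation at $x=y$, with the case $y=0$ covered by $\mathcal{D}=0$.
\end{itemize}
You also correctly identify that completeness enters through the projection theorem.

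The paper gives no proof to compare against. It only recalls this classical theorem to justify the existence of the representing functions $\eta_i$ in Theorem~\ref{teoadxunto}.
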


In line with this objective, we arrive at the following result:
\begin{theorem}
	\label{teoadxunto}
	The adjoint boundary value problem corresponding to \eqref{modificado} can likewise be represented as a functional problem of the same type and is explicitly given by
	\begin{equation}
		\left\{
		\begin{aligned}
			K_n^{*} v(t) := L_n^{*} v(t)+\sum_{k=1}^{l}\eta_{k}(t)\mathcal{D}_{k}(v)&=0, \quad && t \in I, \\
			V_i^{*}(v) &= 0, \quad && i = 1, \ldots, n,
		\end{aligned}
		\right.
		\label{oadjunto}
	\end{equation}
	where
	\begin{equation}
		\left\{
		\begin{aligned}
			L_n^{*}v(t)&=0, \quad && t \in I, \\
			V_i^{*}(v)&= 0, \quad && i = 1, \ldots, n,
		\end{aligned}
		\right.
		\label{oadjuntosim}
	\end{equation}
	is the adjoint of Problem \eqref{modificado}, disregarding the functional term (Problem \eqref{partir}), and the functions $\mathcal{D}_{i}(u): \mathcal{H} \rightarrow \mathbb{R}$ and $\eta_{i} \in \mathcal{L}^1(I)$, $i=1, \ldots, l$, are deduced from the following expressions:
	\begin{equation}
		\label{defdi}
		\mathcal{D}_{i}(u)=\langle\gamma_{i},u \rangle
	\end{equation}
	and
	\begin{equation}
		\label{defetai}
		\overline{\mathcal{C}}_{i}(u)=\langle u,\eta_{i} \rangle,
	\end{equation}
	where $\langle \cdot,\cdot \rangle$ is the scalar product in $\mathcal{H}$.
\end{theorem}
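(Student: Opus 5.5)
The plan is to verify the defining identity of the adjoint directly: for every $u$ in the domain of $K_n$ (so $V_i(u)=0$) and every $v$ in the domain of the proposed operator (so $V_i^*(v)=0$), we want
\begin{equation*}
\langle K_n u, v\rangle = \langle u, K_n^{*} v\rangle .
\end{equation*}
We take $\mathcal{H}=\mathcal{L}^2(I)$ with $\langle f,g\rangle=\int_a^b f g$, so that the Riesz--Fréchet Representation Theorem applies to each $\overline{\mathcal{C}}_i$. The pairings below require $\gamma_i\in\mathcal{H}$, and we assume this as well. Splitting $K_n u = L_n u + \sum_k \gamma_k \overline{\mathcal{C}}_k(u)$ by linearity gives two pieces to handle separately.

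\textbf{Differential part.} For $\langle L_n u, v\rangle$ we invoke the classical theory recalled through Corollary \ref{coroadjoint}. The adjoint pair $(L_n^*,V_i^*)$ is exactly the one for which Lagrange's identity (Green's formula) has a boundary bilinear concomitant that vanishes whenever $V_i(u)=0$ and $V_i^*(v)=0$. Hence $\langle L_n u,v\rangle=\langle u, L_n^* v\rangle$. We take this as known, since Problem \eqref{oadjuntosim} is by definition the adjoint of \eqref{partir}.

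\textbf{Functional part.} This is the only new step. By Riesz--Fréchet choose $\eta_k\in\mathcal{H}$ with $\overline{\mathcal{C}}_k(u)=\langle u,\eta_k\rangle$, which is \eqref{defetai}. Since $\overline{\mathcal{C}}_k(u)$ is a scalar,
\begin{equation*}
\Big\langle \sum_k \gamma_k \overline{\mathcal{C}}_k(u), v\Big\rangle=\sum_k \overline{\mathcal{C}}_k(u)\langle \gamma_k, v\rangle=\sum_k \langle u,\eta_k\rangle\,\mathcal{D}_k(v)=\Big\langle u,\sum_k \eta_k \mathcal{D}_k(v)\Big\rangle ,
\end{equation*}
with $\mathcal{D}_k(v)=\langle\gamma_k,v\rangle$ as in \eqref{defdi}. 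Each $\mathcal{D}_k$ is linear and continuous on $\mathcal{H}$ by Cauchy--Schwarz. Adding the two parts gives $\langle K_n u,v\rangle=\langle u,K_n^*v\rangle$.

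\textbf{Conclusion and main obstacle.} The identity shows that $K_n^*$ is of the same form as $K_n$: the roles of $\gamma_k$ and the representer of $\overline{\mathcal{C}}_k$ are swapped. The boundary conditions, and hence the domain, are those of the unperturbed adjoint. The reason is that the perturbation is a bounded finite-rank operator, and adding such an operator does not change the domain of the adjoint. I expect this domain justification to be the main obstacle, rather than the computation itself. One has to check that the domain of the adjoint of $L_n+B$, with $B$ bounded, equals the domain of $L_n^*$ and that the adjoint is $L_n^*+B^*$. That is the standard perturbation fact $(A+B)^*=A^*+B^*$ for bounded $B$, which we will cite. A secondary point is regularity: $\eta_k$ is only guaranteed to lie in $\mathcal{L}^2(I)\subset\mathcal{L}^1(I)$, which suffices for the stated requirement $\eta_k\in\mathcal{L}^1(I)$.
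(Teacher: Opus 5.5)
Your proposal is correct and follows essentially the same route as the paper: you verify $\langle K_n u,v\rangle=\langle u,K_n^*v\rangle$ for $V_i(u)=0$, $V_i^*(v)=0$ by combining the classical identity $\langle L_n u,v\rangle=\langle u,L_n^*v\rangle$ with the Riesz--Fr\'echet swap $\langle \gamma_k\overline{\mathcal{C}}_k(u),v\rangle=\langle u,\eta_k\mathcal{D}_k(v)\rangle$. Your extra steps make explicit what the paper leaves implicit in its appeal to ``the uniqueness of the adjoint'': fixing $\mathcal{H}=\mathcal{L}^2(I)$, assuming $\gamma_k\in\mathcal{L}^2(I)$, and invoking $(A+B)^*=A^*+B^*$ for the bounded finite-rank perturbation to identify the domain.
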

\begin{proof}
	We will prove that $K_n^{*}$ is the adjoint of operator $K_{n}$. To this aim, we will see that
	\begin{equation*}
		\langle K_n u,v \rangle = \langle u, K_n^{*} v \rangle, \quad \forall u,v \in \mathcal{H} \textup{ such that }V_i(u)=0, V_i^*(v)=0, \, i=1, \ldots, n.
	\end{equation*}
	Using the definition of $\mathcal{D}_i$ given in \eqref{defdi} and the definition of $\eta_i$ given in \eqref{defetai}, whose existence is ensured by the Riesz-Fréchet Representation Theorem, we obtain 
	\begin{equation}
		\langle \gamma_{k}(\cdot) \overline{\mathcal{C}}_k(u), v \rangle =\langle u, \eta_k(\cdot) \mathcal{D}_k(v)\rangle, \quad \forall \, k=1, \ldots, l.
		\label{igualdadcd}
	\end{equation}	
	From which
	\begin{equation*}
		\sum_{k=1}^{l}{\langle \gamma_{k}(\cdot) \overline{\mathcal{C}}_k(u), v \rangle}=\sum_{k=1}^{l}{\langle u, \eta_k(\cdot) \mathcal{D}_k(v)\rangle }, \quad \forall \, k=1, \ldots, l.
	\end{equation*}
	On the other hand, it holds that
	\begin{equation*}
		\langle L_n u, v \rangle= \langle u, L_n^* v\rangle.
	\end{equation*}
	Finally, we obtain
	\begin{equation*}
		\langle L_n u+\sum_{k=1}^{l} \gamma_k(\cdot)\overline{\mathcal{C}}_k(u), v \rangle = \langle u, L_n^{*}v+\sum_{k=1}^{l}{\eta_k(\cdot)\mathcal{D}_k(v)} \rangle.
	\end{equation*}
	Hence, by the uniqueness of the adjoint, we conclude that it is given by \eqref{oadjunto}.
\end{proof}
\begin{remark}
	It is interesting to note that $\mathcal{D}_k(u)$ is the distribution generated by the function $\gamma_k$, and that $\eta_k$ coincides with the generating function of the distribution $\overline{\mathcal{C}}_k(u)$. When working in Hilbert spaces, we can ensure (by the Riesz-Fréchet theorem) that the distribution is regular and, therefore, there exists a unique generating function $\eta_k$.
\end{remark}
%

\begin{remark}
	\label{remarkadjoint}
	The functional part is self-adjoint if and only if the operator $\overline{\mathcal{C}}_k(u)$ satisfies the following condition:
	\begin{equation*}
		\overline{\mathcal{C}}_k(u)=\sum_{j=1}^l\beta_{kj} \langle \gamma_j,u \rangle, \quad k=1, \ldots, l,
	\end{equation*}
	where $\beta_{kj} \in \mathbb{R}$ and $\beta_{kj}=\beta_{jk}$ for all $k=1, \ldots, l$, $j=1, \ldots, l$.
	
	The functional part is self-adjoint if and only if
	\begin{equation*}
		\sum_{k=1}^l \overline{\mathcal{C}}_k(u) \langle \gamma_k,v \rangle= \sum_{j=1}^l \overline{\mathcal{C}}_j(v) \langle \gamma_j, u \rangle.
	\end{equation*}
	Defining $\Gamma_k(f)=\langle \gamma_k,f \rangle$, we have that
	\begin{equation*}
		\sum_{k=1}^l\overline{\mathcal{C}}_k(u) \Gamma_k(v)=\sum_{j=1}^l\overline{\mathcal{C}}_j(v) \Gamma_j(u).
	\end{equation*}
	We assume (without loss of generality) that the functions $\gamma_k$ are linearly independent, which implies that the functional $\Gamma_k$ are also linearly independent.
	
	Therefore, for the above equality to hold, it is necessary that
	\begin{equation*}
		\overline{\mathcal{C}}_k(v)=\sum_{j=1}^l \beta_{kj} \Gamma_{j}(v)=\sum_{j=1}^l \beta_{kj} \langle \gamma_j,v \rangle.
	\end{equation*}
	In this case,
	\begin{equation*}
		\sum_{k=1}^l \sum_{j=1}^l \beta_{kj} \Gamma_j(u) \Gamma_k(v)=\sum_{j=1}^l \sum_{k=1}^l \beta_{jk} \Gamma_{k}(v) \Gamma_{j}(u),
	\end{equation*}
	from which $\beta_{kj}=\beta_{jk}$ for all $j,k \in 1, \ldots,l$.
\end{remark}

Theorem \ref{teoadxunto} allows us to refine Proposition \ref{caracterizarh} by providing properties of the function $H(t, \cdot)$ in the case where the operators $\mathcal{C}_j: \mathcal{H} \rightarrow \mathbb{R}$ are linear and continuous with respect to the norm of a Hilbert space for all $j=1, \ldots, l$.

To this end, we define the following set:
\begin{equation*}
	D^*=\{t \in I: \eta_i(t) \textup{ is discontinuous for some }i \in \{1, \ldots, l\}\}.
\end{equation*}
\begin{proposition}
	The Green's function $H$ of Problem \eqref{proprin}, when $\mathcal{C}_j:\mathcal{H} \rightarrow \mathbb{R}$ are linear and continuous in $\mathcal{H}$ for all $j=1, \ldots, l$, also satisfies the following properties:
	\begin{itemize}
		\item $H(t, \cdot)$ is well-defined and $C^{n-2}$ for all $s \in I$, of class $C^{n-1}$ for all $s \in I$, $s \neq t$, and of class $C^n$ for all $s \in I$, $s \neq t$, $s \notin D^*$.
		\item For each $t \in \mathring{I}$, the function $H(t,\cdot)$ is the solution of the following differential equation:
		\begin{equation*}
			L_n^* H(t,\cdot)+\sum_{k=1}^{l} \eta_k(t)\mathcal{D}_k(H(t,\cdot))=0,
		\end{equation*}
		for all $s \in I$, $s \neq t$, $s \notin D^*$, where $\eta_k$ and $\mathcal{D}_k$ are given in Theorem \ref{teoadxunto}.
		\item For each $t \in \mathring{I}$, the function $H(t,\cdot)$ satisfies the boundary condition:
		\begin{equation*}
			V_i^*(H(t,\cdot))=0, \quad i=1, \ldots, n.
		\end{equation*}
	\end{itemize}
\end{proposition}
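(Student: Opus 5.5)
The plan is to deduce the properties of $H(t,\cdot)$ from those of the Green's function of the adjoint problem, combining Theorem \ref{teoadxunto} with the kernel-transposition principle behind Corollary \ref{coroadjoint}. More precisely, I will show that the Green's function $H^*$ of Problem \eqref{oadjunto} (with right-hand side $\sigma^*$) exists and satisfies $H^*(t,s)=H(s,t)$. Applying Proposition \ref{caracterizarh} to $H^*$ then gives the three bullets, with the roles of $t$ and $s$ interchanged.

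The first step is to note that $\mathcal{H}$-continuity of the $\mathcal{C}_j$ gives, through Theorem \ref{teoadxunto}, an adjoint problem of the same functional form. Its differential part is $L_n^*$ with conditions $V_i^*$, its perturbation coefficients are $\eta_k$, and its functionals are $\mathcal{D}_k(v)=\langle \gamma_k,v\rangle$. The $\mathcal{D}_k$ are continuous on $C(I)$ as well, because $\gamma_k\in\mathcal{L}^1(I)$.

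The second step is to check that this adjoint problem has a unique Green's function.
\begin{itemize}
\item By Corollary \ref{coroadjoint}, $L_n^*v=\sigma^*$, $V_i^*(v)=0$ has the unique Green's function $G^*(t,s)=G(s,t)$.
\item The matrix $A^*$ built from \eqref{defa} for the adjoint data has entries $a^*_{ij}=\int_a^b \mathcal{D}_i(G^*(\cdot,s))\eta_j(s)\,\mathrm{d}s=\int_a^b\int_a^b \gamma_i(r)G(s,r)\eta_j(s)\,\mathrm{d}r\,\mathrm{d}s=\int_a^b \mathcal{C}_j(G(\cdot,r))\gamma_i(r)\,\mathrm{d}r$, using Fubini and $\mathcal{C}_j(w)=\langle w,\eta_j\rangle$.
\item So $A^*=A^T$, and $\det(A^*+I_l)=\det(A+I_l)\neq0$.
\item Theorem \ref{teoprincipal} then applies to \eqref{oadjunto} and yields its Green's function $H^*$.
\end{itemize}

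The third step is to identify $H^*(t,s)=H(s,t)$. The cleanest route is the duality identity $\langle K_nu,v\rangle=\langle u,K_n^*v\rangle$ from the proof of Theorem \ref{teoadxunto}. Take $u=\int H(\cdot,s)\sigma(s)\,\mathrm{d}s$ and $v=\int H^*(\cdot,s)\sigma^*(s)\,\mathrm{d}s$. The identity gives $\iint H(t,s)\sigma(s)\sigma^*(t)=\iint H^*(s,t)\sigma(s)\sigma^*(t)$ for all $\sigma,\sigma^*$ in a dense class, hence $H^*(s,t)=H(t,s)$ almost everywhere. Equality holds everywhere off the diagonal, because both kernels are piecewise continuous in each variable by Proposition \ref{caracterizarh}. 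As a cross-check, one can verify the identity directly from the explicit formula \eqref{ecprincipal}. Transposing $G$ swaps the factors $\int G(t,r)\gamma_i(r)\,\mathrm{d}r$ and $\mathcal{C}_j(G(\cdot,s))=\int G(r,s)\eta_j(r)\,\mathrm{d}r$, while $\tilde A$ is replaced by $\tilde A^T$, so the double sum is invariant.

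The final step is to apply Proposition \ref{caracterizarh} to $H^*$, whose discontinuity set is now $D^*$ since the coefficients are the $\eta_k$. For each fixed $t$, $H(t,\cdot)=H^*(\cdot,t)$ inherits the stated regularity away from $s=t$ and $D^*$. It solves $L_n^*w+\sum_k\eta_k\mathcal{D}_k(w)=0$ for $s\neq t$ and satisfies $V_i^*(H(t,\cdot))=0$.

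The main obstacle is the regularity and functional-analytic setting. Proposition \ref{caracterizarh} needs $\eta_k\in\mathcal{L}^1(I)$ and continuous functionals, and the duality identity needs $u,v$ to lie in the appropriate domain inside $\mathcal{H}$. Taking $\mathcal{H}=\mathcal{L}^2(I)$ settles both points, since then $\eta_k\in\mathcal{L}^2\subset\mathcal{L}^1$, and restricting $\sigma,\sigma^*\in\mathcal{L}^2$ is enough for the density argument.
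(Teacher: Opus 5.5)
Your proposal is correct and follows the route the paper intends. The paper gives no separate proof: it relies on Theorem \ref{teoadxunto} to see the adjoint as a problem of the same type, on the transposition principle of Corollary \ref{coroadjoint}, and on Proposition \ref{caracterizarh} applied to the adjoint problem, and your checks that $A^*=A^T$ and that $H^*(t,s)=H(s,t)$ supply details the paper leaves implicit.

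One small refinement: take the pointwise identity off the diagonal from your direct computation with \eqref{ecprincipal}, using $\tilde A^*=\tilde A^T$ and $G^*(t,s)=G(s,t)$. Proposition \ref{caracterizarh} only gives continuity of $H$ in $t$ and of $H^*$ in its first variable. Separate continuity in different variables does not by itself turn an a.e.\ equality into a pointwise one, whereas the explicit-formula computation gives the identity directly.
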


As illustrative cases, we will consider the following examples:

\begin{example}
	We consider the integro-differential Problem \eqref{fredgen} with $K$ given by \eqref{expk}.
	From Theorem \ref{teoadxunto}, we have that
	\begin{equation*}
		\mathcal{D}_k(u)=\int_a^b{\phi_k(s)u(s) \mathrm{d}s} \, \textup{ and } \, \eta_k(t)=\xi_k(t), \, \textup{ for }\, k=1, \ldots, l.
	\end{equation*}
	From which, we deduce that the adjoint problem of \eqref{fredgen} (with $\sigma=0$) for a separable kernel is given by
	\begin{equation*}
		\left\{
		\begin{aligned}
			L^*_n v(t)+\int_a^b{K(s,t) v(s) \mathrm{d}s}&=0 , \quad && t \in I, \\
			V^*_i(v) &= 0, \quad && i = 1, \ldots, n.
		\end{aligned}
		\right.
		\label{integrodiferencialad}
	\end{equation*}
\end{example}
\begin{example}
	We consider a perturbation of Hill's equation given by the following equation:
	\begin{equation}
		\left\{
		\begin{aligned}
			K_n u(t):=u''(t)+a(t)u(t)+2t\int_a^b{su(s) \mathrm{d}s}+3e^{t}\int_a^b {e^s u(s) \mathrm{d}s}&+\int_a^bu(s) \mathrm{d}s=0 , && t \in I , \\
			u(a) = u(b),\, u'(a)&=u'(b),  && i = 1, \ldots, n,
		\end{aligned}
		\right.
		\label{hillperturbada}
	\end{equation}
	where $\phi_1(t)=2t$, $\xi_1(s)=s$, $\phi_2(t)=3e^t$, $\xi_2(s)=e^s$, $\phi_3(t)=1$ and $\xi_3(s)=1$.
	
	Taking into account that Hill's equation is self-adjoint and Remark \ref{remarkadjoint}, it is easy to see that Operator \eqref{hillperturbada} is self-adjoint too. 
%
\end{example}
In Problem \eqref{proprin}, the operators $\mathcal{C}_k:C(I) \rightarrow \mathbb{R}$ were considered. Although Theorem \ref{teoadxunto} cannot be applied because $C(I)$ is not a Hilbert space, it is still possible and worthwhile to calculate the adjoint in certain cases.

As an interesting particular case, we have equations with piecewise constant dependence where the operators $\mathcal{C}_k^{\alpha}$ are given by $\mathcal{C}_k^{\alpha}(u)=u^{(\alpha)}(t_k)$, $t_k \in I$, for all $k=1, \ldots, l$, $\alpha=0, \ldots n-1$. In this case, $\mathcal{C}_k^{\alpha}$ are not continuous with respect to the norm of $\mathcal{L}^2{(I)}$. Therefore, we cannot apply the Riesz-Fréchet Theorem. 

Nevertheless, the adjoint can still be calculated. To this end, we first recall the definition of the Dirac delta funcions and its distribution derivatives.

\begin{definition}
	\label{defdelta}
	The Dirac delta centered at $t_k$, denoted by
	$\delta(s-t_k)$, is the distribution defined by
	\begin{equation*}
	\label{defdelta}
	\left\langle \delta(s-t_k),\varphi(s)\right\rangle
	=\int_a^b \delta(s-t_k) \varphi(s) \mathrm{d}s=	\begin{cases}
		\varphi(t_k), & \text{if } a < t_k < b, \\
		0, & \text{if } t_k < a \text{ or } t_k > b,
	\end{cases}
	\end{equation*}
	for every test function $\varphi$. Its distributional derivatives
	are defined by
	\[
	\left\langle
	\delta^{(\alpha)}(s-t_k),\varphi(s)
	\right\rangle
	=\int_a^b \varphi(s)\delta^{(\alpha)}(s-t_k) \mathrm{d}s
	=\begin{cases}
		(-1)^\alpha \varphi^{(\alpha)}(t_k), & \text{if } a < t_k < b, \\
		0, & \text{if } t_k < a \text{ or } t_k > b,
	\end{cases}
	\]
	where, by convention, $\delta^{(0)}=\delta$, and $\langle \cdot, \cdot \rangle$ denotes the duality pairing.
	
\end{definition}

\begin{remark}
	From now on, it is important to note that $\langle \cdot, \cdot \rangle$ denotes the duality pairing, which in general does not coincide with the inner product in $\mathcal{L}^2(I)$, denoted by $\langle \cdot, \cdot \rangle_2$. The latter will be used in the following theorem.
\end{remark}

With this in hand, we present the following result.

\begin{theorem}
	\label{adjpiece2}
	The adjoint equation of 
	\begin{equation}
		\left\{
		\begin{aligned}
			K_n u(t):=L_n u(t) +\sum_{k=1}^l \sum_{\alpha=0}^{n-1} \gamma_k^{\alpha}(t) u^{(\alpha)}(t_k)&=0, \quad && t \in I,  \\
			V_i(u) &= 0, \quad && i = 1, \ldots, n,
		\end{aligned}
		\right.
		\label{adxunscomp}
	\end{equation}
	where $t_k \in I$, $k=1, \ldots, l$, and $V_{i}$ and $L_{n}$ are defined in \eqref{defv} and \eqref{defl}, respectively, with $a_j \in C^{n-j}(I)$, is given by
	\begin{equation}
		L_n^* v(t), \quad t \in I \setminus \{t_1, \ldots, t_l\},
		\label{adxuns2compa}
	\end{equation}
	with jump conditions for each $k=1, \ldots, l$ and $\alpha=0, \ldots,n-1$, $t_k \neq a$ and $t_k \neq b$
	\begin{equation}
		\sum_{j=\alpha+1}^{n} \sum_{r=0}^{j-1-\alpha} (-1)^{j-1-\alpha}
		\binom{j-1-\alpha}{r} a_{n-j}^{(j-1-\alpha-r)}(t_k) 
		\Delta v^{(r)}\big|_{t=t_k}
		= \int_a^b \gamma_k^\alpha(t)v(t)\,\mathrm{d}t,
		\label{adxuns2compb}
	\end{equation}
	and boundary conditions if $t_1 \neq a$ and $t_l \neq b$ given by
	\begin{equation*}
		V_i^*(v)=0, \quad i=1, \ldots, n,
	\end{equation*}
	or, in the general case,
	\begin{equation}
		\label{adxuns2compc}
		\begin{aligned}
	&\sum_{\alpha=0}^{n-1} \sum_{j=\alpha+1}^{n} \left[ (-1)^{j-1-\alpha}(a_{n-j}v)^{(j-1-\alpha)}(b)u^{(\alpha)}(b) - (-1)^{j-1-\alpha}(a_{n-j}v)^{(j-1-\alpha)}(a)u^{(\alpha)}(a) \right]\\
	& \quad +\begin{cases}
		\sum_{\alpha=0}^{n-1} u^{(\alpha)}(a) \int_a^b \gamma_1^\alpha(t)v(t)dt & \text{if } t_1=a \\
		0& \text{if } t_1 \neq a
	\end{cases}
	+\begin{cases}
		\sum_{\alpha=0}^{n-1} u^{(\alpha)}(b) \int_a^b \gamma_l^\alpha(t)v(t)dt & \text{if } t_l=b \\
		0& \text{if } t_l \neq b
	\end{cases}
	=0,
	\end{aligned}
	\end{equation}
	for all $u$ such that $V_i(u)=0$, $i=1, \ldots, n$.

	Here,
	\begin{equation*}
		\left\{
		\begin{aligned}
			L_n^{*}v(t)&= 0, \quad && t \in I, \\
			V_i^{*}(v) &= 0, \quad && i = 1, \ldots, n,
		\end{aligned}
		\right.
	\end{equation*}
	is the adjoint of Operator  \eqref{adxunscomp}, disregarding the functional term (Problem \eqref{partir}),
	and $\Delta v^{(\alpha)}\big|_{t=t_k}$ denotes the jump of $v^{(\alpha)}(t)$ in $t=t_k$, that is
	\begin{equation*}
		\Delta v^{(\alpha)}\big|_{t=t_k}=v^{(\alpha)}(t_k^+)-v^{(\alpha)}(t_k^-),
	\end{equation*}
	where $v^{(\alpha)}(t_k^-)$, $v^{(\alpha)}(t_k^+)$ are the left and the right-hand limits of $v^{(\alpha)}$ in $t=t_k$.
\end{theorem}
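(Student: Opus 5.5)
The plan is to compute $\langle K_n u, v\rangle_2$ for $u$ with $V_i(u)=0$ and $v$ piecewise smooth, with possible jumps in $v$ and its derivatives only at the points $t_1,\dots,t_l$, and to move every derivative from $u$ onto $v$. The adjoint is then read off from what is left after the integration by parts.

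The first step is to split $L_n u=\sum_{j=0}^{n} a_{n-j}u^{(j)}$, with $a_0\equiv 1$, and to integrate each term $\int_a^b a_{n-j}u^{(j)}v$ by parts $j$ times. I would do this separately on each subinterval $[t_{k},t_{k+1}]$, together with $[a,t_1]$ and $[t_l,b]$, because $v$ is only smooth there. Since $u\in C^{n-1}(I)$ and $a_{n-j}\in C^{n-j}(I)$, the repeated integration by parts gives
\[
\int a_{n-j}u^{(j)}v=\sum_{\alpha=0}^{j-1}(-1)^{j-1-\alpha}\Bigl[(a_{n-j}v)^{(j-1-\alpha)}u^{(\alpha)}\Bigr]+(-1)^j\int (a_{n-j}v)^{(j)}u,
\]
where the bracket is evaluated at the endpoints of the subinterval.

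The integral terms add up to $\langle u,L_n^*v\rangle_2$ on $I\setminus\{t_1,\dots,t_l\}$, which gives \eqref{adxuns2compa}. For the bracket terms I would exchange the order of summation to $\sum_{\alpha=0}^{n-1}\sum_{j=\alpha+1}^{n}$. At an interior point $t_k$ the left and right contributions combine into $-u^{(\alpha)}(t_k)\,\Delta (a_{n-j}v)^{(j-1-\alpha)}|_{t_k}$, because $u^{(\alpha)}$ and the coefficients $a_{n-j}$ are continuous there. Leibniz's rule then gives
\[
\Delta (a_{n-j}v)^{(m)}=\sum_{r=0}^{m}\binom{m}{r}a_{n-j}^{(m-r)}(t_k)\,\Delta v^{(r)}, \qquad m=j-1-\alpha,
\]
which is exactly the left-hand side of \eqref{adxuns2compb}. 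The terms at $a$ and $b$ produce the first line of \eqref{adxuns2compc}.

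Next I treat the functional term. Since $u^{(\alpha)}(t_k)$ is a scalar,
\[
\Bigl\langle \sum_k\sum_\alpha\gamma_k^\alpha u^{(\alpha)}(t_k),v\Bigr\rangle_2=\sum_k\sum_\alpha u^{(\alpha)}(t_k)\int_a^b\gamma_k^\alpha v .
\]
Requiring $\langle K_nu,v\rangle_2=\langle u,L_n^*v\rangle_2$ for every admissible $u$ means that all point terms must cancel. The values $u^{(\alpha)}(t_k)$ at interior points can be prescribed independently of the boundary conditions: take $u$ supported near $t_k$ with a chosen Taylor jet there. So for each $k$ and $\alpha$, the coefficient of $u^{(\alpha)}(t_k)$ must vanish, and this is the jump condition \eqref{adxuns2compb}. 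The sign convention should be checked at this point; the $-\Delta$ coming from the boundary terms moves to the other side as $+\int\gamma_k^\alpha v$ with the paper's orientation. Everything that remains at $a$ and $b$ is \eqref{adxuns2compc}, including the extra functional contributions when $t_1=a$ or $t_l=b$. Those contributions cannot be separated from the boundary data of $u$, so in that case they stay as part of the condition. When no $t_k$ is an endpoint, \eqref{adxuns2compc} is exactly the bilinear concomitant for $L_n$, and by the classical theory used in Corollary \ref{coroadjoint} it vanishes for all $u$ with $V_i(u)=0$ if and only if $V_i^*(v)=0$.

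I expect the main obstacle to be the bookkeeping in the exchange of summations and the Leibniz expansion of the jumps, including getting the signs $(-1)^{j-1-\alpha}$ and the orientation $v(t_k^+)-v(t_k^-)$ consistent. There is also a justification gap: I need to argue that the adjoint domain has to allow these jumps at all. I would handle this in the distributional sense of Definition \ref{defdelta}. The functional term is $u\mapsto \sum_\alpha\gamma_k^\alpha\langle (-1)^\alpha\delta^{(\alpha)}(\cdot-t_k),u\rangle$, so its formal adjoint contributes $\sum_\alpha(-1)^\alpha\delta^{(\alpha)}(t-t_k)\int\gamma_k^\alpha v$. Matching these singular parts against the distributional $L_n^*v$ of a piecewise smooth $v$ reproduces the same jump relations, which confirms the computation above.
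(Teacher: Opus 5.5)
Your proposal is correct and follows essentially the paper's route. You integrate $\langle L_n u, v\rangle_2$ by parts on each subinterval $(t_k,t_{k+1})$, collect the interior traces into jumps via Leibniz's rule, add $\sum u^{(\alpha)}(t_k)\int_a^b\gamma_k^\alpha v$ from the functional term, isolate each coefficient of $u^{(\alpha)}(t_k)$ with a $u$ whose jet is prescribed at a single interior $t_k$, and leave the endpoint terms as \eqref{adxuns2compc}. The only difference is the preliminary step: the paper shows that $K_n^*$ acts as $L_n^*$ off $\{t_1,\dots,t_l\}$ by applying $L_n^*$ in the second variable to the Green's function identity relating $H$ and $G$, whereas you posit piecewise regularity of $v$ and justify it through the distributional adjoint term $\sum_\alpha(-1)^\alpha\delta^{(\alpha)}(t-t_k)\int_a^b\gamma_k^\alpha v$, which amounts to the same thing.
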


\begin{proof}
	Let us verify that the adjoint operator of \eqref{adxunscomp} is given by \eqref{adxuns2compa}--\eqref{adxuns2compc}. 
	
	Let $G$ be the Green's function for problem \eqref{partir} and $H$ the Green's function for operator \eqref{adxunscomp}. 
	
	Using the expression \eqref{eccom} and applying $L_n^*$, with respect to the second variable, to both sides of the equality, we obtain, in the sense of distributions,
	\begin{equation*}
		\begin{aligned}
		L_n^* H(t, \cdot)&=L_n^*G(t, \cdot)-\sum_{k=1}^l\sum_{\alpha=0}^{n-1} L_n^*G^{(\alpha)}(t_k,\cdot)\int_a^b \gamma_k^\alpha(r)H(t,r) \mathrm{d}r \\
		&=\delta(s-t)-\sum_{k=1}^l \sum_{\alpha=0}^{n-1} \delta^{(\alpha)}(s-t_k)\int_a^b \gamma_k^\alpha(r)H(t,r) \mathrm{d}r.
		\end{aligned}
	\end{equation*}
	Hence,
	\begin{equation*}
		K_n^* z(t):=L_n^*z(t), \quad \textup{for all }t \in I \setminus \{t_1, \ldots, t_l\}.
	\end{equation*}
	We now characterize the domain $D(K_n^*)$ of the operator $K_n^*$. For this purpose, let $w,z \in \mathcal{L}^2(I)$ be arbitrary and define $u=Tw$ and $v=T^*z$ where $T$ and $T^*$ are the inverse operators of $K_n$ and $K_n^*$, respectively. We then have
	\begin{equation*}
		\langle T w, z\rangle_2= \langle w, T^*z \rangle_2=\langle L_n u,v \rangle_2 +\sum_{k=1}^{l}\sum_{\alpha=0}^{n-1} \langle \gamma_k^\alpha(\cdot) u^{(\alpha)}(t_k), v \rangle_2,
	\end{equation*}
	where $\langle \cdot, \cdot \rangle_2$ denotes the usual scalar product in $\mathcal{L}^2(I)$.
	
	Since $K_n^*$ is not defined at $t \in \{t_1, \ldots, t_l\}$, the inner product is given by
	\begin{equation*}
		\langle u,v \rangle_2= \int_a^{t_1} u(t)v(t) \mathrm{d}t+ \cdots+\int_{t_k}^{t_{k+1}}u(t)v(t) \mathrm{d}t+ \cdots+\int_{t_l}^{b} u(t)v(t) \mathrm{d}t.
	\end{equation*}
	
	We can write, as $u \in C^{n-1}(I)$, that
	\begin{equation*}
		\begin{aligned}
		\langle L_n u,v \rangle_2&=\int_a^b{L_n u(t)v(t)dt} \\
		&= \int_a^{t_1} L_nu(t)v(t) \mathrm{d}t+ \cdots+\int_{t_k}^{t_{k+1}}L_n u(t)v(t) \mathrm{d}t+ \cdots+\int_{t_l}^{b}L_n u(t)v(t) \mathrm{d}t.
		\end{aligned}
	\end{equation*}
	We descompose at the points $t_k$, $k=1, \ldots, l$, as these are the points at which the adjoint operator $K_n^*$ is not well defined and may exhibit discontinuities (jumps).

	Denoting $a=t_0$ and $b=t_{l+1}$, we have that:
	\begin{equation*}
		\int_a^b{L_n u(t)v(t)dt}=\sum_{k=0}^{l}\int_{t_{k}}^{t_{k+1}}L_n u(t)v(t) \mathrm{d}t.
	\end{equation*}
	Next, following \cite[Section 1.4]{cabada2014greens}, we obtain:
	\begin{equation*}
		\begin{aligned}
			\int_c^d a_{n-j}(t)u^{(j)}(t)v(t) \mathrm{d}t&=(-1)^j \int_c^d (a_{n-j}v)^{(j)}(t)u(t) \mathrm{d}t+\sum_{i=0}^{j-1}(-1)^{j-1-i}(a_{n-j}v)^{(j-1-i)}(d)u^{(i)}(d)\\
			& \quad -\sum_{i=0}^{j-1}(-1)^{j-1-i}(a_{n-j}v)^{(j-1-i)}(c)u^{(i)}(c).
		\end{aligned}
	\end{equation*}
	Applying this identity on each subinterval
	$(t_k,t_{k+1})$ and summing over $k=0,\ldots,l$, the integral terms
	are added together, while the boundary contributions at the interior
	points are collected in terms of the corresponding one-sided traces.
	Therefore,
	\begin{equation*}
		\begin{aligned}
			\int_a^b a_{n-j}(t)u^{(j)}(t)v(t) \mathrm{d}t&= \sum_{k=0}^l(-1)^j \int_{t_k}^{t_{k+1}} (a_{n-j}v)^{(j)}(t)u(t) \mathrm{d}t \\
			& \quad+\sum_{k=0}^l\sum_{i=0}^{j-1}(-1)^{j-1-i}(a_{n-j}v)^{(j-1-i)}(t)u^{(i)}(t)\big|_{t_{k}^+}^{t_{k+1}^-}.
		\end{aligned}
	\end{equation*}
	For simplicity in the notation, we denote $a_0(t)=1$. Taking this into account, we obtain that
	\begin{equation*}
		\begin{aligned}
			\int_a^b{L_n u(t)v(t) dt}&=\sum_{k=0}^l\sum_{j=0}^{n} \bigg[(-1)^j \int_{t_k}^{t_{k+1}} (a_{n-j}v)^{(j)}(t)u(t) \mathrm{d}t \bigg] \\
			& \quad +\sum_{j=1}^{n} \bigg[\sum_{k=0}^{l} \sum_{i=0}^{j-1}(-1)^{j-1-i}(a_{n-j}v)^{(j-1-i)}(t)u^{(i)}(t) \big|_{t_{k}^+}^{t_{k+1}^-} \bigg].
		\end{aligned}
	\end{equation*}
	Furthermore, using that
	\begin{equation}
		(a_{n-j}v)^{(j-1-i)}(t)=\sum_{r=0}^{j-1-i} \binom{j-1-i}{r}a_{n-j}^{(j-1-i-r)}(t)v^{(r)}(t), \quad t \neq t_k,
		\label{newton}
	\end{equation}
	we can rewrite the boundary contributions as follows. When summing
	over all the subintervals, the terms corresponding to each interior point $t_k$ combine through the difference between their left- and right-hand traces, whereas the contributions at the endpoints $a$ and $b$ remain separate. Since $u^{(i)}$ and the coefficients $a_{n-j}$ are continuous at $t_k$, the interior contributions can be expressed in terms of the jumps of $v$ and its derivatives. Thus,
	\begin{equation*}
		\begin{aligned}
			\int_a^b L_n u(t)v(t) \mathrm{d}t&=\sum_{k=0}^l\sum_{j=0}^{n} \bigg[(-1)^j \int_{t_k}^{t_{k+1}} (a_{n-j}v)^{(j)}(t)u(t) \mathrm{d}t \bigg]\\
			& \quad -\sum_{j=1}^{n} \bigg[\sum_{k=1}^{l} \sum_{i=0}^{j-1}\sum_{r=0}^{j-1-i}(-1)^{j-1-i}\binom{j-1-i}{r}a_{n-j}^{(j-1-i-r)}(t_k)\Delta v^{(r)} \big|_{t=t_k}u^{(i)}(t_k)\\
			&\quad  +\sum_{i=0}^{j-1}(-1)^{j-1-i}(a_{n-j}v)^{(j-1-i)}(b)u^{(i)}(b)-\sum_{i=0}^{j-1}(-1)^{j-1-i}(a_{n-j}v)^{(j-1-i)}(a)u^{(i)}(a) \bigg].
		\end{aligned}
	\end{equation*}
	As a consequence, we obtain
	\begin{equation*}
		\begin{aligned}
			\langle T w, z\rangle_2
			&= \sum_{k=0}^l\int_{t_k}^{t_{k+1}} u(t)L_n^*v(t)\mathrm{d}t  + \sum_{k=1}^{l}\sum_{\alpha=0}^{n-1} u^{\alpha} (t_k) \int_a^b \gamma_k^{\alpha} (t)v(t) \mathrm{d}t  \\
			& \quad - \sum_{j=1}^{n} \bigg[ \sum_{k=1}^{l} \sum_{i=0}^{j-1}\sum_{r=0}^{j-1-i}(-1)^{j-1-i}\binom{j-1-i}{r}a_{n-j}^{(j-1-i-r)}(t_k)\Delta v^{(r)}\big|_{t=t_k}u^{(i)}(t_k)\\
			&\quad  +\sum_{i=0}^{j-1}(-1)^{j-1-i}(a_{n-j}v)^{(j-1-i)}(b)u^{(i)}(b)-\sum_{i=0}^{j-1}(-1)^{j-1-i}(a_{n-j}v)^{(j-1-i)}(a)u^{(i)}(a) \bigg].
		\end{aligned}
	\end{equation*}
	From which, we deduce that
	\begin{equation}
		\label{exputil}
		\begin{aligned}
			\langle T w, z\rangle_2 &= \sum_{k=0}^l\int_{t_k}^{t_{k+1}} u(t)L_n^*v(t)\mathrm{d}t + \sum_{k=1}^{l}\sum_{\alpha=0}^{n-1} u^{\alpha} (t_k) \int_a^b \gamma_k^{\alpha} (t)v(t) \mathrm{d}t \\
			&\quad - \sum_{k=1}^{l} \sum_{\alpha=0}^{n-1} \sum_{j=\alpha+1}^{n}\sum_{r=0}^{j-1-\alpha}(-1)^{j-1-\alpha}\binom{j-1-\alpha}{r}a_{n-j}^{(j-1-\alpha-r)}(t_k)\Delta v^{(r)}\big|_{t=t_k}u^{(\alpha)}(t_k) \\
			&\quad + \sum_{\alpha=0}^{n-1} \sum_{j=\alpha+1}^{n} \left[ (-1)^{j-1-\alpha}(a_{n-j}v)^{(j-1-\alpha)}(b)u^{(\alpha)}(b) - (-1)^{j-1-\alpha}(a_{n-j}v)^{(j-1-\alpha)}(a)u^{(\alpha)}(a) \right].
		\end{aligned}
	\end{equation}
		Moreover, 
	\begin{equation*}
		\sum_{k=0}^l \int_{t_k}^{t_k+1}u(t) L_n^* v(t) \mathrm{d}t= \langle u, K_n^*v \rangle_2=\langle Tw,z \rangle_2.
	\end{equation*}
	It is important to observe that the first equality is valid because  $K_n^*$ is defined only for $t \in I \setminus \{t_1, \ldots, t_l\}$, so the scalar product takes that form.
	
	Therefore, necessarily
	\begin{equation*}
		\begin{aligned}
		&\sum_{k=1}^{l} \sum_{\alpha=0}^{n-1} \sum_{j=\alpha+1}^{n}\sum_{r=0}^{j-1-\alpha}u^{(\alpha)}(t_k) \left( (-1)^{j-\alpha}\binom{j-1-\alpha}{r}a_{n-j}^{(j-1-\alpha-r)}(t_k)\Delta v^{(r)}\big|_{t=t_k} +\int_a^b \gamma_k^\alpha(t)v(t) \mathrm{d}t\right) \\
		&\quad + \sum_{\alpha=0}^{n-1} \sum_{j=\alpha+1}^{n} \left[ (-1)^{j-1-\alpha}(a_{n-j}v)^{(j-1-\alpha)}(b)u^{(\alpha)}(b) - (-1)^{j-1-\alpha}(a_{n-j}v)^{(j-1-\alpha)}(a)u^{(\alpha)}(a) \right]=0,
		\end{aligned}
	\end{equation*}
	for all $u \in D(K_n)$. 
	
	Let $u \in D(K_n)$ and $v \in D(K_n^*)$ be arbitrary. We can always choose $u \in D(K_n)$ such that $u^{(\beta)}(t_p) \neq 0$ and  $u^{(\alpha)}(a)=u^{(\alpha)}(b)=u^{(\alpha)}(t_k)=0$ for all $\alpha=0, \ldots, n-1$ and $k=1, \ldots, l$ except for the pair $\alpha= \beta$ and $k=p$.  It follows that, if $t_k \neq a$ and $t_k \neq b$ for all $k=1, \ldots, l$, then we obtain the following jump conditions
	\begin{equation*}
		\sum_{j=\alpha+1}^{n} \sum_{r=0}^{j-1-\alpha} (-1)^{j-1-\alpha}
		\binom{j-1-\alpha}{r} a_{n-j}^{(j-1-\alpha-r)} 
		\Delta v^{(r)}\big|_{t=t_k}
		= \int_a^b \gamma_k^\alpha(t)v(t)\,\mathrm{d}t,
	\end{equation*}
	and the boundary condition given by
	\begin{equation*}
		 \sum_{\alpha=0}^{n-1} \sum_{j=\alpha+1}^{n} \left[ (-1)^{j-1-\alpha}(a_{n-j}v)^{(j-1-\alpha)}(b)u^{(\alpha)}(b) - (-1)^{j-1-\alpha}(a_{n-j}v)^{(j-1-\alpha)}(a)u^{(\alpha)}(a) \right]=0.
	\end{equation*}
	Thus, the adjoint is given by \eqref{adxuns2compa}--\eqref{adxuns2compb} with $V_i^*(v)=0$, $i=1, \ldots, n$.
	
	In the general case, the adjoint is given by \eqref{adxuns2compa}--\eqref{adxuns2compc}.
\end{proof}

\begin{remark}
	The preceding result connects two major types of equations that have been extensively studied in the literature: piecewise constant arguments equations and impulsive differential equations.
	
	Moreover, it also allows us to relate equations with piecewise constant arguments to equations with nonlocal boundary conditions.
	
\end{remark}

\begin{remark}	
It is also important to specify the function spaces in which the
solutions of the original and adjoint problems are sought, as well as their respective regularity.

For a given datum $\sigma\in L^1(I)$, let $u$ be the solution of

\[
K_nu=\sigma,
\qquad
V_i(u)=0,
\quad i=1,\ldots,n.
\]

Then
\[
u\in W^{n,1}(I).
\]

 Since, in dimension one,
\[
W^{n,1}(I)\hookrightarrow C^{n-1}(I),
\]
the function $u$ and its derivatives up to order $n-1$ are continuous
on $I$. In particular, the point evaluations \[
u^{(\alpha)}(t_k),
\qquad
\alpha=0,\ldots,n-1,
\]
are well defined.

By contrast, the solution of the adjoint problem has lower global
regularity. Let
\[
\mathcal{T}=\{t_1,\ldots,t_l\},
\qquad
a=t_0<t_1<\cdots<t_l<t_{l+1}=b.
\]
We define the space
\[
PC_{\mathcal{T}}^{n-1}(I)
:=
\left\{
v\in L^1(I):
v|_{(t_k,t_{k+1})}\in W^{n,1}(t_k,t_{k+1}),
\quad k=0,\ldots,l
\right\}.
\]
The solution $v$ of the adjoint problem belongs to
$PC_{\mathcal{T}}^{n-1}(I)$. Therefore, for every
$r=0,\ldots,n-1$, the one-sided traces
\[
v^{(r)}(t_k^-)
\qquad\text{and}\qquad
v^{(r)}(t_k^+)
\]
are well defined. Nevertheless, $v$ and its derivatives up to order
$n-1$ may exhibit jump discontinuities at the points of
$\mathcal{T}$. Thus, $v$ has $W^{n,1}$-regularity on each interval $(t_k,t_{k+1})$, although it is only piecewise regular on the whole interval $I$.

This difference in regularity is reflected in the Green's function.
Indeed, if $H$ denotes the Green's function associated with the
original problem, then, for each fixed $s\in I$, the mapping
\[
t\longmapsto H(t,s)
\]
has greater regularity than, for each fixed $t\in I$, the mapping
\[
s\longmapsto H(t,s).
\]
An example illustrating this difference in regularity can be found in
\cite[Proposition~9]{cabada2025reflection}.

This difference is also reflected in the corresponding integral representations. Indeed, in the representation of the solution to the original problem,
\[u(t)=\int_a^b H(t,s) \sigma(s) \,ds,\]
$t$ is the free variable, and hence the higher regularity of the kernel with respect to $t$ is transferred to $u$. By contrast, in the representation of the solution to the adjoint problem,
\[v(s)=\int_a^b H(t,s) \sigma^*(t) \, dt,\]
$s$ is the free variable, so $v$ inherits the lower, piecewise regularity of the kernel with respect to $s$.

\end{remark}
As a particular and interesting case, we may consider the situation where $a_j$, $j=1, \ldots, n$, are constants. In this case, we obtain the following corollary.
\begin{corollary}
	The adjoint equation of
	\begin{equation*}
		\left\{
		\begin{aligned}
			K_n^cu(t):=L_n^c u(t) +\sum_{k=1}^l \sum_{\alpha=0}^{n-1} \gamma_k^{\alpha}(t) u^{(\alpha)}(t_k)&=0, \quad && t \in I,\, t_k \in I,\, k=1, \ldots, l, \\
			V_i(u) &= 0, \quad && i = 1, \ldots, n,
		\end{aligned}
		\right.
	\end{equation*}
	where 
	\begin{equation*}
		L_{n}^cu(t) \equiv u^{(n)}(t)+a_{1}u^{(n-1)}(t)+\cdots+a_{n-1}u'(t)+a_nu(t), \, t \in I,
	\end{equation*}
	with $a_i$ constants $i=1, \ldots, n$ is given by
	\begin{equation*}
		L_n^{c*} v(t), \quad  t \in I\setminus \{t_1, \ldots, t_l\},
	\end{equation*}
	with jump conditions for each $k=1, \ldots, l$ and $\alpha=0, \ldots, n-1$, $t_k \neq a$ and $t_k \neq b$
	\begin{equation*}
		\sum_{j=\alpha+1}^{n} (-1)^{j-1-\alpha} a_{n-j} \Delta v^{(j-1-\alpha)}\big|_{t=t_k}=\int_a^b{\gamma_k^\alpha(t)v(t) \mathrm{d}t},
	\end{equation*}
	and boundary conditions if $t_1 \neq a$ and $t_l \neq b$ given by
	\begin{equation*}
		V_i^*(v)=0,
	\end{equation*}
	or, in general,
	\begin{equation*}
		\begin{aligned}
			&\sum_{\alpha=0}^{n-1} \sum_{j=\alpha+1}^{n} \left[ (-1)^{j-1-\alpha}a_{n-j}v^{(j-1-\alpha)}(b)u^{(\alpha)}(b) - (-1)^{j-1-\alpha}a_{n-j}v^{(j-1-\alpha)}(a)u^{(\alpha)}(a) \right]\\
			& \quad +\begin{cases}
				\sum_{\alpha=0}^{n-1} u^{(\alpha)}(a) \int_a^b \gamma_1^\alpha(t)v(t)dt & \text{if } t_1=a \\
				0& \text{if } t_1 \neq a
			\end{cases}
			+\begin{cases}
				\sum_{\alpha=0}^{n-1} u^{(\alpha)}(b) \int_a^b \gamma_l^\alpha(t)v(t)dt & \text{if } t_l=b \\
				0& \text{if } t_l \neq b
			\end{cases}
			=0,
		\end{aligned}
	\end{equation*}
	for all $u$ such that $V_i(u)=0$, $i=1, \ldots, n$.
	
\end{corollary}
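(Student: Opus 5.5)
The plan is to obtain this corollary as a direct specialization of Theorem \ref{adjpiece2}, taking the coefficients $a_j$ to be constants. Constant coefficients trivially satisfy the regularity hypothesis $a_j \in C^{n-j}(I)$ required there, so the theorem applies. Its conclusions \eqref{adxuns2compa}--\eqref{adxuns2compc} then hold verbatim with $L_n$ replaced by $L_n^c$. The only work is to simplify the jump conditions \eqref{adxuns2compb} and the general boundary condition \eqref{adxuns2compc} using that all derivatives of the coefficients vanish.

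First, for the jump conditions, I will look at the inner sum
\[
\sum_{r=0}^{j-1-\alpha} (-1)^{j-1-\alpha}\binom{j-1-\alpha}{r} a_{n-j}^{(j-1-\alpha-r)}(t_k)\,\Delta v^{(r)}\big|_{t=t_k}.
\]
Since $a_{n-j}$ is constant, $a_{n-j}^{(m)}\equiv 0$ for every $m\geq 1$. Hence only the term with $r=j-1-\alpha$ survives. Its binomial coefficient equals $1$ and it contributes $a_{n-j}$ itself (with the convention $a_0=1$). This gives exactly
\[
\sum_{j=\alpha+1}^{n}(-1)^{j-1-\alpha}a_{n-j}\,\Delta v^{(j-1-\alpha)}\big|_{t=t_k}=\int_a^b\gamma_k^\alpha(t)v(t)\,\mathrm{d}t .
\]
Equivalently, this follows from the Leibniz formula \eqref{newton}, which collapses to $(a_{n-j}v)^{(m)}=a_{n-j}v^{(m)}$.

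Second, in \eqref{adxuns2compc} I will substitute $(a_{n-j}v)^{(j-1-\alpha)}(c)=a_{n-j}v^{(j-1-\alpha)}(c)$ for $c=a,b$. The endpoint terms involving $t_1=a$ or $t_l=b$ do not involve the coefficients and remain unchanged. In the case $t_1\neq a$ and $t_l\neq b$, the conclusion $V_i^*(v)=0$ is inherited directly. Here $V_i^*$ and $L_n^{c*}$ denote the classical adjoint of the constant-coefficient problem, namely $L_n^{c*}v=\sum_j(-1)^j a_{n-j}v^{(j)}$.

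There is no real obstacle. The only point needing care is the bookkeeping in the binomial sum. One must check that the surviving index $r=j-1-\alpha$ lies in the range $0\le r\le j-1-\alpha$, which holds since $j\ge\alpha+1$. One must also remember the convention $a_0=1$ for the $j=n$ term.
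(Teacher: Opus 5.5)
Your proposal is correct and follows the paper's route. The paper states this corollary without a separate proof, as the constant-coefficient case of Theorem \ref{adjpiece2}. There, since $a_{n-j}^{(m)}\equiv 0$ for $m\ge 1$, only the term $r=j-1-\alpha$ survives in each binomial sum, and $(a_{n-j}v)^{(j-1-\alpha)}=a_{n-j}v^{(j-1-\alpha)}$ in the boundary terms, which is exactly the bookkeeping you carry out.
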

Another useful particular case is to consider $\gamma_k^{\alpha}=0$ for all $\alpha=1, \ldots, n-1$ and $k=1, \ldots n$. That is, in the functional part of the equation we only consider $C_k^0(u)=u(t_k)$, for $k=1, \ldots, l$. In this case, we obtain the following result.
\begin{corollary}
	\label{adjuntocasosimple}
		The adjoint equation of 
	\begin{equation}
		\label{problemasimple}
		\left\{
		\begin{aligned}
			L_n u(t) +\sum_{k=1}^l  \gamma_k(t) u(t_k)&=0, \quad && t \in I, \, t_k \in I,\, k=1, \ldots, l, \\
			V_i(u) &= 0, \quad && i = 1, \ldots, n,
		\end{aligned}
		\right.
	\end{equation}
	is given by
	\begin{equation}
		\label{adjuntoproblemasimplea}
		L_n^* v(t), \quad  t \in I \setminus \{t_1, \ldots, t_l\},
	\end{equation}
	with jump conditions for $k=1, \ldots, l$, $t_k \neq a$ and $t_k \neq b$
	\begin{equation}
		\label{adjuntoproblemasimpleb}
		\Delta v^{(n-1)}\big|_{t=t_k}=(-1)^{n+1}\int_a^b{\gamma_k^0(t)v(t) \mathrm{d}t},
	\end{equation}
	and boundary conditions if $t_1 \neq a$ and $t_l \neq b$
	\begin{equation*}
		V_i^*(v)=0, \quad i=1, \ldots, n,
	\end{equation*}
	or, in general,
	\begin{equation}
		\label{adjuntoproblemasimplec}
		\begin{aligned}
			&\sum_{\alpha=0}^{n-1} \sum_{j=\alpha+1}^{n} \left[ (-1)^{j-1-\alpha}(a_{n-j}v)^{(j-1-\alpha)}(b)u^{(\alpha)}(b) - (-1)^{j-1-\alpha}(a_{n-j}v)^{(j-1-\alpha)}(a)u^{(\alpha)}(a) \right]\\
			& \quad +\begin{cases}
				u(a) \int_a^b \gamma_1^0(t)v(t)dt & \text{if } t_1=a \\
				0& \text{if } t_1 \neq a
			\end{cases}
			+\begin{cases}
				u(b) \int_a^b \gamma_l^0(t)v(t)dt & \text{if } t_l=b \\
				0& \text{if } t_l \neq b
			\end{cases}
			=0,
		\end{aligned}
	\end{equation}
	for all $u$ such that $V_i(u)=0$, $i=1, \ldots, n$.

\end{corollary}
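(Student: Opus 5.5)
This is a direct specialization of Theorem \ref{adjpiece2}. I would set $\gamma_k^{\alpha}\equiv 0$ for every $\alpha=1,\ldots,n-1$ and keep only $\gamma_k^0=\gamma_k$. With this choice the problem \eqref{adxunscomp} reduces exactly to \eqref{problemasimple}. Then:
\begin{itemize}
\item the operator part \eqref{adxuns2compa} becomes \eqref{adjuntoproblemasimplea} with no change;
\item the boundary condition \eqref{adxuns2compc} becomes \eqref{adjuntoproblemasimplec}, because the terms with $\alpha\geq 1$ in the $t_1=a$ and $t_l=b$ contributions vanish.
\end{itemize}
The case $t_1\neq a$, $t_l\neq b$ again gives $V_i^*(v)=0$, as in the theorem.

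The real work is the jump conditions \eqref{adxuns2compb}. For $\alpha\geq 1$ the right-hand side is $0$, so at each $t_k$ we have the homogeneous relations
\[
\sum_{j=\alpha+1}^{n} \sum_{r=0}^{j-1-\alpha} (-1)^{j-1-\alpha}\binom{j-1-\alpha}{r} a_{n-j}^{(j-1-\alpha-r)}(t_k)\,\Delta v^{(r)}\big|_{t=t_k}=0,\quad \alpha=1,\ldots,n-1 .
\]
For $\alpha=0$ the right-hand side is $\int_a^b\gamma_k(t)v(t)\,\mathrm{d}t$.

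I would solve this system by downward induction on $\alpha$, showing $\Delta v^{(r)}|_{t_k}=0$ for $r=0,\ldots,n-2$.

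\begin{itemize}
\item \emph{Base case $\alpha=n-1$.} Only $j=n$, $r=0$ appear, and $a_0=1$. The relation reads $\Delta v|_{t_k}=0$.
\item \emph{Step for general $\alpha\geq 1$.} The highest jump index is $r=n-1-\alpha$, reached only for $j=n$ with $r=j-1-\alpha$. Its coefficient is $(-1)^{n-1-\alpha}a_0=\pm1$. Every other term carries $\Delta v^{(r)}$ with $r<n-1-\alpha$, which already vanishes by the previous steps. Hence $\Delta v^{(n-1-\alpha)}|_{t_k}=0$.
\end{itemize}
The system is triangular with unit diagonal, which is why the induction closes.

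Finally, take $\alpha=0$. The same reasoning kills every term except the one with $j=n$, $r=n-1$. This gives
\[
(-1)^{n-1}\Delta v^{(n-1)}\big|_{t_k}=\int_a^b\gamma_k(t)v(t)\,\mathrm{d}t .
\]
Multiplying by $(-1)^{n-1}$, and using $(-1)^{n-1}=(-1)^{n+1}$, yields \eqref{adjuntoproblemasimpleb}.

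The only delicate point is the bookkeeping in this triangular elimination, namely that the leading coefficient is exactly $a_0=1$. The continuity of $v,\ldots,v^{(n-2)}$ at $t_k$ is implicitly part of the conclusion, since the jump condition in the statement records only the surviving jump.
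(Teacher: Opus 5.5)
Your proposal is correct and follows the paper's route. Like the paper, you specialize Theorem~\ref{adjpiece2} with $\gamma_k^\alpha\equiv 0$ for $\alpha\geq 1$, use the homogeneous jump relations for $\alpha=1,\ldots,n-1$ to force $\Delta v^{(r)}\big|_{t=t_k}=0$ for $r=0,\ldots,n-2$, and read off the surviving $\alpha=0$ relation. The only difference is that you spell out the triangular elimination, with leading coefficient $(-1)^{n-1-\alpha}a_0=\pm 1$, which the paper asserts with the single word ``necessarily''.
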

\begin{proof}
	Following Theorem \ref{adjpiece2}, it is easy to see that the adjoint problem satisfies:
	\begin{equation*}
		L_n^*v(t), \quad t \in I \setminus \{t_1, \ldots, t_l\},
	\end{equation*}
	with jump conditions for $t_k$, $k=1, \ldots, l$, $t_k \neq a$ and $t_k \neq b$
	\begin{equation*}
		\sum_{j=1}^n \sum_{r=0}^{j-1} (-1)^{j-1} \binom{j-1}{r} a_{n-j}^{(j-1-r)} \Delta v^{(r))}\big|_{t=t_k}=\int_a^b{\gamma_k^0(t)v(t) \mathrm{d}t}.
	\end{equation*}
	However, since 
	\begin{equation*}
		\sum_{j=\alpha+1}^{n}\sum_{r=0}^{j-1-\alpha} (-1)^{j-1-\alpha} \binom{j-1-\alpha}{r} a_{n-j}^{(j-1-\alpha-r)} \Delta v^{(r)}\big|_{t=t_k}=0,
	\end{equation*}
	for all $\alpha \neq 0$, $\alpha=1, \ldots, n-1$, necessarily
	\begin{equation*}
		\Delta v^{(r)}\big|_{t=t_k}=0, \, \textup{ for all }r=0, \ldots n-2,
	\end{equation*}
	which completes the proof of the result.
\end{proof}
\begin{remark}
	\label{validofuncional}
	It is important to note that, although in Theorems \ref{teoadxunto} and \ref{adjpiece2} we have, for simplicity in the computations, considered the ordinary differential operator $L_n$ given by \eqref{defl}, these results can be readily extended to a more general operator. For instance, we could develop the results by considering the operator $\overline{L}_n$ given by
	\begin{equation}
		\overline{L}_n u(t) \equiv u^{(n)}(t)(\beta_0(t))+\overline{a}_1(t)u^{(n-1)}(\beta_1(t))+ \ldots+ \overline{a}_{n-1}(t)u'(\beta_{n-1}(t))+\overline{a}_{n}(t)u(\beta_n(t)), \, t\in I,
	\end{equation}
	with $\overline{a}_k \in C^{n-k}(I)$ and $\beta_k$ are orientation-preserving diffeomorphisms for all $k=1, \ldots, n$.

	This formulation can be very useful, as it allows us to extend the analysis to equations such as pantograph equations, equations with involution, or delay differential equations. In Section \ref{ejemploreflection}, we present an example of this.
\end{remark}

From Theorems \ref{teoadxunto} and \ref{adjpiece2} and Corollary \ref{coroadjoint}, we can deduce an alternative formulation for expressing impulsive and some nonlocal boundary differential equations. The operators $C_k^\alpha(u)=u^{(\alpha)}(t_k)$ are not continuous in the $\mathcal{L}^2$ norm. In the framework of distribution theory, they are regarded as a singular distribution, that is, a distribution that is not defined by a locally integrable function.

However, in the distributional framework, we can work directly with the generalized Dirac delta function $\delta$ defined in \ref{defdelta}, which can be obtained as the limit, in the sense of distributions, of the following sequence of Gaussian functions $\Delta_n(t)$:
\begin{equation}
	\Delta_n(t)=\sqrt{\frac{n}{2\pi}}\exp(-nt^2).
\end{equation}
%
%
%
Consequently, taking into account Definition \ref{defdelta}, we can consider, in a generalized sense, the functions $\eta_k(t)$ defined by expression \eqref{defetai} of Theorem \ref{teoadxunto} as $\eta_k^\alpha(t)=\delta^{(\alpha)}(t-t_k)$.

Therefore, we deduce that the adjoint of \eqref{adxunscomp} is given by
\begin{equation}
	\left\{
	\begin{aligned}
		K_n^*:=L_n^* v(t) +\sum_{k=1}^l \sum_{\alpha=0}^{n-1} (-1)^{\alpha}\delta^{(\alpha)}(t-t_k)\int_a^b \gamma_k^\alpha(s)v(s) \mathrm{d}s &=0, \quad && t \in I, \\
		V_i^*(v) &= 0, \quad && i = 1, \ldots, n.
	\end{aligned}
	\right.
	\label{otraformadificil}
\end{equation}
The previous equality is understood in the weak sense specified above.
Note that in this case, the first equality is defined for all $t \in I$, including the set $\{t_1, \ldots, t_l\}$.

\begin{remark}
We will see that the terms
\begin{equation*}
	\sum_{k=1}^l \sum_{\alpha=1}^{n-1}(-1)^{\alpha} \delta^{(\alpha)}(t-t_k)\int_a^b \gamma_k^\alpha(s)v(s) \mathrm{d}s
\end{equation*}
may introduce jumps in $v$ and in its derivatives at the points $t=t_k$. If any of the $t_k$ coincides with one of the endpoints of the intervals, that is, if $t_1=a$ or $t_l=b$, then the boundary conditions would have to be modified.  

In this case, if we want to retain the conditions
$V_i^*(v)=0$, the endpoint values involved in these conditions must be interpreted in terms of the exterior traces at $a^-$ and $b^+$. Thus, when $t_1=a$ or $t_l=b$, the corresponding distributional term may be regarded as producing a jump at $a$ or $b$, respectively. This interpretation requires extending $v$ beyond $[a,b]$, or equivalently introducing formal exterior traces.

When restricted to the open interval $(a,b)$, both formulations yield the same interior differential equation. Moreover, after identifying the endpoint jumps with the corresponding modified boundary terms, they give equivalent boundary value problems.
\end{remark}

%
%
%
%
%
%
%

\begin{definition}
	Let
	\[
	\mathcal{U}
	:=
	\left\{
	u\in C^n(I) :
	V_i(u)=0,\quad i=1,\ldots,n
	\right\}.
	\]
	
	A function
	\[
	v\in PC_{\mathcal{T}}^{n-1}(I)
	\]
	is said to be a weak adjoint solution of
	\eqref{otraformadificil} if
	\[
	\langle K_n^*v,u\rangle=0
	\qquad
	\text{for every }u\in\mathcal{U},
	\]
	where the duality pairing is understood on the closed interval
	$I$.
	
	On the other hand, a function
	\[
	v\in PC_{\mathcal{T}}^{n-1}(I)
	\]
	is said to be a piecewise strong solution of
	\eqref{adxuns2compa}--\eqref{adxuns2compc} if
	$L_n^*v=0$ almost everywhere on each open subinterval determined
	by the points of $\mathcal{T}$ and $v$ satisfies the corresponding
	jump and boundary conditions.
\end{definition}

\begin{remark}
	The use of the space $\mathcal{U}$ is essential for retaining the
	boundary information. Indeed, test functions in
	$C_c^\infty((a,b))\subset\mathcal{U}$ recover the distributional
	equation on each open subinterval and the jump conditions at the
	interior points. By contrast, functions in $\mathcal{U}$ that do
	not necessarily vanish at the endpoints retain the boundary terms
	appearing in \eqref{adxuns2compc}.
\end{remark}

Hence, operators \eqref{adxuns2compa}--\eqref{adxuns2compc} and \eqref{otraformadificil} must be equivalent. We now verify that this is indeed so.
\begin{proposition}
	Operators \eqref{adxuns2compa}--\eqref{adxuns2compc} and \eqref{otraformadificil} are equivalent in the weak distributional sense.
\end{proposition}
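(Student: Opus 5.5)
The plan is to show that, for $v\in PC_{\mathcal{T}}^{n-1}(I)$, $v$ is a weak adjoint solution of \eqref{otraformadificil} if and only if it is a piecewise strong solution of \eqref{adxuns2compa}--\eqref{adxuns2compc}. Both directions will come from a single identity: for every $u\in\mathcal{U}$ we compute $\langle K_n^*v,u\rangle$ explicitly.

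\textbf{Step 1: the distributional pairing.} On each subinterval $(t_k,t_{k+1})$ we have $v\in W^{n,1}$. We apply the piecewise integration by parts already carried out in the proof of Theorem \ref{adjpiece2}, including the Leibniz expansion \eqref{newton}; this is legitimate because $a_j\in C^{n-j}(I)$ and $u\in C^n(I)$. This turns the distribution $L_n^*v$, paired with $u$ on the closed interval $I$, into three pieces:
\begin{itemize}
\item the regular part $\sum_{k}\int_{t_k}^{t_{k+1}} u\,L_n^*v$;
\item interior singular terms
\[
-\sum_{k=1}^l\sum_{\alpha=0}^{n-1} u^{(\alpha)}(t_k)\sum_{j=\alpha+1}^{n}\sum_{r=0}^{j-1-\alpha}(-1)^{j-1-\alpha}\binom{j-1-\alpha}{r}a_{n-j}^{(j-1-\alpha-r)}(t_k)\Delta v^{(r)}\big|_{t=t_k},
\]
which are exactly the jump contributions appearing in \eqref{exputil};
\item endpoint terms at $a$ and $b$, as in \eqref{adxuns2compc}.
\end{itemize}
Next, Definition \ref{defdelta} gives
\[
\left\langle (-1)^\alpha\delta^{(\alpha)}(\cdot-t_k),u\right\rangle=(-1)^\alpha(-1)^\alpha u^{(\alpha)}(t_k)=u^{(\alpha)}(t_k).
\]
So the functional part of \eqref{otraformadificil} contributes $\sum_{k,\alpha}u^{(\alpha)}(t_k)\int_a^b\gamma_k^\alpha v$. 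Adding everything, $\langle K_n^*v,u\rangle$ equals the right-hand side of \eqref{exputil}.

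\textbf{Step 2: strong implies weak.} Assume $v$ is a piecewise strong solution. Then the regular part vanishes because $L_n^*v=0$ a.e. on each subinterval. Each coefficient of $u^{(\alpha)}(t_k)$ with $t_k$ interior vanishes by the jump conditions \eqref{adxuns2compb}. The remaining endpoint terms, together with the contributions of $t_1=a$ or $t_l=b$ when present, vanish by \eqref{adxuns2compc}, or by $V_i^*(v)=0$ when there are no endpoint nodes. Hence $\langle K_n^*v,u\rangle=0$ for all $u\in\mathcal{U}$.

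\textbf{Step 3: weak implies strong.} Assume $\langle K_n^*v,u\rangle=0$ for every $u\in\mathcal{U}$. We peel off the conditions one at a time.
\begin{itemize}
\item Take $u\in C_c^\infty((t_k,t_{k+1}))\subset\mathcal{U}$. This gives $L_n^*v=0$ in the distributional sense on each open subinterval, hence a.e. there.
\item The identity then reduces to a linear form in the finitely many values $u^{(\alpha)}(t_k)$ and $u^{(\alpha)}(a)$, $u^{(\alpha)}(b)$. Choose $u\in C_c^\infty((a,b))$ with prescribed jet at an interior $t_p$ (a bump times a polynomial) and vanishing jets at the other nodes. Such a $u$ vanishes near $a,b$, so $V_i(u)=0$ automatically. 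Since the order-$\beta$ jet value at $t_p$ is arbitrary, each coefficient vanishes, which yields \eqref{adxuns2compb}.
\item What remains is exactly \eqref{adxuns2compc} for all $u\in\mathcal{U}$. When no $t_k$ is an endpoint, this is the defining property of $V_i^*$, following the standard construction in \cite[Section 1.4]{cabada2014greens}.
\end{itemize}

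\textbf{Main obstacle.} I expect the difficulty to lie in the endpoint case $t_1=a$ or $t_l=b$. There $\delta^{(\alpha)}$ sits at the boundary, and Definition \ref{defdelta} as written assigns it zero. I would resolve this as in the preceding remark: interpret the pairing on the closed interval $I$, using exterior traces at $a^-$ and $b^+$. The delta term then contributes exactly $u^{(\alpha)}(a)\int_a^b\gamma_1^\alpha v$, respectively $u^{(\alpha)}(b)\int_a^b\gamma_l^\alpha v$, matching the extra cases in \eqref{adxuns2compc}. With that convention, Steps 2--3 go through unchanged.
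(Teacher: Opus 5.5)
Your proposal is correct and follows essentially the same route as the paper's proof. The pairing $\langle K_n^*v,u\rangle$ is expanded via \eqref{exputil} plus the Dirac contributions, strong $\Rightarrow$ weak follows by cancellation, and weak $\Rightarrow$ strong follows by testing in turn with functions supported in the open subintervals, then functions localized near an interior $t_k$ with prescribed jet, and finally arbitrary $u\in\mathcal{U}$ to recover \eqref{adxuns2compc}; the one imprecision is that Definition~\ref{defdelta} does not cover the case $t_k\in\{a,b\}$ rather than assigning zero there, but the paper handles it as you do, by pairing on the closed interval so that the endpoint delta contributes $u^{(\alpha)}(a)\int_a^b\gamma_1^\alpha v$ or $u^{(\alpha)}(b)\int_a^b\gamma_l^\alpha v$, with exterior traces needed only if one insists on keeping $V_i^*(v)=0$ in place of \eqref{adxuns2compc}.
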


\begin{proof}
	We will prove that the weak adjoint formulation
	\[
	\langle K_n^*v,u\rangle=0
	\qquad
	\text{for every }u\in\mathcal{U}
	\]
	is equivalent to
	\eqref{adxuns2compa}--\eqref{adxuns2compc}.
	
	Let first $v$ be a piecewise strong solution of
	\eqref{adxuns2compa}--\eqref{adxuns2compc}, and let
	$u\in\mathcal{U}$. By the definition of $K_n^*$ and the Definition \eqref{defdelta}, we have
	\begin{equation}
		\begin{aligned}
			\langle K_n^*v,u\rangle
			=
			\int_a^b L_nu(t)v(t)\,\mathrm{d}t+
			\sum_{k=1}^{l}\sum_{\alpha=0}^{n-1}
			\left(
			\int_a^b\gamma_k^\alpha(s)v(s)\,\mathrm{d}s
			\right)
			u^{(\alpha)}(t_k).
		\end{aligned}
		\label{weak-adjoint-pairing}
	\end{equation}
	
	Setting $t_0=a$ and $t_{l+1}=b$ and applying
	\eqref{exputil} on each interval $(t_k,t_{k+1})$, we obtain
	\begin{equation}
		\begin{aligned}
			& \quad\int_a^b L_nu(t)v(t)\,\mathrm{d}t
			=
			\sum_{k=0}^{l}
			\int_{t_k}^{t_{k+1}}
			u(t)L_n^*v(t)\,\mathrm{d}t
			\\
			&-
			\sum_{k=1}^{l}\sum_{\alpha=0}^{n-1}
			\sum_{j=\alpha+1}^{n}
			\sum_{r=0}^{j-1-\alpha}
			(-1)^{j-1-\alpha}
			\binom{j-1-\alpha}{r}
			a_{n-j}^{(j-1-\alpha-r)}(t_k)
			\Delta v^{(r)}\big|_{t=t_k}
			u^{(\alpha)}(t_k)
			\\
			&+
			\sum_{\alpha=0}^{n-1}
			\sum_{j=\alpha+1}^{n}
			(-1)^{j-1-\alpha}
			(a_{n-j}v)^{(j-1-\alpha)}(b)
			u^{(\alpha)}(b)
			-
			\sum_{\alpha=0}^{n-1}
			\sum_{j=\alpha+1}^{n}
			(-1)^{j-1-\alpha}
			(a_{n-j}v)^{(j-1-\alpha)}(a)
			u^{(\alpha)}(a).
		\end{aligned}
		\label{weak-adjoint-green}
	\end{equation}
	
	Since $L_n^*v=0$ on each interval $(t_k,t_{k+1})$, the integral
	terms vanish. Moreover, for every interior point
	$t_k\in(a,b)$, the jump conditions in
	Theorem~\ref{adjpiece2} give
	\begin{equation*}
		\begin{aligned}
			\sum_{j=\alpha+1}^{n}
			\sum_{r=0}^{j-1-\alpha}
			(-1)^{j-1-\alpha}
			\binom{j-1-\alpha}{r}
			a_{n-j}^{(j-1-\alpha-r)}(t_k)
			\Delta v^{(r)}\big|_{t=t_k}
			=
			\int_a^b\gamma_k^\alpha(s)v(s)\,\mathrm{d}s,
		\end{aligned}
	\end{equation*}
	for every $\alpha=0,\ldots,n-1$. Therefore, the terms associated
	with the interior points cancel with the corresponding terms
	arising from the Dirac distributions.
	
	If $t_1=a$, or $t_l=b$ the terms
	\[
	\sum_{\alpha=0}^{n-1}
	\left(
	\int_a^b\gamma_1^\alpha(s)v(s)\,\mathrm{d}s
	\right)
	u^{(\alpha)}(a)
	\]
	and
	\[
	\sum_{\alpha=0}^{n-1}
	\left(
	\int_a^b\gamma_l^\alpha(s)v(s)\,\mathrm{d}s
	\right)
	u^{(\alpha)}(b)
	\]
	do not correspond to jump conditions. Instead, they must be combined with the boundary conditions arriving at:
	\begin{equation*}
		\begin{aligned}
			\langle K_n^*v,u\rangle
			={}&
			\sum_{\alpha=0}^{n-1}
			\Bigg[
			\sum_{j=\alpha+1}^{n}
			(-1)^{j-1-\alpha}
			(a_{n-j}v)^{(j-1-\alpha)}(b)
			+
			\int_a^b\gamma_l^\alpha(s)v(s)\,\mathrm{d}s
			\Bigg]
			u^{(\alpha)}(b)
			\\
			&-
			\sum_{\alpha=0}^{n-1}
			\Bigg[
			\sum_{j=\alpha+1}^{n}
			(-1)^{j-1-\alpha}
			(a_{n-j}v)^{(j-1-\alpha)}(a)
			-
			\int_a^b\gamma_1^\alpha(s)v(s)\,\mathrm{d}s
			\Bigg]
			u^{(\alpha)}(a).
		\end{aligned}
	\end{equation*}
	If $t_1\neq a$ or $t_l\neq b$, the corresponding integral term
	in the preceding expression is simply absent.
	
	By the boundary conditions \eqref{adxuns2compc}, the right-hand
	side vanishes for every ${\cal C}^\infty(I)$ function $u$ satisfying
	\[
	V_i(u)=0,
	\qquad i=1,\ldots,n.
	\]
	Consequently,
	\[
	\langle K_n^*v,u\rangle=0
	\]
	for every such function $u$. This proves that every piecewise
	strong solution of
	\eqref{adxuns2compa}--\eqref{adxuns2compc} is a weak solution of
	\eqref{otraformadificil}.
	
	Conversely, assume that
	\[
	\langle K_n^*v,u\rangle=0
	\]
	for every ${\cal C}^\infty(I)$ function $u$  satisfying
	\[
	V_i(u)=0,
	\qquad i=1,\ldots,n.
	\]
	In particular, this identity may be tested with functions
	compactly supported in any one of the open subintervals
	determined by the interior points $t_k$. Such functions satisfy
	the boundary conditions automatically, and hence
	\[
	L_n^*v=0,
	\]
	on every one of these subintervals. This gives
	\eqref{adxuns2compa}.
	
	We may next take functions supported in a sufficiently small
	neighborhood of an interior point $t_k\in(a,b)$. Since their
	derivatives at $t_k$ can be prescribed independently, the
	coefficients of
	\[
	u(t_k),u'(t_k),\ldots,u^{(n-1)}(t_k)
	\]
	must vanish. Therefore,
	\begin{equation*}
		\begin{aligned}
			\sum_{j=\alpha+1}^{n}
			\sum_{r=0}^{j-1-\alpha}
			(-1)^{j-1-\alpha}
			\binom{j-1-\alpha}{r}
			a_{n-j}^{(j-1-\alpha-r)}(t_k)
			\Delta v^{(r)}\big|_{t=t_k}
			=
			\int_a^b\gamma_k^\alpha(s)v(s)\,\mathrm{d}s,
		\end{aligned}
	\end{equation*}
	for every $\alpha=0,\ldots,n-1$. These are precisely the jump
	conditions \eqref{adxuns2compb}.
	
	Once \eqref{adxuns2compa} and the interior jump conditions
	\eqref{adxuns2compb} have been obtained, all the integral terms
	over the subintervals and all the contributions at the interior
	points cancel in the weak identity. Hence, if $t_1=a$ and
	$t_l=b$, the identity reduces to
	\begin{equation*}
		\begin{aligned}
			0
			={}&
			\sum_{\alpha=0}^{n-1}
			\Bigg[
			\sum_{j=\alpha+1}^{n}
			(-1)^{j-1-\alpha}
			(a_{n-j}v)^{(j-1-\alpha)}(b)
			+
			\int_a^b\gamma_l^\alpha(s)v(s)\,\mathrm{d}s
			\Bigg]
			u^{(\alpha)}(b)
			\\
			&-
			\sum_{\alpha=0}^{n-1}
			\Bigg[
			\sum_{j=\alpha+1}^{n}
			(-1)^{j-1-\alpha}
			(a_{n-j}v)^{(j-1-\alpha)}(a)
			-
			\int_a^b\gamma_1^\alpha(s)v(s)\,\mathrm{d}s
			\Bigg]
			u^{(\alpha)}(a)
		\end{aligned}
	\end{equation*}
	for every ${\cal C}^\infty$ function $u$ satisfying
	$V_i(u)=0$, $i=1,\ldots,n$. As before, if $t_1\neq a$ or
	$t_l\neq b$, the corresponding integral term is absent.
	
	By the characterization of the adjoint boundary conditions, the
	last identity is precisely \eqref{adxuns2compc}. Therefore, $v$
	satisfies
	\eqref{adxuns2compa}--\eqref{adxuns2compc}, and the two
	formulations are equivalent.
	\end{proof}

As a corollary of the previous result and taking into account Corollary \ref{adjuntocasosimple}, we also have that:
\begin{corollary}
	The Problem \eqref{adjuntoproblemasimplea}--\eqref{adjuntoproblemasimplec} is equivalent, in the distributional sense, to:
\begin{equation}
	\left\{
	\begin{aligned}
		L_n^* v(t) +\sum_{k=1}^l \delta(t-t_k)\int_a^b \gamma_k(t)v(t) \mathrm{d}t &=0, \quad && t \in I, \\
		V_i^*(v) &= 0, \quad && i = 1, \ldots, n.
	\end{aligned}
	\right.
	\label{otraforma}
\end{equation}
\end{corollary}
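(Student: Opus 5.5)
The corollary is the special case of the preceding proposition in which only the undifferentiated point evaluations are present. We apply that proposition with $\gamma_k^\alpha\equiv 0$ for $\alpha=1,\ldots,n-1$ and $\gamma_k^0=\gamma_k$, $k=1,\ldots,l$. With this choice, the distributional operator \eqref{otraformadificil} reduces term by term to \eqref{otraforma}. Indeed, in the double sum $\sum_{k}\sum_{\alpha}(-1)^\alpha\delta^{(\alpha)}(t-t_k)\int_a^b\gamma_k^\alpha v$ only the $\alpha=0$ summand survives, it has sign $(-1)^0=1$, and it reads $\delta(t-t_k)\int_a^b\gamma_k(s)v(s)\,\mathrm{d}s$. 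The boundary conditions $V_i^*(v)=0$ are unchanged. For \eqref{otraforma}, the proposition therefore says that $v\in PC^{n-1}_{\mathcal T}(I)$ is a weak adjoint solution exactly when it is a piecewise strong solution of \eqref{adxuns2compa}--\eqref{adxuns2compc} with these particular $\gamma_k^\alpha$.

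Next, we identify that piecewise strong problem with \eqref{adjuntoproblemasimplea}--\eqref{adjuntoproblemasimplec} using Corollary \ref{adjuntocasosimple}. The interior equation $L_n^*v=0$ off $\mathcal T$ is the same in both. The boundary relation \eqref{adxuns2compc} reduces literally to \eqref{adjuntoproblemasimplec}, since the endpoint contributions $\sum_\alpha u^{(\alpha)}(a)\int\gamma_1^\alpha v$ keep only $u(a)\int\gamma_1^0 v$, and similarly at $b$.

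The only step that needs care is the jump conditions. We must check that the triangular system \eqref{adxuns2compb}, with zero right-hand side for $\alpha\ge 1$ and right-hand side $\int\gamma_k v$ for $\alpha=0$, is equivalent to the conditions of Corollary \ref{adjuntocasosimple}: $\Delta v^{(r)}|_{t_k}=0$ for $r\le n-2$, together with \eqref{adjuntoproblemasimpleb}. The plan is a descending induction on $\alpha$, using that the leading coefficient is $a_0\equiv 1$. For $\alpha=n-1$ the left-hand side of \eqref{adxuns2compb} is just $a_0\Delta v = \Delta v|_{t_k}$, so $\Delta v=0$. For general $\alpha$, the term with $j=n$ and $r=n-1-\alpha$ is $(-1)^{n-1-\alpha}\Delta v^{(n-1-\alpha)}$, and every other term involves $\Delta v^{(r)}$ with $r<n-1-\alpha$. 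Those lower jumps already vanish by the induction hypothesis. Hence $\Delta v^{(n-1-\alpha)}=0$ for every $\alpha\ge1$, that is, $\Delta v^{(r)}=0$ for $r=0,\ldots,n-2$. At $\alpha=0$ only the top term survives, giving $(-1)^{n-1}\Delta v^{(n-1)}=\int_a^b\gamma_k v$, which is \eqref{adjuntoproblemasimpleb} because $(-1)^{n-1}=(-1)^{n+1}$. The converse direction is immediate, since these vanishing jumps make all the $\alpha\ge1$ equations hold trivially.

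Combining the two identifications gives the claimed equivalence in the distributional sense. If a boundary point $t_1=a$ or $t_l=b$ occurs, no separate argument is needed: the endpoint terms are handled by \eqref{adjuntoproblemasimplec} exactly as in the general proposition.
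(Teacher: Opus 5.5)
Your proposal is correct and follows the paper's route. The paper also obtains this corollary by specializing the preceding equivalence proposition to $\gamma_k^\alpha\equiv 0$ for $\alpha\ge 1$, and then invoking Corollary \ref{adjuntocasosimple} to collapse the jump system \eqref{adxuns2compb}. Your descending induction on $\alpha$, using $a_0\equiv 1$ and the triangular structure of \eqref{adxuns2compb}, spells out explicitly the step that the paper only asserts in the proof of Corollary \ref{adjuntocasosimple} (``necessarily $\Delta v^{(r)}|_{t=t_k}=0$ for $r=0,\ldots,n-2$'').
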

This new formulation of the differential impulsive equations allows us to obtain the Green's function following a similar idea to that of Theorem \ref{teoprincipal}.

For simplicity during computation, we denote
\begin{equation*}
	J_k^\alpha(v)=\int_a^b \gamma_k^\alpha (s)v(s) \mathrm{d}s.
\end{equation*}

With this aim, we present the following result.
\begin{theorem}
	\label{obtergcomplicada}
	Assume that the impulse-free version of Problem 
	 \eqref{adxuns2compa}--\eqref{adxuns2compc} has $\widehat{G}$ as its unique Green's function, and let $\sigma \in \mathcal{L}^1(I)$, $J_k^\alpha: C(I) \rightarrow \mathbb{R}$, $k=1, \ldots, l$, $\alpha=0, \ldots, n-1$ be such that 
	\begin{equation}
		\det(\widehat{A}+I_{ln}) \neq 0
	\end{equation}
	with $I_{ln}$ the identity matrix of order $ln$ and $\widehat{A}=(\widehat{a}_{ij})_{ln \times ln}$ given by
	\begin{equation}
		\widehat{a}_{ij}=J_m^\beta \left(\frac{\partial^{\alpha} \widehat{G}}{\partial s^{\alpha}}(\cdot, t_k) \right)
	\end{equation}
	where $\alpha=(j-1) \mod n$, $k= \lceil \frac{j}{n}\rceil$, $\beta=(i-1) \mod n$ and $m= \lceil \frac{i}{n}\rceil$.
	Then, Problem \eqref{liando} admits a unique solution $u \in PC_{\mathcal{T}}^{n-1}(I)$ and its Green's function $\widehat{H}$ is given by
	\begin{equation}
		\widehat{H}(t,s)=\widehat{G}(t,s)-\sum_{k=1}^{l}\sum_{\alpha=0}^{n-1}\frac{\partial^{\alpha} \widehat{G}}{\partial s^{\alpha}}(t,t_k)\sum_{i=1}^{ln}\widehat{\tilde{a}}_{j,i}\tilde{J}_i(\widehat{G}(\cdot,s))
	\end{equation}
	where $\widehat{\tilde{A}}=(\widehat{\tilde{a_{ij}}})_{ln \times ln}$ denotes the inverse matrix of $\widehat{A}+I_{ln}$, $j=(k-1)n+\alpha+1$ and
	\begin{equation}
		\label{tildej}
		\tilde{J}_i(\widehat{G}(\cdot,s))=J_m^\beta(\widehat{G}(\cdot,s)).
	\end{equation}
	with $\beta=(i-1) \mod n$ and $m= \lceil \frac{i}{n} \rceil$.
\end{theorem}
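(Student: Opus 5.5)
The proof mimics that of Theorem \ref{teoprincipal}, applied to the distributional formulation \eqref{otraformadificil} of the impulsive problem rather than to a problem with continuous functionals. Problem \eqref{liando} is the inhomogeneous version of \eqref{otraformadificil}: the operator $L_n^*v+\sum_{k,\alpha}(-1)^\alpha\delta^{(\alpha)}(t-t_k)J_k^\alpha(v)$ with right-hand side $\sigma$ and boundary conditions $V_i^*(v)=0$. Since $\widehat{G}$ is the unique Green's function of the impulse-free problem, I will represent any solution $v$ by
\begin{equation*}
v(t)=\int_a^b \widehat{G}(t,s)\Big(\sigma(s)-\sum_{k=1}^{l}\sum_{\alpha=0}^{n-1}(-1)^\alpha\delta^{(\alpha)}(s-t_k)J_k^\alpha(v)\Big)\mathrm{d}s .
\end{equation*}
By Definition \ref{defdelta}, the Dirac terms evaluate to $\frac{\partial^\alpha\widehat{G}}{\partial s^\alpha}(t,t_k)$, since the sign $(-1)^\alpha$ cancels the one produced by the distributional derivative. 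This yields
\begin{equation*}
v(t)=\int_a^b\widehat{G}(t,s)\sigma(s)\,\mathrm{d}s-\sum_{k=1}^{l}\sum_{\alpha=0}^{n-1}J_k^\alpha(v)\,\frac{\partial^\alpha \widehat{G}}{\partial s^\alpha}(t,t_k).
\end{equation*}
I will justify this representation by pairing with the Green's function in the weak sense, using the equivalence proposition proved above. The main obstacle is here: it must be checked that $\frac{\partial^\alpha\widehat G}{\partial s^\alpha}(t,t_k)$ makes sense, with $t_k\neq t$ or through one-sided values, and that the resulting function really lies in $PC^{n-1}_{\mathcal T}(I)$. I will handle this using the regularity of $\widehat G$ in its second variable, as in Proposition \ref{caracterizarh} (away from the diagonal it is $C^{n-1}$), and by treating $t=t_k$ as a measure-zero set.

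Next I apply each functional $J_m^\beta$, which is linear and continuous because $\gamma_m^\beta\in\mathcal L^1$, to both sides. This gives the $ln$ equations
\begin{equation*}
J_m^\beta(v)+\sum_{k,\alpha}J_k^\alpha(v)\,J_m^\beta\Big(\frac{\partial^\alpha\widehat G}{\partial s^\alpha}(\cdot,t_k)\Big)=J_m^\beta\Big(\int_a^b\widehat G(\cdot,s)\sigma(s)\,\mathrm{d}s\Big).
\end{equation*}
I index the unknowns $J_k^\alpha(v)$ by $j=(k-1)n+\alpha+1$ and the equations by $i=(m-1)n+\beta+1$, which matches the stated formulas for $\alpha,k,\beta,m$. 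The system then reads $(\widehat A+I_{ln})\mathbf J=\mathbf b$, where $b_i=\int_a^b\tilde J_i(\widehat G(\cdot,s))\sigma(s)\,\mathrm{d}s$; this uses Fubini to commute $J_m^\beta$ with the $s$-integral, which is valid since $\widehat G$ is bounded and $\gamma,\sigma\in\mathcal L^1$. The hypothesis $\det(\widehat A+I_{ln})\neq0$ gives $J_j=\sum_i\widehat{\tilde a}_{j,i}b_i$.

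Substituting back and exchanging the order of summation and integration, as in Theorem \ref{teoprincipal}, gives $v(t)=\int_a^b\widehat H(t,s)\sigma(s)\,\mathrm{d}s$ with $\widehat H$ as stated. Conversely, I will verify that this $v$ solves \eqref{liando}. Applying $L_n^*$ distributionally, each $\frac{\partial^\alpha\widehat G}{\partial s^\alpha}(\cdot,t_k)$ reproduces the Dirac terms, and the linear system guarantees that their coefficients are exactly the $J_k^\alpha(v)$.

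For uniqueness, the difference $w$ of two solutions solves the homogeneous problem. The system then reduces to $(\widehat A+I_{ln})\mathbf J(w)=0$, so all $J_k^\alpha(w)=0$, and the representation with $\sigma=0$ forces $w\equiv0$.
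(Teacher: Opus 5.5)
Your proposal is correct and follows the paper's proof essentially step for step. You represent the solution through $\widehat G$, with the Dirac terms collapsing to $\frac{\partial^{\alpha}\widehat G}{\partial s^{\alpha}}(t,t_k)$; you apply the functionals $J_m^\beta$ to obtain the system $(\widehat A+I_{ln})X=B$ with the same indexing $j=(k-1)n+\alpha+1$; and you invert and substitute back to get $\widehat H$. Your added converse verification and uniqueness argument (modelled on Theorem \ref{teoprincipal}) are not written out in the paper, which only asserts the representation and derives the formula, so your version is slightly more complete.
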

\begin{proof}
	Since $\widehat{G}$ is the unique Green's function of the impulse-free Problem \eqref{adxuns2compa}--\eqref{adxuns2compc}, the solution to Problem \eqref{adxuns2compa}--\eqref{adxuns2compc} is such that
	\begin{equation}
		\begin{aligned}
		u(t)&=\int_a^b \widehat{G}(t,s) \left(\sigma(s)-\sum_{k=1}^{l} \sum_{\alpha=0}^{n-1} (-1)^\alpha \delta^\alpha(s-t_k) J_k^\alpha(u) \right) \mathrm{d}s \\
		&=\int_a^b \widehat{G}(t,s) \sigma(s) \mathrm{d}s-\sum_{k=1}^{l}\sum_{\alpha=0}^{n-1}\frac{\partial^{\alpha} \widehat{G}}{\partial s^{\alpha}}(t,t_k)J_k^\alpha(u).
		\end{aligned}
		\label{inicialdificil}
	\end{equation}
	Applying the linear continuous operators $J_m^\beta$ on both sides of the previous equation, we obtain
	\begin{equation*}
		J_m^\beta(u)=\int_a^b J_m^\beta(\widehat{G}(\cdot,s))\sigma(s) \mathrm{d}s-\sum_{k=1}^{l} \sum_{\alpha=0}^{n-1}J_m^\beta \left( \frac{\partial^{\alpha} \widehat{G}}{\partial s^{\alpha}}(\cdot, t_k)\right)J_k^\alpha(u).
	\end{equation*}
	Therefore, we have
	\begin{equation*}
		\left( \widehat{A}+I_{ln} \right)X=B,
	\end{equation*}
	where $I_{ln}$ is the identity matrix of order $ln$ and $\widehat{A}$ is given by
			\begin{equation*}
		X=
		\begin{pmatrix}
			J_1^0(u) \\
			J_1^1(u) \\
			\vdots \\
			J_1^{n-1}(u) \\
			J_2^0 (u) \\
			\vdots \\
			J_2^{n-1}(u) \\
			\vdots \\
			J_l^{n-1}(u),
		\end{pmatrix},
		\, B=
		\begin{pmatrix}
			\int_{a}^{b}{J_{1}^0(\widehat{G}(\cdot,s)) \sigma(s) \mathrm{d}s} \\
			\int_{a}^{b}{J_{1}^1(\widehat{G}(\cdot,s)) \sigma(s) \mathrm{d}s}  \\
			\vdots \\
			\int_{a}^{b}{J_{1}^{n-1}(\widehat{G}(\cdot,s)) \sigma(s) \mathrm{d}s} \\
			\int_{a}^{b}{J_{2}^0(\widehat{G}(\cdot,s)) \sigma(s) \mathrm{d}s}  \\
			\vdots \\	
			\int_{a}^{b}{J_{2}^{n-1}(\widehat{G}(\cdot,s)) \sigma(s) \mathrm{d}s}  \\
			\vdots \\
			\int_{a}^{b}{J_{l}^{n-1}(\widehat{G}(\cdot,s)) \sigma(s) \mathrm{d}s}
		\end{pmatrix}.
	\end{equation*}
	From previous equality, we obtain that
	\begin{equation*}
		J_k^\alpha(u)=x_j=\sum_{i=1}^{ln}\widehat{\tilde{a}}_{ji}b_i,
	\end{equation*}	
	where $j=(k-1)n+\alpha+1$ and $\widehat{\tilde{a}}_{ij}$ are the elements of the inverse of $\widehat{A}+I_{ln}$.
	
	By substituting into expression \eqref{inicialdificil}, we infer that
	\begin{equation*}
		u(t)=\int_a^b \widehat{G}(t,s) \sigma(s) \mathrm{d}s-\sum_{k=1}^{l}\sum_{\alpha=0}^{n-1}\frac{\partial^{\alpha} \widehat{G}}{\partial s^{\alpha}} (t,t_k)\sum_{i=1}^{ln}{\widehat{\tilde{a}}_{ji}\int_a^b{\tilde{J}_i(\widehat{G}(\cdot,s)) \sigma(s)\mathrm{d}s}},
	\end{equation*}
	where $\tilde{J}_i$ is given by \eqref{tildej}.
	
	Ultimately, we find that
	\begin{equation*}
		\widehat{H}(t,s)=\widehat{G}(t,s)-\sum_{k=1}^{l}\sum_{\alpha=0}^{n-1}\frac{\partial^{\alpha} \widehat{G}}{\partial s^{\alpha}}(t,t_k)\sum_{i=1}^{ln}\widehat{\tilde{a}}_{ji}\tilde{J}_i(\widehat{G}(\cdot,s)).
	\end{equation*}
\end{proof}

As a corollary to the previous result, we have the following statement.

For this purpose, we consider the problem
\begin{equation}
	\left\{
	\begin{aligned}
		\overline{L}_n u(t) &= \sigma(t), \quad && t \in I, \\
		(-1)^{n+1}\Delta u^{(n-1)}\big|_{t=t_k} &=J_k^0(v)=J_k(v), \quad && k=1, \ldots, l,
		\\
		\overline{V}_i(u) &= 0, \quad && i = 1, \ldots, n,
	\end{aligned}
	\right.
	\label{proprin2}
\end{equation}
along with the two-point boundary conditions
\begin{equation}
	\overline{V}_i(u) = \sum_{j=0}^{n-1}{\left(\overline{\alpha}^{i}_{j}u^{(j)}(a)+\overline{\beta}^{i}_{j}u^{(j)}(b) \right)}, \quad i = 1, \ldots, n,
	\label{defv2}
\end{equation}
where
\begin{equation}
	\overline{L}_{n}u(t) \equiv u^{(n)}(t)+\overline{a}_{1}(t)u^{(n-1)}(t)+\cdots+\overline{a}_{n-1}(t)u'(t)+\overline{a}_{n}(t)u(t), \, t \in I,
	\label{defl2}
\end{equation}
being $\overline{\alpha}^{i}_{j}$, $\overline{\beta}^{i}_{j}$, and $\overline{h}_{i}$ real constants for all $i=1, \ldots, n$ and $j=0, \ldots, n-1$, and $\sigma$, $\overline{a}_{k} \in \mathcal{L}^{1}(I)$ for all $k=1, \ldots, n$.
\begin{corollary}
	\label{teoprincipal22}
	Assume that Problem \eqref{proprin2} with $\Delta u^{(n-1)}\big|_{t=t_k}=0$, $k=1, \ldots, l$ has $\overline{G}$ as its unique Green's function and let $\sigma \in \mathcal{L}^{1}(I)$, $J_{k}: C(I) \rightarrow \mathbb{R}$, $k=1, \ldots, l$ be such that
	\begin{equation}
		\det{(\overline{A}+I_{l})} \neq 0
		\label{condiciondet2}
	\end{equation}
	with $I_{l}$, the identity matrix of order $l$ and $\overline{A}=(\overline{a}_{ij})_{l \times l}$ given by
	\begin{equation}
		\overline{a}_{i,j} = J_i(\overline{G}(\cdot,t_j)), \quad i, j \in \{1, \ldots, l\}.
		\label{defa2}
	\end{equation}
	Then, Problem \eqref{proprin2} admits a unique solution $u \in PC_{\mathcal{T}}^{n-1}(I)$, and its Green's function $\overline{H}$ is given by
	\begin{equation}
		\overline{H}(t,s)=\overline{G}(t,s)-\sum_{i=1}^{l}\overline{G}(t,t_i) \sum_{j=1}^{l} \overline{\tilde{a}}_{i,j}J_j(\overline{G}(\cdot,s)),
		\label{ecprincipal2}
	\end{equation}
	with $\overline{\tilde{A}}=(\overline{\tilde{a}}_{ij})_{l \times l}$ denoting the inverse of matrix $\overline{A}+I_l$.
\end{corollary}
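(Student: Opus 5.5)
The plan is to read Corollary \ref{teoprincipal22} as the special case of Theorem \ref{obtergcomplicada} in which only the zeroth-order impulses are present, $\gamma_k^\alpha=0$ for $\alpha=1,\ldots,n-1$. It is cleaner, though, to redo the short argument of Theorem \ref{teoprincipal} directly with point evaluations in place of the functionals $\mathcal{C}_k$.

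\textbf{Step 1 (distributional reformulation).} By Corollary \ref{adjuntocasosimple} and the equivalence proposition preceding it, Problem \eqref{proprin2} can be rewritten in the distributional sense as
\begin{equation*}
\overline{L}_n u+\sum_{k=1}^{l}\delta(t-t_k)\,J_k(u)=\sigma,\qquad \overline{V}_i(u)=0,\quad i=1,\ldots,n.
\end{equation*}
Here the single jump condition on $u^{(n-1)}$ at each $t_k$, including the sign $(-1)^{n+1}$, is encoded by the Dirac term, as in \eqref{otraforma}.

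\textbf{Step 2 (reduction to a linear system).} Since $\overline{G}$ is the Green's function of the impulse-free problem, we get
\begin{equation*}
u(t)=\int_a^b\overline{G}(t,s)\sigma(s)\,\mathrm{d}s-\sum_{i=1}^{l}\overline{G}(t,t_i)\,J_i(u),
\end{equation*}
because integrating $\overline{G}(t,\cdot)$ against $\delta(\cdot-t_i)$ gives $\overline{G}(t,t_i)$. This is \eqref{inicialdificil} with $\alpha=0$ only.

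Next apply the continuous linear functional $J_j$ to both sides, using Fubini to move $J_j$ inside the $s$-integral. This yields
\begin{equation*}
(\overline{A}+I_l)\,(J_1(u),\ldots,J_l(u))^T=\Bigl(\int_a^b J_j(\overline{G}(\cdot,s))\sigma(s)\,\mathrm{d}s\Bigr)_{j=1}^{l},
\end{equation*}
with $\overline{A}$ as in \eqref{defa2}.

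\textbf{Step 3 (solve and substitute).} Condition \eqref{condiciondet2} lets us invert $\overline{A}+I_l$. Substituting the resulting $J_i(u)$ back into the representation and swapping the order of summation and integration, exactly as in Theorem \ref{teoprincipal}, gives $u(t)=\int_a^b\overline{H}(t,s)\sigma(s)\,\mathrm{d}s$ with $\overline{H}$ as in \eqref{ecprincipal2}.

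\textbf{Step 4 (uniqueness).} If $w$ is the difference of two solutions, the system in Step 2 with $\sigma=0$ and invertibility force $J_k(w)=0$ for every $k$. Then $w$ solves the impulse-free homogeneous problem, so $w=0$ by uniqueness of $\overline{G}$. Regularity $u\in PC^{n-1}_{\mathcal{T}}(I)$ follows because $\overline{G}(\cdot,t_i)$ is piecewise $W^{n,1}$ with its only break at $t_i$.

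\textbf{Main obstacle.} The delicate point is Step 1. One must justify that $\overline{G}(\cdot,t_i)$ really solves the impulse-free equation away from $t_i$ and carries the prescribed jump in the $(n-1)$-th derivative at $t_i$. One must also check that the sign convention $(-1)^{n+1}$ matches the unit jump of the Green's function in Proposition \ref{caracterizarh}.

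I would verify this directly. Compute $\overline{L}_n$ of the candidate $u$ on each interval $(t_k,t_{k+1})$, where it equals $\sigma$. Then compute the jump of $u^{(n-1)}$ at $t_k$: it comes only from the term $-\overline{G}(t,t_k)J_k(u)$, whose $(n-1)$-th derivative jumps by $-J_k(u)$. Comparing with the jump condition will fix the sign convention, which may require absorbing a factor $\pm1$ into $J_k$. If the formal delta argument proves shaky, I would rely on this direct verification instead.
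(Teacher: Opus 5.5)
Your route is the paper's. Corollary~\ref{teoprincipal22} is presented there as the $\alpha=0$ case of Theorem~\ref{obtergcomplicada}, i.e.\ the argument of Theorem~\ref{teoprincipal} with the point evaluations produced by Dirac terms, and your Step~4 even supplies the uniqueness argument the paper omits. The gap is in Step~1, at exactly the sign question you listed as the main obstacle and left open. Once that question is settled, the step fails for odd $n$.

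The Dirac formulation \eqref{otraforma}, and the factor $(-1)^{n+1}$ in Corollary~\ref{adjuntocasosimple}, were derived for $L_n^*$, whose leading coefficient is $(-1)^n$. Now take the monic $\overline{L}_n$ of \eqref{defl2} and a function with $\overline{L}_n u=\sigma$ off the $t_k$, whose only singularity at each $t_k$ is a jump of $u^{(n-1)}$. Distributionally it satisfies $\overline{L}_n u=\sigma+\sum_k \Delta u^{(n-1)}\big|_{t=t_k}\delta(t-t_k)$. So the jump condition of \eqref{proprin2} corresponds to
\[
\overline{L}_n u+(-1)^n\sum_{k=1}^{l}\delta(t-t_k)\,J_k(u)=\sigma,
\]
not to your equation. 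Your proposed direct check says the same thing. Since $\overline{G}(\cdot,t_k)$ has a unit jump in its $(n-1)$-th derivative, your representation yields $\Delta u^{(n-1)}\big|_{t=t_k}=-J_k(u)$, which equals the prescribed $(-1)^{n+1}J_k(u)$ only when $n$ is even.

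For odd $n$, \eqref{ecprincipal2} is actually wrong. Take $n=1$, $u'=\sigma$ on $[0,1]$, $u(0)=0$, $t_1=1/2$, $J_1(u)=\int_0^1u$ and $\sigma\equiv1$. Then $\overline{a}_{11}=1/2$ and \eqref{ecprincipal2} gives $u(t)=t-\frac13\chi_{(1/2,1]}(t)$, so $J_1(u)=1/3$ but $\Delta u|_{t=1/2}=-1/3$. ``Absorbing a factor $\pm1$ into $J_k$'' is not a free move, since $J_k$ is fixed by the problem and also defines $\overline{A}$. What your Steps~2--4 really establish, under $\det\bigl(I_l+(-1)^n\overline{A}\bigr)\neq0$, is
\[
\overline{H}(t,s)=\overline{G}(t,s)-(-1)^n\sum_{i=1}^{l}\overline{G}(t,t_i)\sum_{j=1}^{l}b_{ij}\,J_j(\overline{G}(\cdot,s)),\qquad (b_{ij})=\bigl(I_l+(-1)^n\overline{A}\bigr)^{-1}.
\]
This coincides with \eqref{ecprincipal2} exactly for even $n$, or if the leading coefficient were $(-1)^n$, as in the adjoint setting from which the paper imports the jump convention. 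The paper's one-line reduction carries the same slip. So rather than invoke Corollary~\ref{adjuntocasosimple} for Step~1, you should either restrict to even $n$ or prove the corrected formula.
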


Besides using this technique, we could directly compute the Green's function of the impulsive differential equations, taking into account that their adjoint operators are differential equations with piecewise constant arguments. Thus, if we consider Problem \eqref{adxuns2compa}--\eqref{adxuns2compc}, we can move to the corresponding adjoint problem and compute its Green's function following Theorem \ref{teoprincipal}. Finally, by applying Corollary \ref{coroadjoint}, we can recover the Green's function of the original Problem \eqref{adxuns2compa}--\eqref{adxuns2compc}.

Both formulations must be equivalent, and under the hypotheses of Theorem \ref{teoprincipal22}, the resulting Green's function must coincide. It is straightforward to verify that if $H$ is the Green's function of Problem \eqref{adxunscomp} and $\widehat{H}$ is the Green's function of \eqref{adxuns2compa}--\eqref{adxuns2compc}, then the matrix $A+I_l$ related with \eqref{adxunscomp} and the matrix $\widehat{A}+I_{ln}$ related with \eqref{adxuns2compa}--\eqref{adxuns2compc} satisfy $(A+I_{ln})^T=\widehat{A}+I_{ln}$. By simple algebraic manipulations, it is not difficult to verify that, effectively,
\begin{equation}
	\widehat{H}(t,s)=H(s,t)=H^*(t,s).
\end{equation}

\section{Illustrative Examples}

In this section, we present a series of examples illustrating the
usefulness of the results developed above. In particular, we show how
the proposed theoretical framework can be used to address several
problems of interest in the literature, either by deriving their
Green's functions directly or by identifying them as the adjoint
problems of other problems that have already been studied.

\subsection{Impulsive Differential Equation}

As we have mentioned throughout the previous section, Theorem \ref{adjpiece2} related the equations with piecewise constant arguments to the differential impulsive equations, providing a direct method for computing their Green's functions and characterizing their constant-sign region. Next, we present an example.

\subsubsection{Example $u'(t)+mu(-t)+Mu([t])=\sigma(t)$}
\label{ejemploreflection}

In \cite{cabada2025reflection}, we consider the following problem:
\begin{equation}
	\begin{aligned}
		u'(t)+mu(-t)+Mu([t])&= \sigma(t), \quad t \in J:=[-T,T], \\
		u(-T)&=u(T).
	\end{aligned}
	\label{artlargo}
\end{equation}
Here $m$ and $M$ are real constants with the condition that both are not simultaneously zero, $T>0$ and the function $[t]$ is given by \eqref{parteenteira}. Notice that $[t]=0$ for all $t \in (-1,1)$.

Using a similar argument as in the Corollary \ref{adjuntocasosimple} and taking into account Remark \ref{validofuncional}, one can see that the adjoint operator, for non-integer $T$, is given by
\begin{equation}
	\label{artlargoad}
	\begin{aligned}
		-v'(t)-mv(-t)&= \sigma(t), \quad t \in J, \\
		\Delta v([-T])&=-M\int_{-T}^{-[T]}v(t)\mathrm{d}t, \quad \Delta v([T])=-M\int_{[T]}^{T}v(t) \mathrm{d}t,\\
		 \Delta v(0)&=-M \int_{-1}^{1}v(t) \mathrm{d}t, \\ 
		\Delta v(i)&=-M \int_{i}^{i+1}v(t) \mathrm{d}t, \, i=-[-T+1], \ldots -1,1, \ldots [T-1],\\
		v(-T)&=v(T).
	\end{aligned}
\end{equation}

In \cite{cabada2025reflection}, we obtain and characterize the Green's function of Problem \eqref{artlargo}. Therefore, taking into account Corollary \ref{coroadjoint} we can deduce many properties of the Green's function of Problem \eqref{artlargoad}. In particular, we have the following lemma:
\begin{lemma}\cite[Section 5.3.1]{cabada2025reflection}
	Consider the Green's functions of problems \eqref{artlargo} and \eqref{artlargoad} and $T<1$. Then
	\begin{itemize}
		\item If $m \in \left(-\frac{\pi}{4T}, \frac{\pi}{4T}\right)$, $m \neq 0$ and $M \in \left(-m,\frac{1}{2}m(-1+\cot{(mT)}) \right)$ or $m=0$ and $M \in \left(0,\frac{1}{2T} \right)$, they are strictly positive on $J \times J$.
		\item If $m \in \left(-\frac{\pi}{4T}, \frac{\pi}{4T}\right)$, $m \neq 0$ and $M \in \left(-\frac{1}{2}m(1+\cot{(mT)}),-m \right)$ or $m=0$ and $M \in \left(-\frac{1}{2T},0 \right)$, they are strictly negative on $J \times J$.
	\end{itemize}	
\end{lemma}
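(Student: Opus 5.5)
The plan is to treat the lemma as a direct consequence of the sign analysis for the piecewise-constant-argument problem \eqref{artlargo} carried out in \cite{cabada2025reflection}, transferred to the impulsive problem \eqref{artlargoad} through the adjoint relation. Let $H$ denote the Green's function of \eqref{artlargo} and $H^*$ that of \eqref{artlargoad}. The key identity we will use is the adjoint relation
\[
H^*(t,s)=H(s,t),
\]
which follows from Corollary \ref{coroadjoint} extended in the sense of Remark \ref{validofuncional}. Once this identity is established, the sign of $H^*$ on $J\times J$ coincides with the sign of $H$ on $J\times J$, because the swap $(t,s)\mapsto(s,t)$ maps $J\times J$ onto itself.

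First, we verify that \eqref{artlargoad} is indeed the adjoint of \eqref{artlargo}. We repeat the argument of Theorem \ref{adjpiece2} and Corollary \ref{adjuntocasosimple} with $n=1$ and $L_1u=u'+mu(-t)$.
\begin{itemize}
\item The reflection term is handled by the change of variables $t\mapsto -t$ in $\int_{-T}^T u(-t)v(t)\,dt$. This gives $\int u(t)v(-t)\,dt$, so the adjoint principal part is $-v'(t)-mv(-t)$; the boundary term produced by integrating $u'v$ by parts yields the periodic condition $v(-T)=v(T)$.
\item The functional term $Mu([t])$ has the form $\sum_k \gamma_k(t)u(t_k)$, with $t_k$ the integer points and $\gamma_k$ the characteristic functions of Example \ref{exaenteiro}.
\item By \eqref{adjuntoproblemasimpleb} with $n=1$, each such term produces the jump condition $\Delta v|_{t_k}=-M\int \chi_k v$. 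These are exactly the jump conditions listed in \eqref{artlargoad}.
\item Since $T<1$, the only interior integer is $t=0$, with $\gamma_0=M\chi_{(-T,T)}$. The non-integer assumption on $T$ guarantees that no $t_k$ falls on an endpoint, so the boundary conditions remain $V^*(v)=0$.
\end{itemize}

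Second, we invoke uniqueness of the Green's functions. Under the stated parameter ranges, \cite{cabada2025reflection} shows that \eqref{artlargo} has a unique Green's function $H$. Applying the duality pairing argument of Theorem \ref{adjpiece2}, namely $\langle Tw,z\rangle_2=\langle w,T^*z\rangle_2$ with $T$ and $T^*$ the solution operators, we obtain $H^*(t,s)=H(s,t)$. This agrees with the final identity $\widehat H(t,s)=H(s,t)$ stated at the end of the previous section.

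Third, we import the sign results from \cite[Section 5.3.1]{cabada2025reflection}. That section gives strict positivity of $H$ for $m\in(-\frac{\pi}{4T},\frac{\pi}{4T})$, $m\neq 0$ and $M\in(-m,\frac12 m(-1+\cot(mT)))$, and for $m=0$ with $M\in(0,\frac1{2T})$. It gives strict negativity on the symmetric ranges listed in the second item. Transposing these statements yields the claims for $H^*$.

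The main obstacle is justifying the adjoint identity when the reflection term is present, since Corollary \ref{coroadjoint} was stated only for ordinary operators. Remark \ref{validofuncional} requires the $\beta_k$ to be orientation-preserving diffeomorphisms, whereas $t\mapsto -t$ reverses orientation. I would therefore redo the integration by parts by hand: the substitution is an isometry of $\mathcal{L}^2(J)$ onto itself, so the multiplication-by-reflection operator is self-adjoint, and the abstract identity $(T^{-1})^*=(T^*)^{-1}$ applies without any need for orientation preservation.
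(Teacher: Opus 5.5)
Your route is the paper's: import the sign analysis of \eqref{artlargo} from \cite{cabada2025reflection} and transfer it via $H^*(t,s)=H(s,t)$, using that $(t,s)\mapsto(s,t)$ preserves $J\times J$. The transposition step itself, including your handling of the reflection through $(T^{-1})^*=(T^*)^{-1}$, is fine.

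The gap is in your first step, where two sign slips make the check of \eqref{artlargoad} appear to succeed. Your own substitution gives $\int_{-T}^{T}u(-t)v(t)\,\mathrm{d}t=\int_{-T}^{T}u(t)v(-t)\,\mathrm{d}t$, i.e.\ the reflection is self-adjoint, as you note at the end. So the adjoint principal part is $-v'(t)+mv(-t)$, not $-v'(t)-mv(-t)$. Likewise, \eqref{adjuntoproblemasimpleb} with $n=1$ carries the factor $(-1)^{n+1}=1$, so it gives $\Delta v(0)=M\int_{-T}^{T}v(t)\,\mathrm{d}t$. A direct check agrees: for $T<1$ we have $[t]=0$ on $J$, and after integrating $u'v$ by parts on $[-T,0]$ and $[0,T]$, the coefficient of $u(0)$ in $\int_{-T}^{T}\bigl(u'(t)+mu(-t)+Mu(0)\bigr)v(t)\,\mathrm{d}t$ is $M\int_{-T}^{T}v-\Delta v(0)$. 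Hence the system displayed in \eqref{artlargoad} is the adjoint of \eqref{artlargo} with $(m,M)$ replaced by $(-m,-M)$, not of \eqref{artlargo} itself.

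This changes the conclusion. Write $H_{m,M}$ for the Green's function of \eqref{artlargo}. The change $w(t)=u(-t)$, together with $[-t]=-[t]$, gives $H_{m,M}(t,s)=-H_{-m,-M}(-t,-s)$. So the Green's function of the displayed system is $H_{-m,-M}(s,t)=-H_{m,M}(-s,-t)$, whose sign is opposite to that of $H_{m,M}$ on $J\times J$. Concretely, take $m=0$ and $M\in(0,\frac{1}{2T})$. Solving the displayed system directly yields $v(-T)=-\frac{1}{2T}\int_{-T}^{T}\left(\frac{1}{M}+s\right)\sigma(s)\,\mathrm{d}s<0$ for every $\sigma>0$, although the lemma asserts positivity there.

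So your argument proves the lemma for the genuine adjoint $-v'(t)+mv(-t)=\sigma(t)$, $\Delta v(0)=M\int_{-T}^{T}v(t)\,\mathrm{d}t$, $v(-T)=v(T)$. The paper's one-line appeal to Corollary \ref{coroadjoint} does not perform this check either. For \eqref{artlargoad} exactly as printed, ``positive'' and ``negative'' must be interchanged. You need to carry out Step 1 correctly and say which problem your conclusion applies to, rather than asserting that the displayed conditions match.
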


Therefore, we have now characterized the constant-sign region of the Green's function for Problem \eqref{artlargoad}, and we can apply numerous fixed point theory results to study the nonlinear case.

%
%

\subsection{Differential Equations With Nonlocal Boundary Conditions}

Following Theorem \ref{adjpiece2}, it is easy to see that, when computing the adjoint of differential equations with piecewise constant arguments, we can obtain, in addition to impulsive differential equations, ordinary differential equations with nonlocal boundary conditions.

This is useful because it provides an alternative method of that of \cite{cabada2021green} for computing the Green's function for equations with nonlocal boundary conditions, simply by taking into account that $H^{*}(t,s)=H(s,t)$, and it also allows us to deduce properties of the adjoint problem from the original one.

As an example, we shall study the following problem, which has already been analyzed in \cite{cabada2021green}.

\subsubsection{Example $u'(t)+Mu(t)=\sigma(t)$ with nonlocal conditions}
Consider the problem:
\begin{equation}
	\left\{
	\begin{aligned}
		u'(t)+Mu(t) &= \sigma(t), \quad && t \in [0, 1], \\
		u(0)-u(1) &= m \int_0^1{u(s)\mathrm{d}s}.
	\end{aligned}
	\right.
	\label{nolocal}
\end{equation}

Following a procedure similar to that of Corollary \ref{adjuntocasosimple}, it is straightforward to see that the adjoint problem is given by:
\begin{equation}
	\left\{
	\begin{aligned}
		-v'(t)+Mv(t)-mv(0) &= \sigma(t), \quad && t \in [0, 1], \\
		v(0)-v(1) &= 0.
	\end{aligned}
	\right.
	\label{nolocalad}
\end{equation}
Theorem \ref{teoprincipal} allows us to determine the Green's function of Problem \eqref{nolocal} and, by applying Corollary \ref{coroadjoint}, to recover the corresponding Green's function of Problem \eqref{nolocalad}. It is straightforward to verify that this coincides with the Green's function obtained in \cite[Equation (24)]{cabada2021green}.

Consequently, once the Green's function of \eqref{nolocal} is characterized, that of Problem \eqref{nolocalad} is also determined. In particular, we may state the following result.
\begin{lemma}
	Consider the Green's functions of problems \eqref{nolocal} and \eqref{nolocalad}. Then
	\begin{itemize}
		\item If $M>0$ and $m \in \left(\frac{M}{1-e^{M}},M \right)$, they are strictly positive on $I \times I$.
		\item If $M>0$ and $m \in \left(M,\frac{Me^M}{e^M-1} \right)$, they are strictly negative on $I \times I$.
		\item If $M<0$ and $m \in \left(\frac{M}{1-e^M},M \right)$, they are strictly positive on $I \times I$.
		\item If $M<0$ and $m \in \left(M, \frac{Me^M}{e^M-1} \right)$, they are strictly negative on $I \times I$.
	\end{itemize}	
\end{lemma}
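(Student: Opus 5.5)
The plan is to compute the Green's function of the adjoint problem \eqref{nolocalad} in closed form, read off its sign, and transfer the result to \eqref{nolocal}. Problem \eqref{nolocalad} has exactly the form \eqref{propart} with $n=1$, $L_1^*v=-v'+Mv$, periodic condition $v(0)=v(1)$, $\mathcal{C}(v)=v(0)$ and $\gamma\equiv -m$. So Corollary \ref{casosimple} gives its Green's function $H^*$. By Corollary \ref{coroadjoint}, the Green's function of \eqref{nolocal} is $H(t,s)=H^*(s,t)$. Both kernels therefore take the same set of values on $I\times I$, and it suffices to determine the sign of $H^*$.

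\textbf{Step 1.} I would first compute the Green's function $G$ of $-v'+Mv=\sigma$, $v(0)=v(1)$, for $M\neq 0$. Multiplying the equation by $e^{-Mt}$ and integrating, I expect
\[
G(t,s)=\frac{1}{1-e^{-M}}\begin{cases} e^{M(t-s)}, & 0\le t<s\le 1,\\ e^{M(t-s-1)}, & 0\le s<t\le 1,\end{cases}
\]
which I would verify against Proposition \ref{caracterizarh}: the jump in $t=s$ and the periodic boundary condition. Integrating in $r$ gives $\int_0^1 G(t,r)\,\mathrm{d}r=1/M$ for every $t$. Hence the denominator in \eqref{hsimple} is $1-m/M$, which is nonzero exactly when $m\neq M$. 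Formula \eqref{hsimple} then yields
\[
H^*(t,s)=G(t,s)+c\,G(0,s),\qquad c=\frac{m}{M-m},\qquad G(0,s)=\frac{e^{-Ms}}{1-e^{-M}} .
\]

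\textbf{Step 2 (main step).} Take $M>0$, so that $G>0$. For fixed $s$ I need the extrema of $t\mapsto G(t,s)$.
\begin{itemize}
\item The minimum is $\min\{G(0,s),\,G(s^+,s)\}=\frac{e^{-M}}{1-e^{-M}}$, since $G(0,s)=\frac{e^{-Ms}}{1-e^{-M}}\ge \frac{e^{-M}}{1-e^{-M}}$.
\item The supremum is $G(s^-,s)=\frac{1}{1-e^{-M}}$.
\end{itemize}

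For positivity: if $c\ge 0$, then $H^*>0$ trivially. If $c<0$, positivity for all $s$ is equivalent to
\[
c>-\inf_s \frac{e^{-M}}{e^{-Ms}}=-e^{-M}.
\]
Under $m<M$ this rearranges to $m(1-e^{-M})>-Me^{-M}$, that is, $m>\frac{M}{1-e^{M}}$. This gives the first interval $m\in\left(\frac{M}{1-e^{M}},M\right)$.

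For negativity: I need
\[
c<-\sup_s \frac{1}{e^{-Ms}}=-e^{M}.
\]
This forces $m>M$, and then it rearranges to $m<\frac{Me^{M}}{e^{M}-1}$, giving the second interval.

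For $M<0$ the computation is the same. Now $G<0$, so the roles of minimum and maximum are exchanged. Because $\frac{M}{1-e^{M}}$ and $\frac{Me^M}{e^M-1}$ change sides relative to $M$, the same algebra produces the stated intervals.

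\textbf{Step 3 (expected obstacle).} The delicate point is strictness on all of $I\times I$, including the diagonal and the corners $s\in\{0,1\}$. There the relevant extremal ratios are infima or suprema attained only in the limit, or at one-sided values. I would handle this by checking the inequalities directly at these points using the lateral values of $G$. Since the parameter intervals are open, the extremal ratio is never reached with equality, so the sign is strict everywhere.

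Finally, $H(t,s)=H^*(s,t)$ transfers each conclusion to \eqref{nolocal}. As a consistency check, I would compare the resulting $H$ with \cite[Equation (24)]{cabada2021green}.
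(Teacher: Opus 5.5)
Your argument is correct, but it gets the sign information from a different source than the paper does. The paper carries out no sign analysis for this lemma. It notes that the Green's function obtained from Theorem \ref{teoprincipal}, together with $H^*(t,s)=H(s,t)$ from Corollary \ref{coroadjoint}, coincides with \cite[Equation (24)]{cabada2021green}. It then takes the known constant-sign characterization for \eqref{nolocal} from that reference and transfers it to \eqref{nolocalad} by transposition.

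You instead apply Corollary \ref{casosimple} to \eqref{nolocalad}, the problem that is genuinely of the form \eqref{proprin}. This gives the explicit kernel $H^*=G+\frac{m}{M-m}G(0,\cdot)$, and you read off the sign from the one-sided extremal values of $t\mapsto G(t,s)$. Your computations check out:
\begin{itemize}
\item the kernel $G$, the identity $\int_0^1G(t,r)\,\mathrm{d}r=1/M$, and $c=m/(M-m)$;
\item for $M>0$, the conditions $c>-e^{-M}$ and $c<-e^{M}$, which give exactly the two stated intervals;
\item strictness on the closed square, since for $c<0$ one has $e^{-M}+ce^{-Ms}\ge e^{-M}+c>0$ (and similarly at $s=1$ for negativity).
\end{itemize}
The paper's route is shorter but rests entirely on the external result. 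Yours is self-contained, and through the equivalences you state it also shows that the intervals cannot be enlarged.

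One sentence needs correcting. The endpoints do not change sides relative to $M$: in fact $\frac{M}{1-e^M}<M<\frac{Me^M}{e^M-1}$ for every $M\neq 0$. What actually happens for $M<0$ is a double sign reversal:
\begin{itemize}
\item Because $G<0$, the constraint on $c$ reverses. Positivity now requires $c<-e^{-M}$, coming from $s\to 0$, $t=s^+$. Negativity requires $c>-e^{M}$, coming from $s=1$, $t=s^-$.
\item The subsequent division by $1-e^{-M}<0$ (resp. by $1-e^{M}>0$ after multiplying by $M-m<0$) reverses it back. For instance, $c<-e^{-M}$ with $m<M$ becomes $m(1-e^{-M})<-Me^{-M}$, that is, $m>\frac{M}{1-e^M}$.
\end{itemize}
The conclusion is therefore right, but you should write out these two lines rather than justify them by the incorrect ``change sides'' remark.
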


\subsection{First Order Integral Periodic Problem}

This part is devoted to illustrating the applicability of
Theorem~\ref{teoprincipal} and some of the results established in the previous sections. To this end, we analyze a problem whose Green's function, to the best of our knowledge, has not yet been derived in the literature.

Consider the following problem
\begin{equation}
	\left\{
	\begin{aligned}
		u'(t)+m\,u(t)+M\,\int_{0}^{1}{u(s) \mathrm{d}s} &= \sigma(t), \quad && t \in \hat{I} := [0, 1], \\
		u(0)-u(1)&=0,
	\end{aligned}
	\right.
	\label{firstorder}
\end{equation}
with $m$ and $M \in \mathbb{R}$ and $\sigma \in \mathcal{L}^{1}(I)$. 

This is a simple and particular case of a Fredholm integro-differential equation with constant kernel $K(t,s)=1$. Such a formulation can model, for instance, a viscoelastic rod or beam subjected to an external load $\sigma(t)$, where the rate of deformation at any point depends equally on the average deformation along the entire rod, representing a simple form of global memory effect. Similarly, it could describe a population model with uniform interaction, where the growth rate at each location depends on the average population over the domain. Therefore, in the following, we will analyze how this problem can be studied using the framework introduced above.

When we consider the problem with $M=0$:
\begin{equation}
	\left\{
	\begin{aligned}
		u'(t)+m\,u(t) &= \sigma(t), \quad && t \in \hat{I}, \\
		u(0)-u(1)&=0,
	\end{aligned}
	\right.
	\label{firstordersim}
\end{equation}
we observe that $m=0$ is the only eigenvalue of the problem under consideration. In other words, a unique Green's function $G_m$ exists if and only if $m \neq 0$. Furthermore, as shown in \cite{cabada2014greens}, the Green's function corresponding to Problem \eqref{firstordersim} can be explicitly expressed as follows:
\begin{equation}
	G_m(t,s) = 
	\frac{1}{e^m-1}\begin{cases}
		e^{m(s-t+1)}, & 0 \leq s \leq t \leq 1, \\
		e^{m(s-t)}, & 0 < t < s \leq 1.
	\end{cases}
	\label{formulag}
\end{equation}
Then, using Theorem \ref{teoprincipal}, or directly Corollary \ref{casosimple}, we see that when $m \neq 0$ and $m+M \neq 0$, the Green's function of Problem \eqref{firstorder} is given by:
\begin{equation}
	H_{m,M}(t,s) = G_{m}(t,s) -  \int_{0}^{1} G_{m}(l,s)\, \mathrm{d}l  
	\frac{M \int_{0}^{1} G_{m}(t,r)\, \mathrm{d}r}{1 + M \int_{0}^{1} \left( \int_{0}^{1} G_{m}(l,s)\, \mathrm{d}l \right) \mathrm{d}r}.
	\label{hfirst}
\end{equation}
It can be verified that
\begin{equation*}
	\int_0^1{G_m(t,r) \mathrm{d}t}=\int_{0}^{1}{G_{m}(t,r) \mathrm{d}r}=\frac{1}{m} \textup{ for all }s \in [0,1] \textup{ and }m \neq 0,
\end{equation*}
from which we deduce that
\begin{equation}
	H_{m,M}(t,s) = G_{m}(t,s) -  \frac{M}{m(m+M)}.  
	\label{hfirstsim}
\end{equation}

On the other hand, we can deduce the following symmetry property:
\begin{lemma}
	Assume that Problem \eqref{firstorder} has a unique solution and let $H_{m,M}$ be its related Green's function. Then, the following symmetry property holds:
	\begin{equation}
		H_{m,M}(t,s)=-H_{-m,-M}(1-t,1-s) \textup{ for all } (t,s) \in \hat{I} \times \hat{I}.
		\label{simetria}
	\end{equation}
\end{lemma}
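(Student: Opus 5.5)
The plan is a uniqueness argument: build, from the solution of Problem \eqref{firstorder} with parameters $(m,M)$, a solution of the same problem with parameters $(-m,-M)$ via the reflection $t\mapsto 1-t$, and then compare the two integral representations. Fix $\sigma\in\mathcal{L}^1(\hat I)$ and let $u$ be the unique solution of \eqref{firstorder} with parameters $(m,M)$ and datum $\sigma$. I would define
\[
w(t):=-u(1-t), \qquad \tilde\sigma(t):=\sigma(1-t).
\]
Then $w'(t)=u'(1-t)$ and $\int_0^1 w(s)\,\mathrm{d}s=-\int_0^1 u(s)\,\mathrm{d}s$. Hence
\[
w'(t)-m\,w(t)-M\int_0^1 w(s)\,\mathrm{d}s = u'(1-t)+m\,u(1-t)+M\int_0^1 u(s)\,\mathrm{d}s = \sigma(1-t)=\tilde\sigma(t).
\]
The periodic condition is also preserved, since $w(0)-w(1)=-(u(1)-u(0))=0$. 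So $w$ solves \eqref{firstorder} with parameters $(-m,-M)$ and datum $\tilde\sigma$.

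Next I would check that the problem with parameters $(-m,-M)$ is uniquely solvable, so that $H_{-m,-M}$ exists. By Corollary \ref{casosimple} and \eqref{hfirstsim}, for $m\neq0$ unique solvability amounts to $m+M\neq 0$, a condition invariant under $(m,M)\mapsto(-m,-M)$. The case $m=0$ can be handled without any formula. If $z$ solves the homogeneous problem with parameters $(-m,-M)$, then the reflection above, applied in reverse, yields a homogeneous solution for $(m,M)$. That solution must vanish by hypothesis, so $z=0$. This makes the uniqueness transfer purely structural.

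With both problems uniquely solvable, I would write
\[
w(t)=\int_0^1 H_{-m,-M}(t,s)\,\sigma(1-s)\,\mathrm{d}s=\int_0^1 H_{-m,-M}(t,1-r)\,\sigma(r)\,\mathrm{d}r,
\]
and also
\[
w(t)=-u(1-t)=-\int_0^1 H_{m,M}(1-t,r)\,\sigma(r)\,\mathrm{d}r.
\]
Since these agree for every $\sigma\in\mathcal{L}^1$, we get $H_{m,M}(1-t,r)=-H_{-m,-M}(t,1-r)$ for a.e.\ $r$ and every $t$. Renaming variables gives \eqref{simetria}.

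The main obstacle is upgrading this a.e.\ identity to every point of $\hat I\times\hat I$, including the diagonal, where $H$ jumps. I would first try a direct check for $m\neq0$ from the explicit formulas \eqref{formulag} and \eqref{hfirstsim}. The constant term transforms correctly because $-\frac{-M}{(-m)(-m-M)}=\frac{M}{m(m+M)}$, so it suffices to verify $G_m(t,s)=-G_{-m}(1-t,1-s)$ piece by piece. For $s<t$ we have $1-s>1-t$, and $-\frac{e^{-m(t-s)}}{e^{-m}-1}=\frac{e^{m(1-t+s)}}{e^m-1}$, as required; the region $t<s$ is analogous. On the diagonal, the convention in \eqref{formulag} must be matched with the reflected convention. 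For $m=0$, I would use instead the piecewise constancy and right/left continuity of $H$ away from $t=s$, which extends the a.e.\ identity off the diagonal.
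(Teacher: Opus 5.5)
Your argument is essentially the paper's: the paper sets $v(t)=u(1-t)$, which solves the $(-m,-M)$ problem with datum $-\sigma(1-t)$, and compares the two integral representations for every $\sigma$. You only move the sign into $w$, and you additionally justify two steps the paper takes for granted: unique solvability of the reflected problem, and the passage from an a.e.\ identity to a pointwise one. Your caution about the diagonal is warranted, since with the convention of \eqref{formulag} one actually gets $H_{m,M}(t,t)=1-H_{-m,-M}(1-t,1-t)$ (the reflection exchanges the right and left limits at $t=s$), so the identity holds only off the diagonal; also, for $m=0$ the off-diagonal extension should rest on continuity of $H_{0,M}$ away from $t=s$, since $H_{0,M}$ is piecewise affine in $s-t$ rather than piecewise constant.
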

\begin{proof}
	Let $u$ be the unique solution of Problem \eqref{firstorder} given by
	\begin{equation*}
		u(t)=\int_{0}^{1}{H_{m,M}(t,s) \sigma(s) \mathrm{d}s}.
	\end{equation*}
	It is straightforward to verify that $v(t):=u(1-t)$ is the unique solution of problem
	\begin{equation*}
		\left\{
		\begin{aligned}
			v'(t)-m\,v(t)-M\int_{0}^{1}{v(s) \mathrm{d}s} &= -\sigma(1-t), \quad && t \in \hat{I}, \\
			v(0)-v(1)&=0,
		\end{aligned}
		\right.
	\end{equation*}
	thus
	\begin{equation*}
		v(t)=-\int_0^1 H_{-m,-M}(t,s) \sigma(1-s) \mathrm{d}s.
	\end{equation*}
	Moreover,
	\begin{equation*}
		u(1-t)=\int_0^1 H_{m,M}(1-t,s) \sigma(s) \mathrm{d}s=\int_0^1 H_{m,M}(1-t,1-r) \sigma(1-r) \mathrm{d}r,
	\end{equation*}
	for all $\sigma \in \mathcal{L}^1(\hat{I})$.
	Consequently, we arrive at equality \eqref{simetria}.
\end{proof}

Thus, it suffices to analyze the sign of the Green's function $H_{m,M}$ for $M>0$ and $m \neq -M$.

By direct computation, we find that $\int_{0}^{1}{G_{m}(l,s) \mathrm{d}l}=\frac{1}{m}$, from which we finally obtain that
\begin{equation*}
	H_{m,M}(t,s)=G_{m}(t,s)-\frac{M}{m(m+M)}.
\end{equation*}
On the other hand, by taking the limit as $m$ tends to zero in the previous expression, we obtain that
\begin{equation*}
	H_{0,M}(t,s) = 
	\begin{cases}
		-\frac{1}{2}+\frac{1}{M}+s-t, & 0 < t < s \leq 1, \\
		\frac{1}{2}+\frac{1}{M}+s-t, & 0 \leq s \leq t \leq 1,
	\end{cases}
\end{equation*}
from which we deduce that Problem \eqref{firstorder} has a unique Green's function $H_{m,M}$, if and only if, $m+M \neq 0$.

Taking this into account, along with the properties of Green's functions and equations \eqref{formulag} and \eqref{hfirstsim}, the following lemma is deduced.
\begin{lemma}
	\label{derivada}
	Assume that $m+M \neq 0$ and let $H_{m,M}$ be the Green's function of Problem \eqref{firstorder}. Then, 
	\begin{equation*}
		\frac{\partial}{\partial{t}}{H}_{m,M}(t,s)=-m \,H(t,s)-\frac{M}{m+M}=-m\,G_{m}(t,s) \leq 0 \textup{ for all } (t,s) \in \hat{I} \times \hat{I}, \, t \neq s,
	\end{equation*}
	and
	\begin{equation*}
		\frac{\partial}{\partial{s}}{H}_{m,M}(t,s)=m \,H(t,s)+\frac{M}{m+M}=m\,G_{m}(t,s)>0 \textup{ for all } (t,s) \in \hat{I} \times \hat{I}, \, t \neq s.
	\end{equation*}
\end{lemma}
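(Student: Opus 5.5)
The plan is to differentiate the explicit closed form of $H_{m,M}$ directly. By \eqref{hfirstsim}, for $m\neq 0$ and $m+M\neq 0$ we have
\[
H_{m,M}(t,s)=G_m(t,s)-\frac{M}{m(m+M)}.
\]
The subtracted term does not depend on $(t,s)$. Hence, for $t\neq s$, every partial derivative of $H_{m,M}$ equals the corresponding partial derivative of $G_m$, and it suffices to differentiate the two branches of \eqref{formulag}.

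\textbf{Derivatives of $G_m$.} On the region $0\le s<t\le 1$ we have $G_m(t,s)=e^{m(s-t+1)}/(e^m-1)$. On the region $0\le t<s\le 1$ we have $G_m(t,s)=e^{m(s-t)}/(e^m-1)$. In both branches the exponent is $m(s-t)$ plus a constant, so
\[
\partial_t G_m=-m\,G_m, \qquad \partial_s G_m=m\,G_m \qquad \text{for } t\neq s.
\]
Substituting $G_m=H_{m,M}+\frac{M}{m(m+M)}$ gives
\[
-m\,G_m=-m\,H_{m,M}-\frac{M}{m+M}.
\]
Similarly, $m\,G_m=m\,H_{m,M}+\frac{M}{m+M}$. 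These are the two claimed identities. (The letter $H$ in the statement is read as $H_{m,M}$.)

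\textbf{Signs.} The exponential factors are strictly positive. The factor $1/(e^m-1)$ has the same sign as $m$. Therefore $m\,G_m(t,s)=\frac{m}{e^m-1}\,e^{(\cdot)}>0$ for every $m\neq 0$. This yields $\partial_s H_{m,M}=m\,G_m>0$ and $\partial_t H_{m,M}=-m\,G_m<0$, which in particular gives the stated $\le 0$.

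\textbf{The case $m=0$.} This is the only delicate point: $G_0$ is not defined, so the right-hand expressions must be read as limits. I would differentiate the explicit formula for $H_{0,M}$ obtained above. This gives $\partial_t H_{0,M}=-1$ and $\partial_s H_{0,M}=1$. These values agree with $\mp\frac{M}{m+M}\big|_{m=0}=\mp 1$, since the terms $\mp m\,H$ vanish at $m=0$. They also agree with $\lim_{m\to0} m\,G_m(t,s)=1$, because $m/(e^m-1)\to 1$ and the exponential factor tends to $1$. So the identities and the strict signs persist at $m=0$, with $m\,G_m$ understood as its continuous extension. The rest of the argument is routine.
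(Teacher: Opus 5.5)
Your proposal is correct and follows essentially the paper's route: the lemma is read off from the explicit formulas \eqref{formulag} and \eqref{hfirstsim}, because the constant $\frac{M}{m(m+M)}$ does not affect derivatives and both branches of $G_m$ satisfy $\partial_t G_m=-m\,G_m$ and $\partial_s G_m=m\,G_m$. Two points go beyond the paper: you treat $m=0$ explicitly through $H_{0,M}$ and $\lim_{m\to 0} m\,G_m=1$, which the paper leaves implicit, and you note that $\partial_t H_{m,M}<0$ holds strictly, which sharpens the stated $\le 0$.
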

Based on all the above, we present the following lemma, which identifies the points $(t,s) \in \hat{I} \times \hat{I}$ at which the function $H_{m,M}$ attains its minimum when it is positive.
\begin{lemma}
	Assume that $m+M \neq 0$ and let $H_{m,M}$ be the Green's function of Problem \eqref{firstorder}. Then, if $H_{m,M} \geq 0$, the function attains its minimum at $H_{m,M}(s^-,s)$ for every $s \in (0,1]$.
	\label{lemapos}
\end{lemma}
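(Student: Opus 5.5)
The plan is to use the explicit representation \eqref{hfirstsim}, $H_{m,M}(t,s)=G_m(t,s)-\frac{M}{m(m+M)}$ for $m\neq 0$, together with \eqref{formulag}. Since the additive constant does not depend on $(t,s)$, locating the minimum of $H_{m,M}$ reduces to locating the minimum of $G_m$. I expect the hypothesis $H_{m,M}\ge 0$ to play no role in where the minimum sits; it only makes the statement meaningful in the positive regime. I will fix $s\in(0,1]$ and study $t\mapsto H_{m,M}(t,s)$ on the two pieces $[0,s)$ and $[s,1]$.

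First I will use Lemma \ref{derivada}: $\frac{\partial}{\partial t}H_{m,M}(t,s)=-m\,G_m(t,s)$ for $t\neq s$. Because $m$ and $e^m-1$ always have the same sign, $m\,G_m(t,s)>0$, so $H_{m,M}(\cdot,s)$ is strictly decreasing on $[0,s)$ and on $(s,1]$. Hence the infimum over $[0,s)$ is the left limit $H_{m,M}(s^-,s)=\frac{1}{e^m-1}-\frac{M}{m(m+M)}$, and the minimum over $[s,1]$ is attained at $t=1$, where $H_{m,M}(1,s)=\frac{e^{ms}}{e^m-1}-\frac{M}{m(m+M)}$.

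The key comparison is between these two candidates, which amounts to showing $\frac{e^{ms}}{e^m-1}\ge \frac{1}{e^m-1}$. I will split into two cases. If $m>0$, then $e^{ms}\ge 1$ and $e^m-1>0$. If $m<0$, then $e^{ms}\le 1$ and $e^m-1<0$, so dividing reverses the inequality. In both cases $H_{m,M}(1,s)\ge H_{m,M}(s^-,s)$, and therefore $\min_t H_{m,M}(t,s)=H_{m,M}(s^-,s)$. As a consistency check, the jump $H(s^+,s)-H(s^-,s)=1$ from Proposition \ref{caracterizarh} confirms that the right branch starts above the left limit.

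Finally, $H_{m,M}(s^-,s)$ does not depend on $s$, so this value is also the global minimum over $\hat I\times\hat I$. The case $m=0$ I will treat separately using the explicit formula for $H_{0,M}$. For $t<s$ the function $-\tfrac12+\tfrac1M+s-t$ decreases to $-\tfrac12+\tfrac1M$ as $t\to s^-$. For $t\ge s$ the minimum is $-\tfrac12+\tfrac1M+s$, attained at $t=1$, which is larger. I expect the only delicate point to be the sign bookkeeping in the case $m<0$; everything else follows directly from monotonicity.
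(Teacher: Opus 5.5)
Your proof is correct and has the same skeleton as the paper's: strict decrease of $H_{m,M}(\cdot,s)$ on each side of the diagonal via Lemma~\ref{derivada}, then comparison of the right-branch minimum $H_{m,M}(1,s)$ with the left limit $H_{m,M}(s^-,s)$, then constancy of $s\mapsto H_{m,M}(s^-,s)$. The paper gets the comparison structurally, from $H_{m,M}(1,s)=H_{m,M}(0,s)\ge H_{m,M}(s^-,s)$ (periodic condition plus monotonicity), and the constancy from $\frac{\partial}{\partial t}H_{m,M}+\frac{\partial}{\partial s}H_{m,M}=0$; you instead read both off the closed form \eqref{hfirstsim}, and you also treat $m=0$ explicitly, which the paper leaves implicit.
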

\begin{proof}
	By applying Lemma \ref{derivada} and using some of the properties that follow from the fact that $H_{m,M}$ is a Green's function, we have that
	\begin{enumerate}
		\item $\frac{\partial}{\partial{t}}H_{m,M} \leq 0$ for all $(t,s) \in \hat{I} \times \hat{I}$, $t \neq s$.
		\item $H_{m,M}(T,s)=H_{m,M}(-T,s)$ for all $s \in \hat{I}$.
		\item $\lim_{t \rightarrow s^+} H_{m,M}(t,s) \equiv H_{m,M}(s^+,s)=1+\lim_{t \rightarrow s^-} H_{m,M}(t,s)=1+H_{m,M}(s^-,s)$.
	\end{enumerate}
	It follows that when $H_{m,M}$ is positive, its minimum is attained at a point of the form $(s^-,s) \in \hat{I} \times \hat{I}$.
	
	Finally, noting that using Lemma \ref{lemapos} $$\frac{d}{d{s}}H_{m,M}(s^-,s)=\frac{\partial}{\partial{t}}H_{m,M}(s^-,s)+\frac{\partial}{\partial{s}}H_{m,M}(s^-,s)=0,$$ 
	the proof is concluded.
\end{proof}

Therefore, using the previous lemma, we can set $H_{m,M}(s^-,s)$ equal to zero for any $s \in (0,1]$, and solve for $M$ as a function of $m$. This leads us to the following lemma:
\begin{lemma}
	Assume that $m+M \neq 0$ and let $H_{m,M}$ be the Green's function of Problem \eqref{firstorder}. Then, the following properties are fulfilled:
	\begin{enumerate}
		\item $H_{m,M}(t,s)>0$ for all $(t,s) \in \hat{I} \times \hat{I}$, if and only if, $M \in \left(-m, \frac{m^{2}}{-1+e^{m}-m} \right)$.
		\item If $M=\frac{m^{2}}{-1+e^{m}-m}$ then $H_{m,M}(s^-,s)=0$ for all $s \in (0,1]$.
		\item $H_{m,M}(t,s)<0$ for all $(t,s) \in \hat{I} \times \hat{I}$, if and only if, $M \in \left(-\frac{e^{m}m^{2}}{1-e^{m}+m\,e^{m}},-m \right)$.
		\item If $M=-\frac{e^{m}m^{2}}{1-e^{m}+m\,e^{m}}$ then $H_{m,M}(s^+,s)=0$ for all $s \in [0,1)$.
	\end{enumerate}
\end{lemma}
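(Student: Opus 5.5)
The plan is to reduce every statement to a comparison between the range of $G_m$ and the constant $c(m,M):=\frac{M}{m(m+M)}$. This works because, by \eqref{hfirstsim}, $H_{m,M}(t,s)=G_m(t,s)-c(m,M)$ whenever $m\neq 0$ and $m+M\neq 0$.

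\textbf{Range of $G_m$.} From \eqref{formulag}, for $m\neq0$ the numerator $e^{m(s-t+1)}$ (for $s\le t$) or $e^{m(s-t)}$ (for $t<s$) takes all values between $1$ and $e^m$ on $\hat I\times\hat I$. The value $1$ is attained, for instance, at $(t,s)=(1,0)$, and is approached as $t\to s^-$. The value $e^m$ is attained on the diagonal $s\le t$, $t=s$, and as $t\to s^+$. Dividing by $e^m-1$, both for $m>0$ and for $m<0$ (where the denominator is negative), we get
\[
\min G_m=\frac{1}{e^m-1}=G_m(s^-,s),\qquad \max G_m=\frac{e^m}{e^m-1}=G_m(s^+,s).
\]
This agrees with Lemma \ref{lemapos}. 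Consequently $H_{m,M}>0$ on $\hat I\times\hat I$ if and only if $c(m,M)<\frac1{e^m-1}$, and $H_{m,M}<0$ if and only if $c(m,M)>\frac{e^m}{e^m-1}$. Items 2 and 4 follow immediately: $H_{m,M}(s^\mp,s)$ is the constant $\frac{1}{e^m-1}-c$ (respectively $\frac{e^m}{e^m-1}-c$), which vanishes exactly at the stated value of $M$. One checks this by substituting $M$ and simplifying, e.g. $m+M=\frac{m(e^m-1)}{e^m-1-m}$.

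\textbf{Solving the inequalities in $M$.} For fixed $m\ne0$, the function $f(M)=c(m,M)$ satisfies $f'(M)=\frac{1}{(m+M)^2}>0$. Hence $f$ is increasing on each branch $M>-m$ and $M<-m$, with vertical asymptote at $M=-m$ and horizontal limit $\frac1m$ as $M\to\pm\infty$. Therefore $f$ maps $(-m,\infty)$ onto $(-\infty,\frac1m)$ and $(-\infty,-m)$ onto $(\frac1m,\infty)$.

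Since $e^m-1>m$ for $m\neq0$, one compares $\frac1{e^m-1}$ with $\frac1m$. For $m>0$ we have $\frac1{e^m-1}<\frac1m<\frac{e^m}{e^m-1}$, and the same ordering holds for $m<0$ (using $me^m<e^m-1$). So the positivity threshold is crossed only on the branch $M>-m$; solving $f(M)=\frac1{e^m-1}$ gives $M=\frac{m^2}{e^m-1-m}$. By monotonicity the positivity set is exactly $\bigl(-m,\frac{m^2}{-1+e^m-m}\bigr)$. The negativity threshold is reached only on the branch $M<-m$; solving $f(M)=\frac{e^m}{e^m-1}$ gives $M=-\frac{e^m m^2}{1-e^m+me^m}$, and the negativity set is the stated interval ending at $-m$. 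On the wrong branch, $f$ stays on the wrong side of $\frac1m$, so it cannot satisfy the inequality.

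\textbf{The case $m=0$ and the main obstacle.} The formula \eqref{hfirstsim} is not available at $m=0$, so I would use the explicit $H_{0,M}$. Its infimum is $-\frac12+\frac1M$ and its supremum is $\frac12+\frac1M$, which gives $M\in(0,2)$ for positivity and $M\in(-2,0)$ for negativity. These coincide with the limits of the interval endpoints as $m\to0$. The main obstacle is the careful branch and sign bookkeeping for $m<0$ in the two threshold comparisons (the ordering of $\frac1{e^m-1}$, $\frac1m$, $\frac{e^m}{e^m-1}$, and the sign of the denominators $e^m-1-m$ and $1-e^m+me^m$). I would handle this by first establishing the elementary inequalities $e^m-1-m>0$ and $1-e^m+me^m>0$ for $m\neq0$, via convexity of the exponential.
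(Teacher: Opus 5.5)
Your proof is correct. Its core is the same as the paper's. By \eqref{hfirstsim}, $H_{m,M}$ is $G_m$ shifted by the constant $\frac{M}{m(m+M)}$, so its sign is decided by the minimum at $(s^-,s)$ and the maximum at $(s^+,s)$, and one solves for $M$ at these thresholds.

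The routes differ in how these facts are obtained and how items 3 and 4 are handled.

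\textbf{The paper's route.}
\begin{itemize}
\item It locates the minimum qualitatively through Lemma \ref{derivada} and Lemma \ref{lemapos}: the sign of $\partial_t H_{m,M}$, periodicity, and the unit jump on the diagonal.
\item It then sets $H_{m,M}(s^-,s)=0$ to obtain the right endpoint.
\item It gets items 3 and 4 from the reflection identity \eqref{simetria}. This identity exchanges $(m,M)$ with $(-m,-M)$ and turns the positivity threshold $\frac{m^2}{e^{-m}-1+m}$ into $\frac{e^m m^2}{1-e^m+me^m}$.
\end{itemize}

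\textbf{Your route.}
\begin{itemize}
\item You read the range $\bigl[\frac{1}{e^m-1},\frac{e^m}{e^m-1}\bigr]$ of $G_m$ directly off \eqref{formulag}.
\item You treat positivity and negativity the same way, using the monotonicity of $M\mapsto \frac{M}{m(m+M)}$ on each branch $M>-m$ and $M<-m$.
\end{itemize}

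\textbf{What each buys.} Your version needs a bit more bookkeeping, but it actually proves the ``if and only if'', which the paper leaves implicit. It shows why the left endpoint is exactly $-m$, and why the branch $M<-m$ (respectively $M>-m$) can never give a positive (respectively negative) Green's function. It also covers $m=0$, where the stated endpoints only make sense as limits. The paper's route is shorter: the symmetry halves the work, and the localization of the extremum follows the general scheme used for other problems in this family.

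\textbf{One slip to fix.} For $m<0$, the ordering $\frac{1}{m}<\frac{e^m}{e^m-1}$ follows from $me^m>e^m-1$, not from $me^m<e^m-1$ as your parenthetical says. Equivalently, it follows from $1-e^m+me^m>0$, which you state correctly in your last paragraph. As written, the parenthetical would give the reverse ordering.
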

Properties $3$ and $4$ can be derived from the symmetry of $H_{m,M}$ given in Lemma \ref{simetria}.

Figure \ref{region} illustrates the regions in which the function $H_{m,M}$ preserves a constant sign.

\begin{figure}[H]
	\centering
	\includegraphics[width=0.5\textwidth]{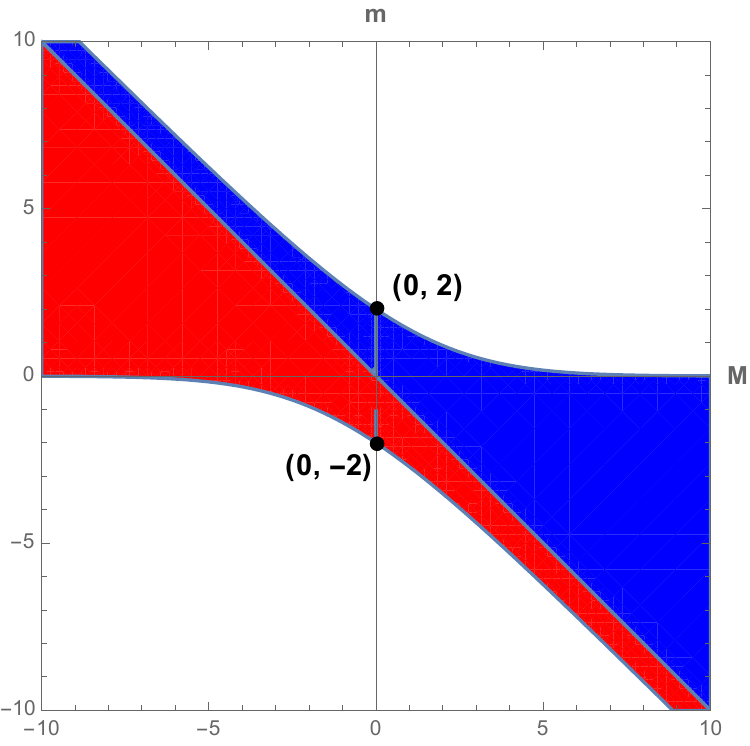}
	\caption{The regions in which the Green's function $H_{m,M}$ for Problem \eqref{firstorder} is positive or negative are depicted in blue and red, respectively.}
	\label{region}
\end{figure}
As a corollary of Lemma \ref{lemapos}, we could also state the following result.

\begin{corollary}
	Assume that $m+M \neq 0$ and let $H_{m,M}$ be the Green's function of Problem \eqref{firstorder}. Then, the following properties are fulfilled:
	\begin{enumerate}
		\item $H_{m,M}(t,s)>0$ for all $(t,s) \in \hat{I} \times \hat{I}$, if and only if, $M \in (-m, \lambda_1)$ where $\lambda_1$ is the first eigenvalue of the final Problem:
		\begin{equation}
			\left\{
			\begin{aligned}
				v'(t)+mv(t)+\lambda_1 \int_0^1{v(s) \mathrm{d}s}&= \sigma(t), \quad t \in \hat{I}, \\
				v(1)&=0.
			\end{aligned}
			\right.
			\label{finalfacil}
		\end{equation}
		\item $H_{m,M}(t,s)<0$ for all $(t,s) \in \hat{I} \times \hat{I}$, if and only if, $M \in (\lambda_2,-m)$ where $\lambda_2$ is the first eigenvalue of the initial Problem:
		\begin{equation}
			\left\{
			\begin{aligned}
				v'(t)+mv(t)+\lambda_2 \int_0^1{v(s) \mathrm{d}s}&=\sigma(t), \quad t \in \hat{I}, \\
				v(0)&=0.
			\end{aligned}
			\right.
			\label{inicialfacil}
		\end{equation}
	\end{enumerate}
\end{corollary}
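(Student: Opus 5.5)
The plan is to combine the previous lemma with an explicit computation of the eigenvalues of Problems \eqref{finalfacil} and \eqref{inicialfacil}. The previous lemma already says that $H_{m,M}>0$ on $\hat I\times\hat I$ if and only if $M\in\bigl(-m,\frac{m^2}{-1+e^m-m}\bigr)$. It also says that $H_{m,M}<0$ if and only if $M\in\bigl(-\frac{e^m m^2}{1-e^m+me^m},-m\bigr)$. So it suffices to show two identities:
\begin{equation*}
\lambda_1=\frac{m^2}{e^m-1-m}, \qquad \lambda_2=-\frac{e^m m^2}{1-e^m+me^m}.
\end{equation*}
Here an \emph{eigenvalue} means a value of the parameter for which the homogeneous problem ($\sigma=0$) has a nontrivial solution. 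Equivalently, in the spirit of Corollary \ref{casosimple}, it is a value where the condition $1+\lambda\int_0^1\mathcal{C}(G(\cdot,s))\,\mathrm{d}s\neq 0$ fails, with $\mathcal{C}(u)=\int_0^1u$ and $G$ the Green's function of $v'+mv$ under the given one-point condition.

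For $\lambda_1$, take $\sigma=0$ and set $c=\int_0^1 v(s)\,\mathrm{d}s$. If $c=0$, then $v'+mv=0$ and $v(1)=0$, so $v\equiv 0$. Hence any nontrivial solution has $c\neq 0$. For $m\neq 0$, solving the linear equation $v'+mv=-\lambda c$ with $v(1)=0$ gives
\begin{equation*}
v(t)=\frac{\lambda c}{m}\bigl(e^{m(1-t)}-1\bigr).
\end{equation*}
Integrating over $[0,1]$ yields the consistency relation
\begin{equation*}
c=\frac{\lambda c}{m}\Bigl(\frac{e^m-1}{m}-1\Bigr).
\end{equation*}
Dividing by $c\neq 0$ gives the unique eigenvalue $\lambda_1=m^2/(e^m-1-m)$. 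The denominator is positive for $m\neq 0$, so $\lambda_1$ is well defined. Since it is the only eigenvalue, it is in particular the first one.

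For $\lambda_2$ the computation is the same with $v(0)=0$. Now
\begin{equation*}
v(t)=\frac{\lambda c}{m}\bigl(e^{-mt}-1\bigr),
\end{equation*}
and the consistency relation gives $\lambda_2=m^2/(1-e^{-m}-m)$. Multiplying numerator and denominator by $e^m$ shows that this equals $-e^m m^2/(1-e^m+me^m)$. Alternatively, $\lambda_2$ follows from $\lambda_1$ through the reflection $t\mapsto 1-t$, as in the symmetry lemma \eqref{simetria}, which sends $(m,\lambda)\mapsto(-m,-\lambda)$.

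The case $m=0$ is handled separately, either directly or by letting $m\to 0$: the same computation gives $\lambda_1=2$ and $\lambda_2=-2$. These agree with the sign of $H_{0,M}$, since by the explicit formula $H_{0,M}(s^-,s)=\frac1M-\frac12$ and $H_{0,M}(s^+,s)=\frac1M+\frac12$.

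The only delicate point is justifying that the eigenvalue found is ``first'' and that no other degenerate value occurs. This is settled by the observation above that any nontrivial solution must have $c\neq0$, which makes the eigenvalue equation linear in $\lambda$ with a single root. Substituting these values into the previous lemma then gives both items.
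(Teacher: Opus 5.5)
Your proposal is correct and follows essentially the paper's route: invoke the preceding lemma and identify its endpoints with the eigenvalues by solving $v'+mv=-\lambda c$ under the one-point condition and imposing $c=\int_0^1 v$. The paper also notes that at $M=\lambda_1$ the function $H_{m,M}(\cdot,1)$ is itself an eigenfunction, since $H_{m,M}(1^-,1)=0$. Your $\lambda_2=-e^m m^2/(1-e^m+me^m)$ has the correct sign, whereas the paper's proof drops the minus sign (inconsistent with the lemma), and your uniqueness argument via $c\neq 0$ and the separate $m=0$ check are useful additions the paper omits.
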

\begin{proof}
	Part $1$ can be verified directly as a consequence of Part $2$ of the previous Lemma. The Green's function $H_{m,M}(t,1)$ satisfies the differential equation by the inherent properties of Green's function. Furthermore, it satisfies $H(1,1)=0$, thereby fulfilling the conditions of Problem \eqref{finalfacil}. Meanwhile, Part $2$ is derived analogously from Part $4$ of the aforementioned Lemma.
	
	It can be verified that the eigenvalue indeed coincides with the values indicated in parts $2$ and $4$ of the previous Lemma through direct calculation.
	
	In the first case, we have:
	\begin{equation*}
		v'(t)+mv(t)=-\lambda_1 \int_0^1v(s)\mathrm{d}s \implies \frac{d}{dt}\left(v(t)e^{mt} \right)=-\lambda_1 \int_0^1 v(s) \mathrm{d}s e^{mt}.
	\end{equation*}
	From this, we deduce that:
	\begin{equation*}
		v(t)=-\frac{\lambda_1 \int_0^1v(s) \mathrm{d}s}{m}+Ke^{-mt}.
	\end{equation*}
	By applying the boundary condition $v(1)=0$ and solving for $\lambda_1$, we conclude that:
	\begin{equation*}
		\lambda_1=\frac{m^2}{-1+e^m-m}.
	\end{equation*}
	Following a virtually analogous procedure, it can be shown that:
	\begin{equation*}
		\lambda_2=\frac{e^mm^2}{1-e^m+me^m}.
	\end{equation*}
\end{proof}

\begin{remark}
	Finally, it is important to note that, the explicit knowledge of the Green's function, together with
	information about its sign, provides a useful tool for studying the
	corresponding nonlinear problem. More precisely, a nonlinear problem
	of the form
	\[
	K_n u(t)=f(t,u(t)),
	\]
	subject to the same boundary conditions, can be rewritten as the
	Hammerstein integral equation
	\[
	u(t)=\int_a^b H(t,s)f(s,u(s))\,\mathrm{d}s.
	\]
	Therefore, under suitable assumptions on the nonlinear term, fixed
	point theory can be used to establish the existence of solutions.
	
	In particular, if the Green's function has a constant sign, the
	associated Hammerstein operator may leave a suitable cone invariant,
	thus allowing fixed point theorems in cones and fixed point index
	techniques to be applied to the study of the existence of positive
	solutions and, under additional assumptions, their multiplicity.
	Furthermore, suitable bounds for the Green's function may provide
	a priori estimates and sufficient conditions for uniqueness.
	Consequently, the analysis carried out above provides the linear
	framework required for the study of the associated nonlinear problem.
	In particular, this approach may be used to develop results analogous
	to those obtained in \cite{cabada2026firstorder}, suitably adapted to the
	nonlinear problem considered here.
\end{remark}

\section*{Declaration of generative AI and AI-assisted technologies in the manuscript preparation process}

During the preparation of this work the authors used ChatGTP and Gemini in order to improve the quality of the English of the paper. After using this tool, the authors reviewed and edited the content as needed and take full responsibility for the content of the published article.

%
%
%

\bibliography{bibno} 
\bibliographystyle{spmpsciper}
\markboth{BIBLIOGRAFÍA}{}

\end{document}